\patchcmd{\section}{\scshape}{\bfseries}{}{}
\renewcommand{\@secnumfont}{\bfseries}
\DeclareRobustCommand{\SkipTocEntry}[5]{}
\newtheorem{introtheorem}{Theorem}
\theoremstyle{definition}
\newtheorem*{introdefinition*}{Definition}
\theoremstyle{plain}
\newcommand\Q{{\mathbf Q}}
\newcommand\Z{{\mathbf Z}}
\newcommand\R{{\mathbf R}}
\newcommand\N{{\mathbf N}}
\newcommand\Orb{{\mathcal O}}
\theoremstyle{plain}
\newtheorem{theorem}{Theorem}[section]
\newtheorem{proposition}[theorem]{Proposition}
\newtheorem{lemma}[theorem]{Lemma}
\newtheorem{corollary}[theorem]{Corollary}
\theoremstyle{definition}
\newtheorem{definition}[theorem]{Definition}
\newtheorem{example}[theorem]{Example}
\newtheorem{remark}[theorem]{Remark}
\newtheorem{question}[theorem]{Question}
\renewcommand{\geq}{\geqslant}
\renewcommand{\leq}{\leqslant}
\begin{document}

\date{\today}
 \title{Substitutive systems and a finitary version of Cobham's theorem}

\author[Jakub Byszewski   \and   Jakub Konieczny \and El\.zbieta Krawczyk ]{Jakub Byszewski   \and   Jakub Konieczny \and El\.zbieta Krawczyk}

\address[JB]{Faculty of Mathematics and Computer Science\\Institute of Mathematics\\
Jagiellonian University\\
Stanis\l{}awa \L{}ojasiewicza 6\\
30-348 Krak\'{o}w}
\email{jakub.byszewski@gmail.com}

\address[JK]{Einstein Institute of Mathematics\\ Edmond J. Safra Campus\\ The Hebrew University of Jerusalem\\ Givat Ram\\ Jerusalem, 9190401\\ Israel}
\email{jakub.konieczny@gmail.com}

\address[EK]{Faculty of Mathematics and Computer Science\\Institute of Mathematics\\
Jagiellonian University\\
Stanis\l{}awa \L{}ojasiewicza 6\\
30-348 Krak\'{o}w}
\email{ela.krawczyk7@gmail.com}

\subjclass[2010]{Primary: 11B85, 37B10, 68R15. Secondary: 37A45, 68Q45}
\keywords{Substitutive sequence,  automatic sequence, morphic word, Cobham's theorem}

\maketitle
\begin{abstract} We study substitutive systems generated by nonprimitive substitutions and show that transitive subsystems of substitutive systems are substitutive. As an application we obtain  a  complete characterisation of the sets of words that can appear as common factors of two automatic sequences defined over multiplicatively independent bases. This generalises the famous theorem of Cobham. 
 \end{abstract}

\section*{Introduction}

Let $\mathcal{A}$ be a finite alphabet, let $\mathcal{A}^*$ be the set of finite words over $\mathcal{A}$ and let $\mathcal{A}^{\omega}$ be the set of sequences $(a_n)_{n\geq 0}$ with values in $\mathcal{A}$. A sequence in $\mathcal{A}^{\omega}$ is called \textit{purely substitutive} if it is a fixed point of some substitution $\varphi\colon \mathcal{A}\rightarrow \mathcal{A}^*$, meaning that the sequence does not change after replacing each letter $a\in\mathcal{A}$ by a finite word $\varphi(a)$. A sequence in $\mathcal{A}^{\omega}$ is called \textit{substitutive} if it arises from a purely substitutive sequence over some alphabet $\mathcal{B}$ after applying a (possibly non-injective) map $\pi\colon \mathcal{B}\rightarrow \mathcal{A}$. We say that a dynamical system $X\subseteq \mathcal{A}^{\omega}$ is  \emph{substitutive}  if it arises as the orbit closure of a  substitutive sequence. Such systems were extensively studied in the context of primitive substitutions \cite{Lothaire-book1,Lothaire-book2,Queffelec-book}, necessarily restricting such studies to minimal systems. There is also a close relationship between substitutive systems and D0L-systems (see, e.g.\ \cite{RS-1}, and compare with \cite{KloudaStarosta-2015}, where the authors solve a problem on D0L-systems that is related to the problems considered below). In the recent years there has been growing interest in the study of nonminimal substitutive systems, e.g.\ with connection to Bratteli diagrams \cite{BKM} and tiling spaces \cite{MR}. 

Nevertheless, it seems that  treatments of substitutive systems arising from nonprimitive substitutions are still scarce. In particular, the following basic question seems not to have been studied: Is every transitive subsystem of a substitutive system substitutive? In other words, if $X$ is a substitutive system and $x$ is a sequence in $X$, is there  a substitutive sequence $y$ such that $x$ and $y$ have the same set of factors? The same question can be posed for $k$-automatic systems (for the precise definitions of substitutive and $k$-automatic systems see Section 1). Note that  substitutive systems may contain uncountably many points, while the number of substitutive sequences is countable, and so most sequences in a substitutive system will often not be substitutive. 

The aim of this paper is twofold. First, we study general substitutive systems and provide a positive answer to the above question. Second, we apply this result to obtain a finitary version of the classical theorem of Cobham, answering a question posed by Shallit\footnote{Jeffrey Shallit, journal entry, 2 February 2018 (private communication)}  (see also the discussion in \cite{MRSS}).

We focus our study on noninvertible substitutive systems, but we briefly present  analogous results for invertible systems as well.  
Noninvertible substitutive systems have a considerably more complicated and interesting dynamical structure than the invertible ones. For instance, it follows from \cite{MR} that the number of subsystems of an invertible substitutive system is finite (see Remark \ref{remark:twosided}), while noninvertible substitutive systems can have infinitely many subsystems (see e.g.\ Example  \ref{infinite}).

Throughout the article, we only consider substitutions that are growing (i.e.\ substitutions $\varphi\colon \mathcal{A} \to \mathcal{A}^*$ such that the  length of the words $\varphi^n(a)$ tends to infinity for all letters $a\in \mathcal{A}$). We do not know what happens when this assumption is removed, though we would not be surprised if it could be shown that Theorem \ref{mainthmA} below continues to hold.

Our first main result yields the following description of transitive subsystems of substitutive systems.

\begin{introtheorem}\label{mainthmA} Every transitive subsystem of a substitutive system is substitutive. Every transitive subsystem of a $k$-automatic system is $k$-automatic.
\end{introtheorem}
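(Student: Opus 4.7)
The plan is to reduce to the purely substitutive case via the coding $\pi$, then iterate desubstitution on the transitive generator until a self-similar structure appears, from which a new substitution generating the subsystem can be synthesised. The $k$-automatic statement will follow by observing that every step of the construction preserves $k$-uniformity (desubstitution uses blocks of constant length $k$, and the refined alphabet and coding add no length distortion).

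Fix a growing substitution $\varphi \colon \mathcal{B} \to \mathcal{B}^*$, a fixed point $x$, and a coding $\pi \colon \mathcal{B} \to \mathcal{A}$ with $X = \overline{\mathrm{orbit}(\pi(x))}$. Given a transitive $Y \subseteq X$ with generator $y$, I would first lift $y$ to some $\tilde y$ in the purely substitutive system $\overline{\mathrm{orbit}(x)} \subseteq \mathcal{B}^\omega$ with $\pi(\tilde y) = y$; if $\overline{\mathrm{orbit}(\tilde y)}$ is substitutive then applying $\pi$ presents $Y$ as the image of a substitutive system under a coding, hence itself substitutive. So I may assume $X$ is purely substitutive and work with $y \in X$. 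Using the recognisability of growing substitutions, I can then iteratively desubstitute: write $y = S^{k_1} \varphi(y^{(1)})$, $y^{(1)} = S^{k_2} \varphi(y^{(2)})$, etc., where $S$ denotes the shift, each $y^{(n)} \in X$, and each $y^{(n)}$ generates a transitive subsystem $Y^{(n)} \subseteq X$ with language $L(Y^{(n)})$.

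The central claim is that the tower $(Y^{(n)})_{n \ge 0}$ is eventually periodic, i.e.\ there exist $m < n$ with $L(Y^{(m)}) = L(Y^{(n)})$. To establish this I would attach to each $y^{(n)}$ a finite invariant — for example, the set of letters of $\mathcal{B}$ appearing in $y^{(n)}$, refined by how these letters distribute across the strongly connected components of the substitution graph of $\varphi$ and by the asymptotic behaviour of $y^{(n)}$ on each such component — and show that this invariant both determines $L(Y^{(n)})$ (because transitivity forces $L(Y^{(n)})$ to be characterised by the recurrent combinatorial data of $y^{(n)}$) and takes only finitely many values, so that pigeonholing gives the desired repeat.

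Once $Y^{(m)} = Y^{(n)}$ with $m < n$, I would synthesise a substitution $\psi$ on a refined alphabet $\mathcal{B}'$, obtained by decorating letters of $\mathcal{B}$ with just enough contextual data that $\varphi$-desubstitution becomes letter-local, so that $\psi$ is essentially $\varphi^{n-m}$ transported through the identification $Y^{(m)} = Y^{(n)}$. A chosen periodic point of $\psi$, under the forgetful coding $\mathcal{B}' \to \mathcal{B}$, gives a substitutive sequence with language $L(Y^{(m)})$; the refinement of $\mathcal{B}$ is essential because $y^{(m)}$ itself need not be a fixed point of any iterate of $\varphi$. Finally, the class of substitutive sequences is closed under codings and under the operation $z \mapsto S^k \varphi(z)$, so unwinding the tower $m$ times produces a substitutive sequence with the same language as $y$, proving $Y$ substitutive. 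The main obstacle is the finiteness claim on the tower: as the introduction notes, nonprimitive substitutive systems can have infinitely many subsystems (Example~\ref{infinite}), so finiteness cannot come from a crude count of subsystems of $X$ and must genuinely exploit that the $Y^{(n)}$ arise from iterated desubstitution of a single transitive sequence. A secondary difficulty is making the alphabet refinement in the synthesis step consistent across all occurrences, so that $\psi$ is an honest substitution rather than a multi-valued relation.
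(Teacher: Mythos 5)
Your overall strategy (reduce to the purely substitutive case, desubstitute the generator repeatedly, look for stabilisation, then synthesise a substitution) is the same skeleton as the paper's argument, but both of your key steps have genuine gaps. First, the finiteness claim. The invariant you propose --- the set of letters of $y^{(n)}$, refined by their distribution over the strongly connected components of the substitution graph and the asymptotics of $y^{(n)}$ on each component --- does \emph{not} determine $\mathcal{L}(Y^{(n)})$. Example \ref{infinite}\eqref{example1} already defeats it: the points $y_{[n,\infty)}=1^{|n|}\cdot(\text{Thue--Morse on }\{2,3\})$ for $n<0$ all share the same letter set, the same SCC data, and the same asymptotic behaviour, yet their orbit closures $Y_n$ have pairwise distinct languages (they are distinguished by the largest $j$ with $1^j\in\mathcal{L}(Y_n)$). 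So pigeonholing on your invariant gives $m<n$ with equal invariants but not $\mathcal{L}(Y^{(m)})=\mathcal{L}(Y^{(n)})$, and the central claim remains unproved. (Also, your desubstitution step should not invoke recognisability: growing substitutions need not be recognisable, cf.\ Remark \ref{remark:twosided}; existence of a desubstitution is obtained by a compactness argument as in Lemma \ref{lem:prefix}, without uniqueness.)

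Second, even granting $\mathcal{L}(Y^{(m)})=\mathcal{L}(Y^{(n)})$, the synthesis step does not go through as described. That equality is an identity of languages, not of points: $y^{(m)}$ and $\varphi^{n-m}(y^{(n)})$ agree only up to a shift and up to having the same language, and from this one cannot in general manufacture a fixed point of a decorated substitution whose orbit closure is $Y^{(m)}$ --- in the Thue--Morse system every point desubstitutes to a point with the same language, yet almost no point is substitutive, so any correct synthesis must at some stage \emph{replace} the generator rather than decorate it. The paper avoids both problems by working at the level of the sequences themselves: after passing to an idempotent power of $\varphi$, it tracks the initial letters $a_i$ of the desubstituted sequences and the lengths of the prefixes $u_i$, uses idempotency of the maps $a\mapsto\varphi(a)_0$ and $\lambda_\varphi$ to force the $a_i$ to become \emph{constant}, and thereby derives an explicit closed form for $y$ (a suffix of $\cdots\tau^2(v)\tau(v)vabw\tau(w)\tau^2(w)\cdots$), whose substitutivity is checked directly by Lemma \ref{lem:morvarphicseq}; the remaining case $Y=X_b$ is handled by exhibiting a different, explicitly substitutive generator of $X_b$ (Proposition \ref{prop:subsystemsXb}). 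To repair your argument you would need to replace the language-level pigeonhole by this kind of pointwise stabilisation, which is exactly where the real work lies.
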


In fact, we obtain a much more precise description of  substitutive (resp., $k$-automatic) sequences generating such subsystems. We present here a simplified version of the result in the noninvertible case.

\begin{introtheorem}\label{mainthmB} Let $x$ be a purely substitutive sequence produced by a substitution $\varphi\colon \mathcal{A}\rightarrow\mathcal{A}^*$, and let $X$ be the orbit closure of $x$. There exists a power $\tau=\varphi^m$ of $\varphi$ and a finite set of words $W\subset \mathcal{A}^*$ such that every transitive subsystem $Y\subset X$ can be generated by a sequence $y\in X$ that is a suffix of a biinfinite sequence of the form
\begin{equation}\label{sequence}
\cdots \tau^2(v)\tau(v)vabw\tau(w) \tau^2(w) \cdots
\end{equation}
for some $v\in W$, $w\in W \setminus \{\epsilon\}$, and $a,b\in \mathcal{A}\cup \{\epsilon\}$.
\end{introtheorem}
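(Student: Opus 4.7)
The plan is to extract, from any point of the transitive subsystem $Y$, a generator whose hierarchical desubstitution structure is fully self-similar, and to read off the claimed biinfinite form directly from this structure. To begin, I would fix a power $\tau = \varphi^m$ with $m$ divisible by $|\mathcal{A}|!$; since $\varphi$ is growing, this ensures enough regularity that iteration of $\tau$ at individual letters has controlled stabilization behaviour (every cycle in the induced letter dynamics closes up, and all transient combinatorics flushes through). I would then define $W$ to be the finite collection of all proper prefixes and proper suffixes of the words $\tau(c)$ for $c \in \mathcal{A}$, together with $\epsilon$. The set $W$ depends only on $\tau$, not on the subsystem $Y$.

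Next, given a transitive subsystem $Y \subseteq X$ and a point $z \in Y$ with dense forward orbit, I would construct a desubstitution tower: points $z = z^{(0)}, z^{(1)}, z^{(2)},\ldots \in X$ and offsets $p_n < \max_{a \in \mathcal{A}} |\tau(a)|$ satisfying $z^{(n)} = \sigma^{p_n}(\tau(z^{(n+1)}))$ for every $n$. Since each pair $(z^{(n)}[0], p_n)$ takes only finitely many values, a pigeonhole / diagonal extraction --- combined with the density of the orbit of $z$ in $Y$, which allows me to replace $z$ by a suitable shift --- should yield a new generator $y \in Y$ of $Y$ whose tower is \emph{eventually self-similar}: there exist $N, c, p$ with $y^{(n)}[0] = c$ and $p_n = p$ for all $n \ge N$.

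Then I would unroll the self-similarity. Decomposing $\tau(c) = L \cdot c \cdot R$ with $|L| = p$, the self-similar identity $y^{(N)} = \sigma^p \tau(y^{(N)})$ forces $y^{(N)} = c \cdot R\, \tau(R)\, \tau^2(R) \cdots$ by a short induction. Setting $v := L$ and $w := R$ (both in $W$), the standard expansion $\tau^N(c) = \tau^{N-1}(L) \cdots \tau(L)\, L \cdot c \cdot R\, \tau(R) \cdots \tau^{N-1}(R)$, together with the recovery formula $y = \sigma^{s_N}\tau^N(y^{(N)})$ for some shift $s_N$ computable from the $p_n$, realises $y$ as a suffix of the biinfinite sequence $\cdots \tau^2(v)\tau(v) v \cdot c \cdot w\tau(w)\tau^2(w) \cdots$, matching the claimed form with ``$ab$'' being the single letter $c$. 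The freedom to let $a,b \in \mathcal{A} \cup \{\epsilon\}$ (up to two letters) absorbs the remaining edge cases: asymmetric stabilization between the left and right sides of the tower, or the degenerate situation $R = \epsilon$ in which the right half of the biinfinite sequence must be read off from a different ancestor letter.

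The main obstacle lies in the second step: plain pigeonhole yields only subsequential stabilization of the tower, whereas the statement requires a single generator $y$ whose tower stabilizes at \emph{all} sufficiently deep levels simultaneously. Producing such a generator --- and simultaneously verifying the orbit-closure equality $\overline{\{\sigma^n y : n \ge 0\}} = Y$, i.e.\ that its language equals the language of $Y$ --- will need a careful diagonal construction in which shifts of $z$ are chosen to align with the stabilized symbolic address at every depth, followed by extraction of a limit point; transitivity of $Y$ is what ensures that the density of the orbit of $z$ is strong enough to allow this alignment. The uniformity of $W$ across all transitive subsystems --- crucial to the statement --- then follows from the initial stabilization of $\tau$, which forces every boundary fragment produced by the tower to lie in the fixed finite pool $W$.
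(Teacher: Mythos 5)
Your overall architecture---the iterated desubstitution tower obtained from Lemma \ref{lem:prefix}, the choice of $W$ as the (proper) prefixes and suffixes of the words $\tau(c)$, and the unrolling of a stabilised tower $y^{(n)}=T^{p}(\tau(y^{(n+1)}))$ into the biinfinite form \eqref{sequence}---matches the skeleton of the paper's argument (Propositions \ref{prop:dichotomy} and \ref{prop:subsystemsXb}). The gap is precisely the step you flag as the main obstacle, and the repair you propose does not go through. First, replacing $z$ by a shift $T^k(z)$ cannot repair a non-stabilising tower: since $k$ is finite and $\tau$ is growing, the tower of $T^k(z)$ agrees with that of $z$ at all sufficiently deep levels up to a bounded ``carry'', so if the pairs $(z^{(n)}_0,p_n)$ do not eventually become constant for $z$, they do not for any shift of $z$ either (for the Thue--Morse substitution, a generic point of the minimal system has exactly this behaviour, and it has a dense orbit). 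Second, if you instead pass to a limit $y=\lim_j T^{k_j}(z)$ with $k_j\to\infty$ to force stabilisation, you lose control of the orbit closure: for a non-minimal transitive $Y$ the set of generating points can be a single orbit, or even essentially a single point (e.g.\ $Y_{-1}=\overline{\Orb(1\varphi^{\omega}(2))}$ in Example \ref{infinite}\eqref{example1}, whose only generator is $1\varphi^{\omega}(2)$ itself, every proper limit of its shifts lying in the Thue--Morse minimal set). Transitivity of $Y$ gives no mechanism for choosing the $k_j$ so that the limit both has a stabilised tower and still generates $Y$; ``alignment with the stabilised symbolic address'' is not something density of the orbit provides.

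What is missing is the dichotomy the paper uses to dissolve this tension. Either $Y$ equals one of the finitely many canonical subsystems $X_{\tau,a}$, in which case one abandons $z$ entirely and writes down an explicit self-similar generator from a decomposition $\tau(a)=vaw$ with $a$ prolongable or very ample (proof of Proposition \ref{prop:subsystemsXb}); or $Y$ is strictly contained in every $X_{\tau,a}$ with $a\in\mathcal{L}(Y)$, and then a rigidity argument shows that the tower of the \emph{given} generator stabilises with no choices to be made: properness of $Y\subsetneq X_a$ forces $y^{(i)}$ to contain letters equivalent to $a$ only at the initial position for large $i$, which pins down $a_i=\lambda(a_{i+1})$, and idempotency of $\lambda$ (this is where you need more than $m=|\mathcal{A}|!$---the paper's conditions \eqref{lem:propofsubs1} and \eqref{lem:propofsubs1'} require iterating maps on $2^{\mathcal{A}}$ and $3^{\mathcal{A}}$) then makes the sequence of letters, and hence the offsets, eventually constant. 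Without this case split and the accompanying rigidity lemma, the stabilisation claim at the heart of your proof is unproven.
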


Substitutive sequences of such a form have been considered before in specific contexts. Let $x$ be a substitutive sequence over an alphabet $\mathcal{A}$. A sequence $z$ in the orbit closure of $x$ is called extremal if it is lexicographically minimal with respect to some total order on the alphabet $\mathcal{A}$. In \cite{CRSZ} it was shown that (under some additional assumptions) all extremal sequences are substitutive. We note here a curious observation that all extremal sequences in (purely) substitutive systems considered in \cite{CRSZ} are of the form \eqref{sequence} (\cite[Lemma 9]{CRSZ}).

In the second part of the article we restrict our attention to automatic sequences. One of the most fundamental results about automatic sequences  is Cobham's theorem, which gives a strong relation between $k$-automaticity of a sequence  and the chosen base $k$. Recall that two integers $k,l\geq 2$ are called \textit{multiplicatively independent} if they are not both powers of the same integer (equivalently, if they are not rational powers of each other).  Cobham's theorem states that a sequence is simultaneously automatic with respect to two multiplicatively independent bases if and only if it is ultimately periodic \cite{Cobham-1969}. This result has sparked a lot of research and has been generalised to a variety of different settings.  An extension of Cobham's theorem to the class of substitutive sequences was obtained by Durand in 2011 \cite{Durand-2011}.

A considerable effort went also into strengthening Cobham's original  theorem. Let $x$ and $y$ be two automatic sequences defined over multiplicatively independent bases. In \cite{Fagnot-1997}, Fagnot showed that for the claim of Cobham's theorem to hold for $x$ and $y$ it is sufficient that they contain the same factors; that is, if the languages $\mathcal{L}(x)$ and $\mathcal{L}(y)$ coincide, then both $x$ and $y$ are ultimately periodic. In \cite{BK} the first- and second-named authors showed that the claim of Cobham's theorem holds if the sequences $x$ and $y$ agree on a set of upper density 1.  In the spirit of Shallit's question, Mol, Rampersad, Shallit and Stipulanti obtained in \cite{MRSS} an explicit bound on the length of a common prefix of $x$ and $y$ that depends on the number of states in the automata generating $x$ and $y$. They further asked for a characterisation of the set $\mathcal{L}(x)\cap \mathcal{L}(y)$  of common factors of  $x$ and $y$. Since all ultimately periodic sequences are $k$-automatic for all $k\geq 2$, it is clear that we cannot hope for a bound on the length of common factors of $x$ and $y$. We might hope, however, that the set of common factors exhibits some simple periodic-like structure. 

In this paper, we show the following finitary version of Cobham's theorem, which provides a  complete characterisation of the sets of words that can appear as common factors of two automatic sequences defined over multiplicatively independent bases. In particular, this set can always be described by a finite amount of data.

\begin{introtheorem}\label{mainthmC} Let $k,l\geq 2$ be multiplicatively independent integers, let $\mathcal{A}$ be an alphabet, and let $U\subset \mathcal{A}^*$. The following conditions are equivalent:
\begin{enumerate}
\item\label{thm:maincobhami-0} there exist a $k$-automatic sequence $x$ and an $l$-automatic sequence $y$ such that $U$ is  the set of common factors of $x$ and $y$;
\item\label{thm:maincobhamii-0}  the set $U$ is a finite nonempty union of sets of the form  $\mathcal{L}(^{\omega}vuw^{\omega})$, where $u,v,w$ are (possibly empty) words over $\mathcal{A}$ and $^{\omega}vuw^{\omega}=\cdots v v v u w w w \cdots.$
\end{enumerate}
\end{introtheorem}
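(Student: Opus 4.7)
The plan is to treat the two implications of Theorem \ref{mainthmC} separately, leveraging Theorems \ref{mainthmA} and \ref{mainthmB} (and their biinfinite analogues for invertible systems, alluded to in the introduction), together with Fagnot's theorem from \cite{Fagnot-1997}.

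For \eqref{thm:maincobhamii-0}$\Rightarrow$\eqref{thm:maincobhami-0}, I would construct $x$ and $y$ explicitly. Each biinfinite pattern ${}^{\omega}v_i u_i w_i^{\omega}$ is ultimately periodic on both sides and hence automatic in every base. The construction glues together nonprimitive substitutions whose transitive subsystems realise precisely the components $\mathcal{L}({}^{\omega}v_i u_i w_i^{\omega})$; the multiplicative independence of $k$ and $l$ is then used, via a Cobham-type argument, to preclude spurious common factors outside $U$.

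For the substantive direction \eqref{thm:maincobhami-0}$\Rightarrow$\eqref{thm:maincobhamii-0}, set $X = \overline{\mathcal{O}(x)}$ and $Y = \overline{\mathcal{O}(y)}$, and pass to their natural biinfinite extensions $\widetilde{X}$ and $\widetilde{Y}$. A transitive biinfinite subsystem $Z \subseteq \widetilde{X} \cap \widetilde{Y}$ is transitive in both biinfinite systems, so by two applications of the biinfinite version of Theorem \ref{mainthmA} it is simultaneously $k$-automatic and $l$-automatic. Fagnot's theorem then forces the biinfinite generators of $Z$ to be two-sidedly ultimately periodic, giving $\mathcal{L}(Z) = \mathcal{L}({}^{\omega}v u w^{\omega})$ for suitable $v, u, w$. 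The biinfinite form of Theorem \ref{mainthmB} applied to $\widetilde{X}$ shows that its transitive biinfinite subsystems are parametrised by finite data drawn from a finite set $W$, so only finitely many $Z_1, \dots, Z_N$ occur inside $\widetilde{X} \cap \widetilde{Y}$, exhibiting $\mathcal{L}(\widetilde{X} \cap \widetilde{Y})$ as a finite union of the required form. A K\"onig's lemma argument on the finitely-branching tree of two-sided extensions identifies $\mathcal{L}(\widetilde{X} \cap \widetilde{Y})$ with those words of $U$ admitting biinfinite extensions within $U$; the residual ``dead-end'' words then contribute the degenerate pieces $\mathcal{L}({}^{\omega}\epsilon u_j \epsilon^{\omega}) = \mathcal{L}(u_j)$, completing the required presentation of $U$.

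The main obstacle I anticipate is showing that the set $U \setminus \mathcal{L}(\widetilde{X} \cap \widetilde{Y})$ of dead-end words is finite. A bare compactness argument only yields that prefixes of arbitrarily long dead-ends converge to sequences in $\mathcal{L}(\widetilde{X} \cap \widetilde{Y})$, which is not in itself contradictory. The intended remedy is to combine this compactness with the finite-state structure of $x$ and $y$: using the substitutions generating them, a pumping-style argument should show that any sufficiently long common factor must extend to a two-sided infinite sequence within $U$, yielding a uniform length bound on dead-ends and closing the proof.
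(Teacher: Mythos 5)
Your plan for the implication \eqref{thm:maincobhami-0}$\Rightarrow$\eqref{thm:maincobhamii-0} has a genuine gap at its central step: the ``dead-end'' set $U\setminus\mathcal{L}(\widetilde X\cap\widetilde Y)$ is in general \emph{infinite}, and the pumping remedy you propose (every sufficiently long common factor extends to a two-sided sequence inside $U$) is provably false. Take $x=y=01^{\omega}$, which is ultimately periodic and hence simultaneously $k$- and $l$-automatic for any bases. Then $U=\mathcal{L}(01^{\omega})$, but since no letter ever precedes $0$ in a factor of $x$, no biinfinite sequence with all factors in $U$ contains $0$; the two-sided intersection reduces to the single constant sequence $\cdots 111\cdots$, and the dead-end set is $\{01^{n}\mid n\geq 0\}$. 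This set is infinite and is itself an entire component $\mathcal{L}({}^{\omega}\epsilon\, 0\, 1^{\omega})$ of the required decomposition, not a finite residue of degenerate pieces $\mathcal{L}(u_j)$. The one-sided periodic tails carry essential infinite structure that the passage to two-sided systems discards, so the reduction does not simplify the problem. (A smaller remark on the easy direction: the paper avoids any Cobham-type argument there by adjoining two \emph{distinct} separator symbols $\clubsuit$ and $\spadesuit$ to the alphabet, one for each sequence, which kills spurious common factors outright; multiplicative independence is not used in that direction.)

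More fundamentally, your outline has no mechanism through which multiplicative independence acts on these non-extendable common factors --- it enters only via Fagnot's theorem, which governs the two-sided part. The paper's proof hinges on an arithmetic ingredient absent from your proposal: Theorem \ref{prop:recurrence} shows that for a $k$-automatic $x$ and words $u,v,w$ with $v$ not a suffix and $w$ not a prefix of any power of $u$, the set $\{n\mid vu^nw\in\mathcal{L}(x)\}$ is a finite union of sets of the form $\{ak^{mn}+b\mid n\geq 0\}$, and Corollary \ref{cor:commonfactors} combines this with Mahler's finiteness theorem for $S$-unit equations to conclude that the set of $n$ for which $vu^nw$ is a \emph{common} factor is finite. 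Without something of this kind, nothing in your argument excludes, say, $\{n\mid vu^n\in U\}$ being a sparse set such as $\{k^j\mid j\geq 0\}$, which would already contradict condition \eqref{thm:maincobhamii-0}. The paper then finishes purely one-sidedly: each common factor is written in a canonical form $v_0u_1^{n_1}v_1\cdots u_s^{n_s}v_s$ over the finitely many primitive cyclic common factors, Corollary \ref{cor:commonfactors} bounds the interior exponents, a compactness argument together with Corollary \ref{cor:cobpoints} bounds the rank $s$, and a case analysis on the two surviving exponents yields the decomposition. Even if you retained your two-sided framework for the recurrent part, you would need essentially all of this machinery to handle the dead-end words, at which point the two-sided reduction buys nothing.
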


Note that Cobham's theorem follows immediately from Theorem \ref{mainthmC}. One of the crucial ingredients in the proof of Theorem \ref{mainthmC} is Theorem \ref{mainthmA} applied to $k$-automatic systems.  Indeed, as a simple application of Theorem \ref{mainthmA} and Fagnot's result we can already obtain the following generalisation: if $\mathcal{L}(z)\subset \mathcal{L}(x)\cap\mathcal{L}(y)$ for some $k$-automatic sequence $x$ and $l$-automatic sequence $y$, then $z$ is ultimately periodic (see Corollary \ref{cor:cobpoints}).

Cobham's original result can be understood in the framework of recognisability of subsets of integers in base-$k$ numeration systems. In this context, Cobham's theorem has been seen to hold in many nonstandard numeration systems over the integers as well as in the higher dimensional setting over $\mathbf{N}^d$, the latter due to Semenov  \cite{Semenov-1977}.  For a comprehensive treatment of these developments  and the interplay between substitutions, numeration systems and  logic, see the surveys \cite{BHMV} and \cite{DR-2011}.

In the setting of recognisable subsets of $\mathbf{R}^d$, an analogue of Cobham's theorem for integer-based numeration systems was obtained by Boigelot, Brusten and Leroux \cite{BB-2009, BBL-2009} with recognisability being defined with respect to (weak) B{\"u}chi automata (for a weaker result for general B{\"u}chi automata, see \cite{BBB-coprime}).  The one-dimensional case was obtained independently by Adamczewski and Bell \cite{AB-2011}, although it was framed in a different language inspired by the kernel-based definition of automaticity. The two approaches were thoroughly linked in \cite{CLR-2015}, providing further connections with (graph directed) iterated function systems and Cobham-like theorems for iterated function systems obtained in \cite{FW-2009} and \cite{EKM-2010}. For more about these developments we refer to \cite{CLR-2015} and references therein.   

In another direction, Cobham's theorem proved amenable to various algebraic extensions, in part thanks to the characterisation of automaticity in terms of algebraicity of power series over $\mathbf{F}_p$ obtained by Christol \cite{Christol}. Cobham's theorem has been successfully generalised to the class of regular  sequences \cite{Bell-2005}, quasi-automatic functions \cite{AB-2008} (introduced by Kedlaya in \cite{Kedlaya} in order to give a description of the algebraic closure of the field  $\mathbf{F}_q(t)$) and Mahler functions (over fields of characteristic 0) \cite{AB-2013}. All these proofs made use of Cobham's original theorem. A much simpler proof of Cobham's theorem for Mahler functions that does not rely on the original result of Cobham has been obtained by Sch\"afke and Singer \cite{SS-2017}.

It would be interesting to see if the finitary version of Cobham's theorem can be generalised to any of these wider settings. It is easy to see that we cannot hope for a straightforward extension to the class of substitutive sequences since we can construct two non-ultimately periodic substitutive sequences $x$ and $y$ over multiplicatively independent bases such that $\mathcal{L}(x) \subset \mathcal{L}(y)$ (see Remark \ref{remark:substitutive} for more details).
 It is, however, reasonable to hope that some extension of Theorem \ref{mainthmC} holds in a higher dimensional setting and can lead to a generalisation of the Cobham--Semenov theorem. A possible approach could involve extending Theorem \ref{mainthmA} to automatic systems over $\mathbf{N}^d$.

We briefly discuss the contents of the paper. In the first section, we recap some basic facts about (topological) dynamical systems and substitutions, and introduce the class of substitutive systems that we will work with. In Definition \ref{def:idemsub} we introduce the notion of an idempotent substitution and show that for every substitution $\varphi$ some power $\varphi^n$ is idempotent. Idempotency gathers all the technical assumptions that we need from the substitution in order to carry out the proofs in Section 2. In Lemma \ref{lem:morvarphicseq} we  show that certain sequences that will turn out to be closely linked with sequences generating  transitive subsystems of substitutive (resp., $k$-automatic) systems are indeed  substitutive (resp., $k$-automatic).

The second section is devoted to the classification of minimal and transitive subsystems of substitutive systems, and contains the proofs of Theorems \ref{mainthmA} and \ref{mainthmB}. The analogues of Theorems \ref{mainthmA} and \ref{mainthmB} for invertible substitutive systems are presented at the end of the section.

The proof of Theorem \ref{mainthmC} is given in the third section. The main ingredients in the proof are Theorem \ref{mainthmA} and Theorem \ref{prop:recurrence}, which describes occurrences of cyclic factors in automatic sequences. We also discuss the problem of effective computability of the set of common factors, that is, existence of an algorithm that, given as  input two automatic sequences $x,y\in \mathcal{A}^{\omega}$ defined over multiplicatively independent bases, returns a finite set of triples of words $(v,u,w)\in (\mathcal{A}^*)^3$ that describe the set of common factors $\mathcal{L}(x)\cap \mathcal{L}(y)$ in the sense of Theorem \ref{mainthmC}. Our proof of Theorem \ref{mainthmC} uses  the compactness of the space $\mathcal{A}^{\omega}$  and is not effective. We believe that the question of whether Theorem \ref{mainthmC} admits an effective proof is interesting and worthy of further study.

\section*{Acknowledgements}
This research was supported by National Science Centre, Poland grant
number 2018/29/B/ST1/01340 (Jakub Byszewski), ERC grant ErgComNum 682150 (Jakub Konieczny) and National Science Centre, Poland grant
number 2012/07/E/ST1/00185 (El\.zbieta Krawczyk).
The authors are very grateful to Jeffrey Shallit, who suggested this line of research, as well as to Dominik Kwietniak, Clemens M\"ullner and Tamar Ziegler. We also wish to thank the anonymous referees for their detailed comments and suggestions, and in particular for pointing out the reference \cite{AS} and suggesting the streamlined proof of Lemma \ref{lem:morvarphicseq}\eqref{lem:morvarphicseq2}.

\section{Preliminaries}\label{sec:preliminaries}
	
In this section we recall some classical definitions and state a few preliminary lemmas.

\subsection*{Symbolic dynamics}

\textit{A (topological) dynamical system} is a compact metric space $X$ together with a continuous map $T\colon X\rightarrow X$. Let $T^n$ denote the $n$-th iterate of $T$, and let $\Orb(x)=\{T^n(x)\mid n\geq 0\}$ denote the \emph{orbit} of a point $x\in X$. A point $x\in X$ is \textit{periodic} if $T^k(x)=x$ for some $k\geq 1$. A point $x\in X$ is \textit{ultimately periodic} if there exists $m\geq 0$ such that $T^m(x)$ is periodic. A \textit{subsystem} of $X$ is a closed subset of $X$ that is invariant under the map $T$, i.e.\ a closed set $X' \subset X$ with $T(X')\subset X'$. A system $X$ is called \textit{minimal} if $X \neq \emptyset$ and if $X$ has no subsystems other than $\emptyset$ and $X$; equivalently, a system $X\neq \emptyset$ is minimal if the orbit of every point is dense in $X$ \cite[Ex.\ 4.2.1.a]{EinsiedlerWard-book}. A system $X$ is called \textit{transitive} if it has a point with a dense orbit. An easy application of Zorn's lemma shows that every dynamical system has a  minimal subsystem  \cite[Ex.\ 4.2.1.c]{EinsiedlerWard-book}. We say that a dynamical system $(Y,S)$ is a \textit{(topological) factor} of the system $(X,T)$ if there exists a continuous surjective map $\pi\colon X\rightarrow Y$  such that $\pi\circ T=S\circ \pi$. Such a map $\pi$ is called \textit{a factor map}. We will need the following simple fact.

\begin{lemma}\label{lem:factor}
Let $X$ and $Y$ be dynamical systems and let $\pi \colon X \to Y$ be a factor map. Let $Y'$ be a minimal subsystem of $Y$. Then there exists a minimal subsystem $X'$ of $X$ such that $\pi(X')=Y'$.
\end{lemma}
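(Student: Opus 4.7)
My plan is to build $X'$ by taking a minimal subsystem of the preimage $\pi^{-1}(Y')$ and then using minimality of $Y'$ to force the image to be all of $Y'$.

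First, I would check that $Z := \pi^{-1}(Y')$ is itself a (nonempty) subsystem of $X$. It is closed by continuity of $\pi$ and nonempty by surjectivity, and it is $T$-invariant because the factor relation $\pi \circ T = S \circ \pi$ combined with the $S$-invariance of $Y'$ gives
\[
\pi(T(Z)) = S(\pi(Z)) \subseteq S(Y') \subseteq Y',
\]
so $T(Z)\subseteq Z$. Thus $(Z,T|_Z)$ is a dynamical system in its own right, and the existence result for minimal subsystems (Zorn's lemma applied to the family of nonempty closed $T$-invariant subsets of $Z$, as cited in the paper) yields a minimal subsystem $X' \subseteq Z$.

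Next, I would observe that $X'$ is automatically a minimal subsystem of the ambient $X$: any $T$-invariant closed subset of $X'$ is a fortiori contained in $Z$, so by minimality of $X'$ inside $Z$ it is either empty or all of $X'$. Finally, to pin down $\pi(X') = Y'$, I would note that $\pi(X')$ is compact (continuous image of a compact set) and hence closed in $Y$, and it is $S$-invariant because
\[
S(\pi(X')) = \pi(T(X')) \subseteq \pi(X').
\]
It is also nonempty and contained in $Y'$, so $\pi(X')$ is a nonempty subsystem of $Y'$. Minimality of $Y'$ forces $\pi(X') = Y'$, completing the argument.

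There is no real obstacle here; the only mildly delicate point is keeping straight that ``minimal inside $Z$'' upgrades to ``minimal inside $X$,'' which works precisely because $X' \subseteq Z$ and any $X$-subsystem of $X'$ automatically lies in $Z$. Everything else is a routine verification of closedness and invariance under the factor relation.
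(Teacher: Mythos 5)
Your argument is correct and follows exactly the paper's own route: pass to the preimage $\pi^{-1}(Y')$, extract a minimal subsystem, and use minimality of $Y'$ to conclude that the image is all of $Y'$. You simply spell out the routine verifications (closedness, invariance, nonemptiness) that the paper leaves implicit.
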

\begin{proof}
Let $X''\subset X$ be the preimage of $Y'$ by the map $\pi$. Clearly, $X''$ is a subsystem of $X$. Let $X'$ be some minimal subsystem of $X''$. Then $\pi(X')$ is a subsystem of $Y'$, and since $Y'$ is minimal, $\pi(X')=Y'$.
\end{proof}

In this paper we are interested in dynamical systems coming from substitutive sequences. Let $\mathcal{A}$ be a finite set (called an \emph{alphabet}).  Let $\mathcal{A}^*$ denote the set of finite words over $\mathcal{A}$. This is a monoid under concatenation. The empty word is denoted by $\epsilon$. We say that a word $w$ is a \emph{factor} of a word $v$ or that $w$ \textit{appears} in $v$ if $v=ywz$ for some words $y$ and $z$. A word $w$ is a \textit{prefix} of a word $v$  if $v = wz$ for some word $z$. A prefix $w$ is \textit{proper} if $w\neq v$. We similarly define a \textit{suffix}. For a word $w$ we let $|w|$ denote the length of $w$.

Let $\mathcal{A}^{\omega}$ denote the set of sequences over $\mathcal{A}$.   For a sequence $x$   and integers $i\leq j$ we write  $x_{[i,\,j-1]}$ for the word $x_ix_{i+1}\cdots x_{j-1}$ and $x_{[i,\,\infty)}$ for the sequence $x_ix_{i+1}\cdots$. (In particular, $x_{[i,\,i-1]}=\epsilon$.) The notions of concatenation, factor, prefix and suffix are used for words and sequences as long as they make obvious sense. For a word $w\neq \epsilon$ let $w^{\omega}$ denote the sequence $w^{\omega}=w w w \cdots$, and we put $\epsilon^{\omega}=\epsilon$. While we will always use the notation $x_n$ for the terms of a sequence $x=(x_n)_{n\geq 0}\in\mathcal{A}^{\omega}$, we regard words themselves as not indexed.

The set $\mathcal{A}^{\omega}$ with the product topology (where we use discrete topology on each copy of $\mathcal{A}$) is a compact metrisable space. We define the shift map $T\colon \mathcal{A}^{\omega}\rightarrow \mathcal{A}^{\omega}$ by $T((x_n)_n)= (x_{n+1})_n$. The space $\mathcal{A}^{\omega}$ together with the shift map $T$ is a dynamical system. We refer to subsystems $X$ of $\mathcal{A}^{\omega}$ as \textit{subshifts}. Let $\mathcal{L}(X)$ denote the \textit{language} of the subshift $X$, i.e.\ the set of all finite words that appear in some $x\in X$. A subshift $X$  is uniquely determined by its language since \[X=\{x\in \mathcal{A}^{\omega} \mid \text{ all factors of } x \text{ are in } \mathcal{L}(X)\}.\] We also use $\mathcal{L}(y)$ to denote the set of factors of a word or a sequence $y$.  By a slight abuse of terminology we say that a sequence of words $w_n$ converges to the sequence $x$ if $|w_n|\to \infty$ and for every $m\geq 0$ the prefixes of $x$ and $w_n$ of length $m$ agree for sufficiently large $n$.

We will occasionally also work with backwards infinite sequences in ${}^{\omega}\!\mathcal{A}$ and biinfinite sequences  in ${}^{\omega}\!\mathcal{A}^{\omega}$. The definitions of factor, prefix, suffix and language  generalise to these cases in a straightforward manner. We always regard sequences $(a_n)_n$ in $\mathcal{A}^{\omega}$ as indexed by $n\in\{0,1,\ldots\}$, backwards infinite sequences in ${}^{\omega}\!\mathcal{A}$ as indexed by $n\in\{\ldots,-2,-1\}$, and biinfinite sequences in ${}^{\omega}\!\mathcal{A}^{\omega}$ as indexed by $n\in \{\ldots,-2,-1,0,1,\ldots\}$.

\subsection*{Substitutive sequences}

 Let $\mathcal{A}$ be an alphabet. A \textit{substitution} is a map $\varphi\colon \mathcal{A}\rightarrow \mathcal{A}^*$ that assigns to each letter $a$ some finite word $w$ in $\mathcal{A}^*$. We only consider substitutions that are \textit{growing}, i.e.\  $|\varphi^n(a)|\rightarrow \infty$ as $n\rightarrow \infty$ for each $a\in\mathcal{A}$, and throughout the paper the term `substitution' is used for a growing substitution. A substitution $\varphi$ is called \textit{primitive} if there exists an integer $n \geq 1$ such that for any $a,b\in \mathcal{A}$ the letter $a$ appears in $\varphi^n(b)$. A letter $a\in \mathcal{A}$ is \textit{prolongable} if $a$ is the initial letter of $\varphi(a)$. A letter $a\in \mathcal{A}$ is \textit{backwards prolongable} if $a$ is the final letter of $\varphi(a)$. If $a$ is prolongable (resp., backwards prolongable), then the sequence $\varphi^n(a)$ converges to a sequence in $\mathcal{A}^{\omega}$ (resp., in ${}^{\omega}\!\mathcal{A})$, denoted by $\varphi^{\omega}(a)$ (resp., ${}^{\omega}\!\varphi(a)$). A \textit{coding} is an arbitrary map $\pi\colon \mathcal{A}\rightarrow \mathcal{B}$ between alphabets $\mathcal{A}$ and $\mathcal{B}$. A surjective coding $\pi$ naturally extends to a factor map $\pi\colon\mathcal{A}^{\omega}\rightarrow \mathcal{B}^{\omega}$  between dynamical systems.

A substitution $\varphi\colon \mathcal{A}\rightarrow \mathcal{A}^*$ induces a natural map $\varphi\colon \mathcal{A}^{\omega} \to \mathcal{A}^{\omega}$, denoted by the same letter. We say that a sequence $x$ is \textit{purely substitutive}  if it is a fixed point of some substitution $\varphi$, i.e.\ $\varphi(x)=x$. In this case we  also say that the sequence $x$ is \emph{produced} by the substitution $\varphi$. Sequences produced by a substitution $\varphi$ are exactly of the form $\varphi^{\omega}(a)$ for a prolongable letter $a$. A \textit{substitutive} sequence is the image of a purely substitutive sequence under a coding. 

We say that a substitution $\varphi\colon \mathcal{A}\rightarrow \mathcal{A}^*$ is \textit{of constant length $k$} if $|\varphi(a)|=k$ for each $a\in \mathcal{A}$. A fixed point of a substitution of constant length $k$ is called a \textit{purely}  $k$-\textit{automatic sequence}. A $k$-\textit{automatic} sequence is the image of a purely $k$-automatic sequence under a coding. The classes of substitutive and $k$-automatic sequences are invariant under changing finitely many terms of a sequence and under the forward and backward shift operations  \cite[Cor.\ 6.8.5 and Thm.\ 7.6.1 \& 7.6.3]{AlloucheShallit-book}. We also mention here the trivial case of Cobham's theorem, which says that for any integer $t\geq 1$ the classes of $k$-automatic and $k^t$-automatic sequences coincide \cite[Theorem 6.6.3]{AlloucheShallit-book}.

The term \emph{automatic} has its origin in theoretical computer science. Informally speaking, automata (or more precisely finite deterministic $k$-automata with output) are simple finite computational devices that compute the $n$-th term of a sequence from the base-$k$ digits of $n$. For more details, see \cite{AlloucheShallit-book}. A result of Cobham asserts that the description of $k$-automatic sequences in terms of substitutions of constant length $k$ and in terms of $k$-automata are equivalent \cite[Thm.\ 6.3.2]{AlloucheShallit-book}, \cite{Cobham-1972}. In this paper, we will only work with the former definition.
Note that the assumption that a substitution is growing is trivially satisfied when the substitution is of constant length $k\geq 2$.

Yet another definition of an automatic sequence can be given in terms of kernels. Let $x=(x_n)_{n\geq 0}$ be a sequence over an alphabet $\mathcal{A}$.  Let $\N=\{0,1,\dots\}$ denote the nonnegative integers, and let $k\geq 2$ be an integer. The $k$-\textit{kernel} of $x$ is defined as the family of sequences\[\mathrm{K}_k(x)=\{(x_{k^m n +r})_{n\geq 0}\mid m,r \in \N, 0\leq r< k^m\}.\]A theorem of Cobham asserts that a sequence is $k$-automatic if and only if its $k$-kernel is finite (see \cite{Cobham-1972} or \cite[Thm.\ 6.6.2]{AlloucheShallit-book}). 

Later we will need the following result. 

\begin{lemma} \label{lem:morvarphicseq} Let $\varphi\colon \mathcal{A}\rightarrow \mathcal{A}^{*}$ be a substitution and let $w\in \mathcal{A}^*$ be nonempty. Consider the sequence $x=w\varphi(w)\varphi^2(w)\cdots$. \begin{enumerate} 
\item \label{lem:morvarphicseq1}  The sequence $x$ is substitutive. 
\item \label{lem:morvarphicseq2} If $\varphi$ is of constant length $k$, then $x$ is $k$-automatic.
\end{enumerate}
\end{lemma}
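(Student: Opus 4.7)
For \eqref{lem:morvarphicseq1} the plan is a standard marker construction. Introduce a fresh symbol $c \notin \mathcal{A}$, set $\mathcal{B} := \mathcal{A} \cup \{c\}$, and define $\psi\colon \mathcal{B} \to \mathcal{B}^*$ by $\psi(c) = cw$ and $\psi(a) = \varphi(a)$ for $a \in \mathcal{A}$. I would first verify that $\psi$ is growing (immediate, since $\varphi$ is growing and $w \neq \epsilon$) and that $c$ is prolongable under $\psi$ (because $\psi(c)$ starts with $c$); a routine induction then shows $\psi^n(c) = c \cdot w\varphi(w)\varphi^2(w)\cdots\varphi^{n-1}(w)$, whence $\psi^{\omega}(c) = cx$. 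Fixing any $a_0 \in \mathcal{A}$ and taking the coding $\pi\colon \mathcal{B} \to \mathcal{A}$ that is the identity on $\mathcal{A}$ and sends $c$ to $a_0$, we obtain that $\pi(\psi^{\omega}(c)) = a_0 x$ is substitutive. Since the class of substitutive sequences is closed under the forward shift (as recalled in the preliminaries), $x = T(a_0 x)$ is substitutive.

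For \eqref{lem:morvarphicseq2} this construction no longer works, because $|\psi(c)| = |w| + 1 \neq k$ in general. Instead, I would verify the kernel characterisation: it suffices to prove that the $k$-kernel
\begin{equation*}
\mathrm{K}_k(x) = \{(x_{k^s n + t})_{n \geq 0} : s \geq 0,\; 0 \leq t < k^s\}
\end{equation*}
is finite. Writing $m := |w|$, the starting point is the identity $x = w \cdot \varphi(x)$, equivalently $\varphi(x) = T^m x$, which iterates to $\varphi^s(x) = T^{S_s}(x)$ with $S_s := m(k^s - 1)/(k-1) = |w| + |\varphi(w)| + \cdots + |\varphi^{s-1}(w)|$. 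Since $\varphi^s$ has constant length $k^s$, this translates into the pointwise formula
\begin{equation*}
x_i = \varphi^s\bigl(x_{\lfloor (i - S_s)/k^s \rfloor}\bigr)_{(i - S_s) \bmod k^s} \qquad (i \geq S_s).
\end{equation*}
Specialising to $i = k^s n + t$ with $0 \leq t < k^s$ and $n$ sufficiently large, a short computation yields
\begin{equation*}
x_{k^s n + t} = \varphi^s(x_{n + q})_u,
\end{equation*}
where the integers $q$ and $u$ depend only on $s$ and $t$, and not on $n$.

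The decisive step -- and the reason for iterating through $\varphi^s$ rather than through $\varphi$ -- is to show that both the shift $q$ and the threshold $n_0$ above which the formula is valid are bounded \emph{uniformly} in $s$ and $t$. The key input is the inequality $S_s/k^s < m/(k-1)$, from which one reads off that $q \in \{-\lceil m/(k-1)\rceil,\dots,0\}$ and that $n_0$ may be taken to be $\lceil m/(k-1)\rceil$, both independent of $s$ and $t$. Granted this uniformity, every kernel element $(x_{k^s n + t})_{n \geq 0}$ is determined by three finite pieces of data: a head of length at most $n_0$ (a word over $\mathcal{A}$); the shift parameter $q$, lying in a fixed finite set of integers; and the induced letter-to-letter map $a \mapsto \varphi^s(a)_u$ (a function $\mathcal{A} \to \mathcal{A}$, of which there are at most $|\mathcal{A}|^{|\mathcal{A}|}$). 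Each range being finite, $\mathrm{K}_k(x)$ is finite and $x$ is $k$-automatic. The only real obstacle is the uniform boundedness of $q$ and $n_0$; once this is secured, the rest is routine bookkeeping.
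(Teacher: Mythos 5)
Your proof is correct. Part (i) is essentially identical to the paper's argument: the same marker construction $\psi(c)=cw$, $\psi|_{\mathcal{A}}=\varphi$, producing $\psi^{\omega}(c)=cx$ and then shifting (the extra coding $c\mapsto a_0$ is harmless but unnecessary, since one can shift first and land in $\mathcal{A}^{\omega}$ directly). For part (ii) you start from the same identity $x=w\varphi(x)$ as the paper, but the execution differs: the paper performs only the single-level decimation $x_{kn+j}=\varphi(x_{n+c_j})_{r_j+1}$ for large $n$ and then delegates the conclusion to a cited result (\cite[Thm.\ 2.2]{AS}, which packages exactly the statement that such eventual ``coding of a bounded shift'' relations among decimations force automaticity), whereas you iterate the relation to all levels, obtaining $\varphi^s(x)=T^{S_s}(x)$ with $S_s=m(k^s-1)/(k-1)$, and verify kernel finiteness directly via the uniform bounds $q\in[-\lceil m/(k-1)\rceil,0]$ and $n_0=\lceil m/(k-1)\rceil$. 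Your bounds are right ($S_s/k^s<m/(k-1)$ gives both), and the bookkeeping argument --- each kernel element determined by a head of bounded length, the shift $q$, and a map $\mathcal{A}\to\mathcal{A}$ --- is sound. What your route buys is self-containment: you effectively reprove the special case of the cited theorem that is needed here, at the cost of carrying the uniformity-in-$s$ estimates that the citation lets the paper avoid.
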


\begin{proof} Part \eqref{lem:morvarphicseq1}  follows from \cite[Lemma 5]{CRSZ}; since the argument is short, we include it for completeness. Let $\spadesuit$ be a letter not belonging to the alphabet $\mathcal{A}$ and consider the  substitution $\tau\colon \mathcal{A}\cup\{\spadesuit\}\rightarrow (\mathcal{A}\cup\{\spadesuit\})^*$ given by $\tau(\spadesuit)=\spadesuit w$, $\tau(a)=\varphi(a)$ for $a\in \mathcal{A}$. The sequence $\tau^{\omega}(\spadesuit)$ takes the form $\spadesuit x$ and is clearly substitutive. Therefore, since the class of substitutive sequences is closed under shifts, $x$ is substitutive as well. 

We now assume that $\varphi$ is of constant length $k$ and prove part \eqref{lem:morvarphicseq2}. Intuitively, the claim holds since the equation $x=w\varphi(x)$ allows us to express the elements of the kernel of $x$ in terms of codings of finitely many shifts of the sequence $x$. For a formal proof, put $s=|w|$.  For an integer $j\in [0,k-1]$ write $j-s=kc_j+r_j$ for integers $c_j, r_j$ with $r_j\in[0,k-1]$. Since $x=w\varphi(x)$, we see that $$x_{kn+j}=(\varphi(x))_{kn+j-s}=(\varphi(x))_{k(n+c_{j})+r_{j}}$$ whenever $kn+j\geq s$. Hence, for all sufficiently large $n$, $x_{kn+j}$ is the $(r_j+1)$-st letter of the word $\varphi(x_{n+c_{j}})$. The claim then follows from \cite[Thm. 2.2]{AS}.
\end{proof}

\subsection*{Substitutive systems}

 Let $\mathcal{A}$ be an alphabet. A system $X\subseteq \mathcal{A}^{\omega}$ is called \emph{purely substitutive} (resp., \emph{substitutive}, $k$-\emph{automatic}) if it arises as the orbit closure of a purely substitutive (resp., substitutive, $k$-automatic) sequence. Note that any such system is automatically transitive. 

There is a more general notion of systems arising from substitutions.  Let $\varphi\colon \mathcal{A}\rightarrow \mathcal{A}^*$ be a substitution and let $X_{\varphi}$ denote the dynamical system generated by  $\varphi$, i.e.\
\[
X_{\varphi}=\{z\in \mathcal{A}^{\omega} \mid \text{ every factor of } z  \text{ appears in }\varphi^n(a) \text{ for some } n\geq 0 \textrm{ and } a\in\mathcal{A}\}.
\]The system $X_{\varphi}$ does not have to be transitive; consider, e.g.\ the substitution $\varphi\colon \{0,1\}\rightarrow \{0,1\}^*$ given by $\varphi(0)=00$, $\varphi(1)=11$ for which $X_{\varphi}=\{0^{\omega},1^{\omega}\}$. It is clear that every substitutive system has the form $\pi(X_{\varphi})$ for some substitution $\varphi\colon \mathcal{A}\rightarrow \mathcal{A}^*$ and coding $\pi\colon\mathcal{A}\rightarrow\mathcal{B}$. It is also well-known that if $X_{\varphi}$ is minimal, then it is substitutive \cite[Section 5.2]{Queffelec-book}. Proposition \ref{prop:subsystemsXb} below shows more generally that  a system $X_{\varphi}$ is substitutive if and only if it is transitive.

Let $\varphi\colon \mathcal{A}\rightarrow \mathcal{A}^*$ be a substitution. For $b\in \mathcal{A}$ let $\mathcal{A}_b$ denote the set of all letters $a\in\mathcal{A}$ appearing in $\varphi^n(b)$ for some $n\geq 0$. The substitution $\varphi$ maps $\mathcal{A}_b$ to $\mathcal{A}_b^*$. For $a,b\in \mathcal{A}$, we write $b\succcurlyeq a$ if $a\in \mathcal{A}_b$. Note that the relation $\succcurlyeq $ is only a preorder on the set $\mathcal{A}$. If $b\succcurlyeq  a \succcurlyeq  b$, we write $a\sim b$. We also write $b\succ a$ if $b\succcurlyeq a$ and $b\not\sim a$. The relation  $\sim$ is an equivalence relation and $\succ$ is a strict partial order  on $\mathcal{A}$. These relations clearly depend on the substitution $\varphi$, but we will  nevertheless write $ b\succ a$ or $b\succcurlyeq a$ and call a letter minimal, maximal or equivalent to another letter when the substitution is clear from the context. For $b\in \mathcal{A}$  let $X_{\varphi, b}$ denote the subsystem of $X_{\varphi}$ generated by $b$, i.e.\
\[
X_{\varphi, b}=\{ z\in X_{\varphi}\mid \text{ every factor of } z  \text{ appears in } \varphi^n(b) \text{ for some } n\geq 0\}.
\] 	 	
Thus, $X_{\varphi, b}$ is equal to the system $X_{\varphi'}$, where $\varphi'$ is the substitution $\varphi$ restricted to the alphabet $\mathcal{A}_b$. We will often write $X_b$ instead of $X_{\varphi,b}$ when the substitution $\varphi$ is clear from the context.  Note that if $a\succcurlyeq  b$, then $X_a\supseteq X_b$. Note also that $X_{\varphi}=\bigcup_{a\in\mathcal{A}} X_a$ and  $\varphi(X_b)\subset X_b$ for each $b\in\mathcal{A}$. 

The following lemma is a variant of \cite[Lemma 6]{CRSZ} (cf.\ \cite[Prop. 5.10]{BKM} for a version for two-sided dynamical systems). 

\begin{lemma}\label{lem:prefix}
Let $\varphi\colon \mathcal{A}\rightarrow \mathcal{A}^*$ be a substitution and let $x\in X_{\varphi}$. There exist $y\in X_{\varphi}$ and a proper prefix $u$ of $\varphi(y_0)$ (possibly empty) such that $ux=\varphi(y)$.
\end{lemma}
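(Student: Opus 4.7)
The plan is to \emph{desubstitute} $x$ at the level of long finite prefixes and then pass to a subsequential limit. For each large $N$, the goal is to produce a word $z^{(N)}\in \mathcal{L}(X_\varphi)$ together with a prefix $u^{(N)}$ of $\varphi(z^{(N)}_0)$ so that $\varphi(z^{(N)})$ begins with $u^{(N)}x_{[0,N-1]}$, and moreover $u^{(N)}$ is a \emph{proper} prefix of $\varphi(z^{(N)}_0)$; after stabilizing $u^{(N)}$ and $z^{(N)}_0$ along a subsequence and extracting a limit of the $z^{(N)}$, the desired $y$ and $u$ will drop out of continuity of~$\varphi$.

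For the finite-level construction, I would start from the fact that $x_{[0,N-1]}$ is a factor of some $\varphi^n(a)$; for $N$ large enough the length forces $n\geq 1$, and writing $\varphi^n(a)=\varphi(\varphi^{n-1}(a))$ shows that $x_{[0,N-1]}$ is a factor of $\varphi(w)$ for some $w\in\mathcal{L}(X_\varphi)$. Let $j$ be the unique index with the property that the starting position of this occurrence in $\varphi(w)$ lies in the block $\varphi(w_j)$; dropping the first $j$ letters of $w$ yields a new word $z^{(N)}\in\mathcal{L}(X_\varphi)$ such that $\varphi(z^{(N)})$ starts with $u^{(N)}x_{[0,N-1]}$, where $u^{(N)}$ is a (possibly empty) proper prefix of $\varphi(z^{(N)}_0)$. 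This normalization is the whole point: without it the offset could equal $|\varphi(w_j)|$, and the limiting $u$ would fail to be proper.

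Next, extend each $z^{(N)}$ to a sequence $\hat{z}^{(N)}\in X_\varphi$ by the standard argument that $\mathcal{L}(X_\varphi)$-extensions of $z^{(N)}$ exist of arbitrary length (since $\varphi$ is growing, the words $\varphi^m(a)$ have unbounded length), and a subsequential limit in the compact space $\mathcal{A}^\omega$ lies in the closed subset $X_\varphi$. Since $|u^{(N)}|<\max_{b\in\mathcal{A}}|\varphi(b)|$ and $z^{(N)}_0\in\mathcal{A}$, finiteness lets me pass to a subsequence on which $u^{(N)}=u$ and $z^{(N)}_0=c$ are constant; a further diagonal extraction yields $\hat{z}^{(N)}\to y$ for some $y\in X_\varphi$ with $y_0=c$. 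Continuity of $\varphi\colon\mathcal{A}^\omega\to\mathcal{A}^\omega$ then gives $\varphi(\hat{z}^{(N)})\to\varphi(y)$, and since every $\varphi(\hat{z}^{(N)})$ begins with $ux_{[0,N-1]}$, the limit identity $\varphi(y)=ux$ follows, with $u$ a proper prefix of $\varphi(c)=\varphi(y_0)$.

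The main obstacle is essentially the normalization step in paragraph two: one has to commit in advance to choosing the preimage so that the residual offset falls strictly inside $\varphi$ of the first letter, rather than at its right endpoint. Once that bookkeeping is in place, everything else is routine compactness and continuity.
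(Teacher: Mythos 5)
Your finite-level construction is essentially the paper's: the paper likewise removes from $\varphi^{k-1}(b)$ the longest prefix whose $\varphi$-image does not meet the prefix $x_{[0,N-1]}$, which is exactly your normalization forcing the residual offset to fall strictly inside $\varphi(z^{(N)}_0)$ and hence making $u^{(N)}$ a proper prefix. The gap is in your limiting step, specifically in the assertions that $w$ and $z^{(N)}$ lie in $\mathcal{L}(X_\varphi)$ and that $z^{(N)}$ admits arbitrarily long right extensions inside the language. A word that is a factor of some $\varphi^{m}(b)$ need \emph{not} be a factor of any point of $X_\varphi$, and the fact that $\varphi$ is growing does not repair this. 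Concretely, take $\mathcal{A}=\{0,1\}$ with $\varphi(0)=10$ and $\varphi(1)=11$ (growing, even of constant length $2$). Then $\varphi^{m}(0)=1^{2^{m}-1}0$ and $\varphi^{m}(1)=1^{2^{m}}$, so $X_\varphi=\{1^{\omega}\}$ and $\mathcal{L}(X_\varphi)=\{1^{k}\mid k\geq 0\}$. With $x=1^{\omega}$ your recipe may legitimately select $a=0$ and $w=\varphi^{n-1}(0)=1^{2^{n-1}-1}0$, with the occurrence of $1^{N}$ starting at position $0$ of $\varphi(w)$; this gives $j=0$ and $z^{(N)}=1^{2^{n-1}-1}0$. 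No word $\varphi^{m}(b)$ contains $0$ followed by another letter, so $z^{(N)}$ has no right extension at all in $\bigcup_{m,b}\mathcal{L}(\varphi^{m}(b))$, the sequence $\hat z^{(N)}$ does not exist, and the argument halts.

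The repair is small and is exactly what the paper does: skip the extension entirely and take a limit of the finite words $z^{(N)}$ themselves. Since $|\varphi(z^{(N)})|\geq N$ while $|\varphi(z^{(N)})|\leq |z^{(N)}|\cdot\max_{b}|\varphi(b)|$, the lengths $|z^{(N)}|$ tend to infinity; after stabilizing $u^{(N)}$ along a subsequence and extracting a further subsequence on which the prefixes of $z^{(N)}$ stabilize, one obtains a sequence $y$ every prefix of which is a prefix of some $z^{(N)}$, hence a factor of some $\varphi^{n-1}(a)$. By the very definition of $X_\varphi$ this already yields $y\in X_\varphi$, with no need for $z^{(N)}$ itself to belong to $\mathcal{L}(X_\varphi)$. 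The rest of your argument --- finiteness of the possible pairs $(u^{(N)},z^{(N)}_0)$, continuity of $\varphi$, and the identification $\varphi(y)=ux$ with $u$ a proper prefix of $\varphi(y_0)$ --- then goes through verbatim.
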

\begin{proof}
Let $v_n$ be the prefix of $x$ of length $n\geq 1$. Since $x\in X_{\varphi}$, there exist $b\in\mathcal{A}$ and $k\geq 1$ such that $v_n$ is a factor of $\varphi^k(b)$. Removing from $\varphi^{k-1}(b)$ the longest prefix whose image by $\varphi$ does not intersect $v_n$, we obtain a suffix $w_n$ of $\varphi^{k-1}(b)$ with initial letter $a_n$ and a proper prefix $u_n$ of $\varphi(a_n)$ such that $u_nx$ and $\varphi(w_n)$ agree on the first $n$ positions. 

Since there are only finitely many possibilities for $u_n$ and $\mathcal{A}^{\omega}$ is compact, there exist $u\in \mathcal{A}^*$, $y\in X_{\varphi}$ and an increasing sequence $(k_m)_{m\geq 0}$ of positive integers such that $u_{k_m}=u$ for all $m \geq 0$ and $w_{k_m}$ converges to $y$ as $m \to \infty$. By construction, $u$ is then a proper prefix of $\varphi(y_0)$ and  we have $ux=\varphi(y)$.
\end{proof}

The following lemma is well-known for primitive substitutions (see, e.g.\ \cite[Prop.\ 5.4]{Queffelec-book}); we will however need the claim under the weaker assumption of transitivity. 

\begin{lemma}\label{lem:nthpower}
Let $\varphi\colon \mathcal{A}\rightarrow \mathcal{A}^*$ be a substitution. If $X_{\varphi}$ is transitive, then $X_{\varphi}=X_{\varphi^n}$ for any $n\geq 1$.
\end{lemma}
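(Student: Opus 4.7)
The plan is to establish the two inclusions $X_{\varphi^n} \subseteq X_\varphi$ and $X_\varphi \subseteq X_{\varphi^n}$. The first is immediate from the definitions, since each $\varphi^{nj}$-iterate is also a $\varphi$-iterate. For the second, since a subshift is determined by its language, it suffices to show that for every word $v$ that is a factor of some $\varphi^k(a)$ (with $k \geq 0$ and $a \in \mathcal{A}$), there exist $j \geq 0$ and $a' \in \mathcal{A}$ such that $v$ is a factor of $\varphi^{nj}(a')$.

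The central observation is that if the letter $a$ occurs as a single-letter factor of $\varphi^s(a')$, then $\varphi^k(a)$ occurs as a factor of $\varphi^{k+s}(a') = \varphi^k(\varphi^s(a'))$, obtained by applying $\varphi^k$ to the occurrence of $a$ inside $\varphi^s(a')$. Consequently, given $v$ as a factor of $\varphi^k(a)$, it suffices to find some $s \geq 0$ with $k + s \equiv 0 \pmod{n}$ and some $a' \in \mathcal{A}$ satisfying $a \in \varphi^s(a')$; this then places $v$ inside $\varphi^{nj}(a')$ with $nj = k + s$.

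To supply $s$ in every residue class modulo $n$, I will show that every letter appearing in $\mathcal{L}(X_\varphi)$ admits a \emph{parent}, that is, a letter $b \in \mathcal{A}$ with the original letter occurring as a factor of $\varphi(b)$. Let $x \in X_\varphi$ be a point with dense orbit, supplied by transitivity; then every letter of $\mathcal{L}(X_\varphi)$ occurs in $x$. Applying Lemma \ref{lem:prefix} to $x$ yields $y \in X_\varphi$ and a word $W$ with $Wx = \varphi(y) = \varphi(y_0)\varphi(y_1)\cdots$, and any occurrence of a letter $a$ in $x$ lies within some block $\varphi(y_j)$, giving $y_j$ as a parent of $a$. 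Iterating this parent operation $s$ times produces a chain $a = c_0, c_1, \ldots, c_s$ with $c_{i-1}$ a factor of $\varphi(c_i)$, from which $a$ is a factor of $\varphi^s(c_s)$ by a direct induction, for any $s \geq 0$.

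The main obstacle is that the letter $a$ appearing in the factorization of $v$ need not itself occur in $\mathcal{L}(X_\varphi)$, so the parent argument does not directly apply to it. The plan to address this is to replace the original factorization by one whose outer letter lies in $\mathcal{L}(X_\varphi)$, via the iterated form of Lemma \ref{lem:prefix}: there exist $y_m \in X_\varphi$ and a word $W_m$ with $W_m x = \varphi^m(y_m)$, so every factor $v$ of $x$ sits inside $\varphi^m(p)$ for some finite factor $p$ of $y_m$, all of whose letters belong to $\mathcal{L}(X_\varphi)$. Since $\varphi$ is growing, for $m$ sufficiently large one can bound $|p|$ in terms of $|v|$ alone and, with additional care, reduce to the single-letter case $|p| = 1$, at which point the parent-based argument from the previous paragraph finishes the proof.
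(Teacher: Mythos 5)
Your first inclusion, the single-letter parent chain, and the observation that (by transitivity) every letter of $\mathcal{L}(X_\varphi)$ occurs in the transitive point $x$ and hence admits a parent in $\mathcal{L}(X_\varphi)$ are all correct. The gap is in the final reduction. Iterating Lemma~\ref{lem:prefix} does give $W_m x=\varphi^m(y_m)$ with $y_m\in X_\varphi$, and once $\min_{c\in\mathcal{A}}|\varphi^m(c)|\geq|v|$ a fixed occurrence of $v$ meets at most two of the blocks $\varphi^m\bigl((y_m)_j\bigr)$, so $|p|\leq 2$; but nothing lets you push this to $|p|=1$. An occurrence of $v$ may straddle a block boundary that persists at every level $m$ (the level-$(m+1)$ boundaries form a subset of the level-$m$ ones, so if no level clears the interior of $v$ of boundaries, some boundary survives at all levels), in which case $v$ decomposes for every large $m$ as a suffix of $\varphi^m(c_m)$ followed by a prefix of $\varphi^m(d_m)$ --- exactly the situation of case (a.ii) of Proposition~\ref{prop:dichotomy} --- and there is no reason for $v$ to lie inside $\varphi^m(c)$ for a single letter $c\in\mathcal{L}(X_\varphi)$ at any level you control. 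If you instead settle for $|p|\leq 2$ and feed the two-letter word $p$ back into the definition of $X_\varphi$ to place it inside some $\varphi^k(b)$, you lose control of $k\bmod n$ (and $b$ may again fail to lie in $\mathcal{L}(X_\varphi)$), which is precisely the quantity the argument must steer; running your parent construction on two-letter words keeps them of length two forever, so the recursion never terminates. As written, the proof does not close.

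The paper's proof avoids this local bookkeeping by arguing globally: it partitions $X_\varphi=\bigcup_{i=0}^{n-1}X_i$, where $X_i$ consists of those points all of whose factors occur in some $\varphi^k(b)$ with $k\equiv i\pmod n$; each $X_i$ is a subsystem, so transitivity forces $X_\varphi=X_j$ for a single $j$. Then one application of Lemma~\ref{lem:prefix} to an arbitrary $x'\in X_\varphi=X_j$ --- writing $ux'=\varphi(y')$ with $y'\in X_j$, so that every factor of $x'$ sits inside $\varphi^{k+1}(b)$ with $k\equiv j$ --- shows $X_\varphi=X_{j+1}$, and iterating reaches $X_0=X_{\varphi^n}$. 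You already have all the needed ingredients (Lemma~\ref{lem:prefix}, and the fact that $\mathcal{L}(y')\subseteq\mathcal{L}(X_\varphi)=\mathcal{L}(x)$ by transitivity); the fix is to track the residue class of the exponent for the whole language at once rather than chase a single letter, which sidesteps the straddling problem entirely.
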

\begin{proof}
Let $n\geq 1$ and for an integer $i\in [0,n-1]$ consider the subsystems $X_i$ of $X_{\varphi}$ defined as
\[
X_i=\{x\in X_{\varphi} \mid \textrm{ every factor of } x \textrm{ appears in } \varphi^k(b) \textrm{ for some } b\in \mathcal{A} \textrm{ and } k\equiv i \bmod n\}.
\] Note that $X_0=X_{\varphi^n}$ and $X_{\varphi}=\bigcup_{i=0}^{n-1} X_i$. Since $X_{\varphi}$ is transitive, $X_{\varphi}=X_j$ for some integer $j\in [0,n-1]$. Let $x\in X_{\varphi}$. By Lemma \ref{lem:prefix} there exist $y\in X_{\varphi}=X_j$ and a proper prefix $u$ of $\varphi(y_0)$ such that $ux=\varphi(y)$. Since every factor of $y$ is a factor of $\varphi^k(b)$ for some $b\in\mathcal{A}$ and $k\equiv j \pmod n$, every factor of $x$ is a factor of $\varphi^{k+1}(b)$ and hence $x$ lies in $X_{j+1}$ (where $X_n=X_0$). Thus $X_{\varphi}=X_{j+1}$. Repeating the argument, we get that $X_{\varphi}=X_i$ for all integers $i\in [0,n-1]$. In particular, $X_{\varphi}=X_0=X_{\varphi^n}$.
\end{proof}
\begin{remark} The result above is not necessarily true without the assumption of transitivity. For an example, consider the substitution $\varphi\colon\{0,1,2\} \to \{0,1,2\}^*$ given by $\varphi(0)=12$, $\varphi(1)=22$, $\varphi(2)=11$. Then \[X_{\varphi}=\{1^n 2^{\omega} \mid n\geq 0\} \cup \{2^n 1^{\omega} \mid n\geq 0\} \quad \text{and} \quad X_{\varphi^2}= \{2^n 1^{\omega} \mid n\geq 0\}\cup \{2^{\omega}\}.\]
\end{remark}

Let $\varphi\colon \mathcal{A}\rightarrow \mathcal{A}^*$ be a substitution. We  say that a letter $a\in \mathcal{A}$ is \textit{ample}  if $a$ appears  in $\varphi^n(a)$ for some $n\geq 1$, and \textit{very ample}  if $a$ appears at least twice in $\varphi^n(a)$ for some $n\geq 1$. Note that for all $n\geq 1$ the sets of ample and very ample letters with respect to substitutions $\varphi$ and $\varphi^n$ are the same. The set of all ample letters is denoted by $\mathcal{A}'$.

We note two easy properties of ampleness. First, a letter equivalent to an ample letter is itself ample. Second, for any ample letter $a$ the word $\varphi(a)$ contains at least one letter equivalent to $a$. For $a\in \mathcal{A}'$ let $\lambda_{\varphi}(a)$ denote the letter $b$ which is equivalent to $a$ and which occurs in $\varphi(a)$ at the last position among all letters equivalent to $a$. This gives rise to a map $\lambda_{\varphi}\colon \mathcal{A}'\rightarrow \mathcal{A}'$. Finally, we claim that $\lambda_{\varphi}^n=\lambda_{\varphi^n}$ for all $n\geq 1$. In fact, for any letter $a$, we may write $\varphi(a)$ in the form $\varphi(a)=u\lambda_{\varphi}(a)v$ for words $u$ and $v$ such that $v$ contains only letters $b \prec a$. For any such letter $b$, the word $\varphi^{n-1}(b)$ also contains only letters $c \prec b$, and hence none of these letters is equivalent to $a$ (or equivalently $\lambda_{\varphi}(a)$). It follows that $\lambda_{\varphi^n}(a)=\lambda_{\varphi^{n-1}}(\lambda_{\varphi}(a))$, and the claim follows by induction on $n$.

\newcommand{\T}{S}
Let $\T$ be a set and let $\psi\colon \T\rightarrow \T$ be a map. We say that $\psi$ is \textit{idempotent} if $\psi^2=\psi$. Note that if $x$ is a (purely) substitutive sequence produced by a substitution $\varphi$, then $x$ is also produced by the substitution  $\varphi^n$ for any integer $n\geq 1$. Similarly, the notions of $k$-automatic and $k^n$-automatic sequences coincide  \cite[Thm.\ 6.6.4]{AlloucheShallit-book}. For these reasons, we may freely replace $\varphi$ by $\varphi^n$, which will often have nicer properties. In Definition \ref{def:idemsub} we gather all the technical properties of the substitution that we intend to obtain in this manner. We will often need only some of these properties, but for simplicity we will not attempt to always state the precise minimal assumptions.

\begin{definition}\label{def:idemsub} A substitution $\varphi \colon \mathcal{A}\to \mathcal{A}^*$ is called \emph{idempotent} if it satisfies the following conditions: 
\begin{enumerate} 
\item \label{lem:propofsubs1} for all $a\in\mathcal{A}$ and $n\geq 1$ the set of letters appearing in $\varphi(a)$ is the same as the set of letters appearing in $\varphi^n(a)$;
\item\label{lem:propofsubs1'} for all $a\in\mathcal{A}$ and $n\geq 1$ the set of letters appearing  at least twice in $\varphi(a)$ is the same as the set of letters appearing  at least twice in $\varphi^n(a)$;
\item \label{lem:propofsubs2} for all $a\in \mathcal{A}$ the initial letter of $\varphi(a)$ is prolongable;
\item \label{lem:propofsubs3} the map $\lambda_{\varphi}\colon \mathcal{A}'\rightarrow \mathcal{A}'$  is idempotent.
\end{enumerate}
\end{definition} 

Note that if $\varphi\colon\mathcal{A}\rightarrow\mathcal{A}^*$ is an idempotent substitution, then for each $b\in\mathcal{A}$, $\mathcal{A}_b$ consists exactly of $b$ and the letters appearing in $\varphi(b)$.  Furthermore, a letter $b$ is ample if and only if $b$ appears in $\varphi(b)$ and it is very ample if and only if it appears at least twice in $\varphi(b)$.

\begin{lemma}\label{lem:elementary}
Let $\T$ be a finite set, and let $\psi\colon \T\rightarrow \T$ be a map. There exists an integer $m\geq 1$ such that $\psi^m$ is idempotent.
\end{lemma}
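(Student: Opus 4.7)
The plan is to reduce the statement to the standard fact that the sequence of iterates of a map on a finite set is eventually periodic, and then pick $m$ so that both $m$ and $2m$ lie in the periodic regime and their difference is a period.

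More concretely, I would first note that since $S$ is finite, the set of self-maps $S\to S$ is finite (of cardinality $|S|^{|S|}$), so by the pigeonhole principle there exist integers $0\leq i<j$ with $\psi^i=\psi^j$. Setting $p=j-i\geq 1$ and $s=i$, one gets $\psi^{n+p}=\psi^n$ for all $n\geq s$, and by induction $\psi^{n+kp}=\psi^n$ for all $n\geq s$ and all $k\geq 0$.

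Then I would choose $m$ to be any positive multiple of $p$ that is at least $s$ (for instance, $m=sp$ if $s\geq 1$, or $m=p$ if $s=0$). With this choice, $m\geq s$ and $m=kp$ for some $k\geq 1$, so applying the periodicity relation $k$ times starting from $n=m$ gives $\psi^{2m}=\psi^{m+kp}=\psi^m$. That is, $(\psi^m)^2=\psi^m$, so $\psi^m$ is idempotent.

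There is no real obstacle here; the lemma is a clean consequence of eventual periodicity of iterates in a finite semigroup. The only thing worth being careful about is that the lemma demands $m\geq 1$, which is automatic from the construction, and that one does not confuse the two meanings of idempotent in play (the map $\psi^m$ being idempotent as an element of the transformation monoid, versus a substitution being idempotent in the sense of Definition~\ref{def:idemsub}; the present lemma is an abstract ingredient used to verify one of the conditions of that definition).
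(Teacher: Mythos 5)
Your proof is correct and rests on the same underlying fact as the paper's: the cyclic semigroup generated by $\psi$ inside the finite transformation monoid of $\T$ is eventually periodic, hence contains an idempotent power. The paper simply exhibits the explicit witness $m=|\T|!$ and cites the general semigroup fact, whereas you spell out the index-and-period argument in full; both are the standard route.
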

\begin{proof}
Take, for example, $m=|\T|!$. (This is a special case of a well-known fact that any finite semigroup has an idempotent, which itself is a very special case of the Ellis--Numakura Lemma \cite[Lem.\ 1]{Ellis58}.) 
\end{proof}

\begin{lemma}\label{lem:propofsubs}
Let $\varphi\colon \mathcal{A}\rightarrow \mathcal{A}^*$ be a substitution. There exists an integer $m\geq 1$ such that the substitution $\varphi^m$ is idempotent.
\end{lemma}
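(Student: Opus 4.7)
The plan is to handle each of the four defining conditions of Definition \ref{def:idemsub} separately, in each case exhibiting a self-map on a finite set and invoking Lemma \ref{lem:elementary} to obtain a power that renders the map idempotent. The conclusion then follows by taking any common multiple of the resulting four exponents: if $\psi^{m_0}$ is idempotent then $\psi^{m_0 k}=\psi^{m_0}$ is idempotent for every $k\geq 1$, so each property secured for one power persists under further iteration.

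Conditions (i) and (ii) I would handle simultaneously. For each letter $a\in\mathcal{A}$ and integer $n\geq 0$, let $\mu_n^a\colon \mathcal{A}\to\{0,1,{\geq}2\}$ record, for each letter $b$, the number of occurrences of $b$ in $\varphi^n(a)$, truncated at $\geq 2$. Equip $\{0,1,{\geq}2\}$ with the natural truncated arithmetic (the element ${\geq}2$ being absorbing in both addition and multiplication, and $1+1={\geq}2$). The multiplicity of $b$ in $\varphi^{n+1}(a)$ is then the sum over $c\in\mathcal{A}$ of the product of the multiplicity of $c$ in $\varphi^n(a)$ and the multiplicity of $b$ in $\varphi(c)$, which shows that $\mu_{n+1}^a$ is determined by $\mu_n^a$. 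This defines a self-map $\Psi$ on the finite set $\{0,1,{\geq}2\}^{\mathcal{A}}$, and Lemma \ref{lem:elementary} produces $m_2\geq 1$ with $\Psi^{m_2}$ idempotent; then $\mu_{m_2}^a=\mu_{m_2 n}^a$ for every $a\in\mathcal{A}$ and $n\geq 1$, which is exactly (i) and (ii) for $\varphi^{m_2}$.

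For condition (iii), let $\iota\colon \mathcal{A}\to\mathcal{A}$ send each letter to the initial letter of its $\varphi$-image. Then the initial letter of $\varphi^n(a)$ equals $\iota^n(a)$, and condition (iii) for $\varphi^m$ is precisely the identity $\iota^{2m}=\iota^m$, i.e., idempotency of $\iota^m$, which Lemma \ref{lem:elementary} supplies. For condition (iv), the discussion preceding Definition \ref{def:idemsub} already records that $\lambda_{\varphi^n}=\lambda_\varphi^n$, so idempotency of $\lambda_{\varphi^m}$ is equivalent to idempotency of $\lambda_\varphi^m$, again furnished by Lemma \ref{lem:elementary}. Taking $m$ to be any common multiple (for concreteness, the product) of the exponents produced for each condition then yields an idempotent power $\varphi^m$. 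The only step that requires any real care is verifying that the truncated multiplicity profile evolves deterministically in (i)--(ii); once this straightforward bookkeeping is done, the rest of the argument is routine.
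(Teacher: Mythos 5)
Your argument is correct and follows essentially the same route as the paper: each condition of Definition \ref{def:idemsub} is reduced to the idempotency of a self-map of a finite set (the paper uses $2^{\mathcal{A}}$ for (i) and $3^{\mathcal{A}}$ for (ii), which is exactly your $\{0,1,{\geq}2\}^{\mathcal{A}}$, and the maps $\alpha_\varphi$ and $\lambda_\varphi$ for (iii) and (iv)), after which Lemma \ref{lem:elementary} and a common multiple finish the job. One tiny correction to your truncated arithmetic: ${\geq}2$ must not absorb $0$ under multiplication (the product with $0$ has to be $0$, since otherwise the recursion for $\mu_{n+1}^a$ would overcount letters $b$ that do not occur in $\varphi(c)$); with that standard convention the truncation $\mathbf{N}\to\{0,1,{\geq}2\}$ is a semiring homomorphism and your bookkeeping goes through.
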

\begin{proof}

Note first that properties \eqref{lem:propofsubs1}, \eqref{lem:propofsubs1'}, \eqref{lem:propofsubs2} and \eqref{lem:propofsubs3} in Definition \ref{def:idemsub} are preserved after replacing $\varphi$ by its iterate. Thus, it is enough to find an appropriate integer $m$ for each of them separately and then take the final $m$ to be their common multiple. We will first choose $m$ so that properties \eqref{lem:propofsubs1} and \eqref{lem:propofsubs1'} hold for $\varphi^m$. For $a\in \mathcal{A}$ and a substitution $\varphi\colon \mathcal{A}\rightarrow \mathcal{A}^*$, let $S_{\varphi}(a)$ denote the set of letters appearing in $\varphi(a)$. Consider the map $\psi\colon 2^{\mathcal{A}}\rightarrow 2^{\mathcal{A}}$ that sends a subset $A$ of $\mathcal{A}$ to the set $\bigcup_{a\in A} S_{\varphi}(a)$.   Note that $\psi^n(\{a\})=S_{\varphi^n}(a)$ for all $n\geq 1$ and $a\in \mathcal{A}$.  By Lemma \ref{lem:elementary} there exists an integer $m\geq 1$ such that $\psi^m=\psi^{2m}$. This implies that $S_{\varphi^m}(a)=S_{\varphi^{nm}}(a)$ for all  $a\in \mathcal{A}$ and $n\geq 1$, and hence the substitution $\varphi^m$ satisfies property  \eqref{lem:propofsubs1}. We then obtain property \eqref{lem:propofsubs1'} by repeating the reasoning above with the set $2^{\mathcal{A}}$ replaced by the set $3^{\mathcal{A}}$, which includes the information on whether a letter appears in a word at least twice, exactly once or not at all. Thus, we may assume that properties  \eqref{lem:propofsubs1} and \eqref{lem:propofsubs1'} hold. 

To prove the remaining properties, let $\alpha_{\varphi}(a)$ denote the initial letter of $\varphi(a)$ for $a\in \mathcal{A}$. Note that $\alpha_{\varphi}^n=\alpha_{\varphi^n}$ for all $n\geq 1$. By Lemma \ref{lem:elementary}, there exists $m'\geq 1$ such that $\alpha_{\varphi}^{m'}=\alpha_{\varphi}^{2m'}$, and hence $\varphi^{m'}$ satisfies property \eqref{lem:propofsubs2}. A similar reasoning applied to the map $\lambda_{\varphi}\colon \mathcal{A'} \to \mathcal{A'}$ proves that some iterate of $\varphi^{m'}$ satisfies the remaining property \eqref{lem:propofsubs3}.\end{proof}

\section{Subsystems of substitutive systems}\label{sec:subsystems}

This  section studies subsystems of substitutive systems.  Recall that every substitutive system is a topological factor of a purely substitutive system $X$ (the factor map being given by a coding), and that we may assume that the substitutive sequence generating $X$ is produced by an idempotent substitution (see Lemma \ref{lem:propofsubs}). Our main result is Theorem \ref{thm:main} below, which says that transitive subsystems of substitutive systems are still substitutive. 
The proof of this result will occupy the whole section. We will first prove the statement for purely substitutive sequences  and obtain the general result by an easy reduction. Along the way, we will obtain a more detailed description of all transitive subsystems.

\begin{theorem}\label{thm:main}
Every transitive subsystem of a substitutive system is substitutive. Every transitive subsystem of a $k$-automatic system is $k$-automatic.
\end{theorem}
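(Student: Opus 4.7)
The plan is to deduce Theorem \ref{thm:main} from the sharper structural statement of Theorem \ref{mainthmB}, together with Lemma \ref{lem:morvarphicseq} which supplies substitutivity and $k$-automaticity of sequences of the form $w\tau(w)\tau^2(w)\cdots$. Before attacking the structure, I would carry out a double reduction. Suppose $X=\overline{\Orb(\pi(\varphi^{\omega}(a_0)))}$ is substitutive and $Y\subset X$ is transitive with dense-orbit point $y$; choosing any preimage $x\in\pi^{-1}(y)$ inside $X_{\varphi,a_0}$ and setting $Z=\overline{\Orb(x)}$, continuity gives $\pi(Z)=Y$, so it suffices to realize $Z$ as a purely substitutive (resp., $k$-automatic) subsystem of $X_{\varphi,a_0}$, since codings preserve both properties. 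By Lemma \ref{lem:propofsubs} I would then replace $\varphi$ by an idempotent iterate $\varphi^m$; this is permitted because $\varphi^{\omega}(a_0)=(\varphi^m)^{\omega}(a_0)$, so the ambient substitutive system is unchanged.

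Given an idempotent $\varphi$ and a dense-orbit point $z\in Z$, iterated application of Lemma \ref{lem:prefix} produces a desubstitution chain $z^{(0)}=z,z^{(1)},z^{(2)},\ldots$ in $X_{\varphi}$ together with proper prefixes $u_n$ of $\varphi(z^{(n)}_0)$ satisfying $u_n z^{(n-1)}=\varphi(z^{(n)})$, which combine to give the identities $\varphi^{n-1}(u_n)\cdots\varphi(u_2)u_1\cdot z=\varphi^n(z^{(n)})$ for every $n\geq 1$. By compactness of $\mathcal{A}^{\omega}$ and finiteness of $\mathcal{A}$, I would pass to a subsequence along which the initial letters $z^{(n)}_0$ stabilize at a single letter $a$ (necessarily ample) and along which the $z^{(n)}$ themselves converge. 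Exploiting all clauses of idempotency---stability of the letter sets under iteration, prolongability of initial letters, and idempotency of $\lambda_{\varphi}\colon\mathcal{A}'\rightarrow\mathcal{A}'$---I would show that the leftward extensions of $z$ inside the identities above must stabilize to a backward infinite sequence of the form $\cdots\tau^2(v)\tau(v)v$, with $\tau=\varphi^m$ and $v$ drawn from an explicit \emph{finite} set $W\subset\mathcal{A}^*$ depending only on $\varphi$. A symmetric analysis of the right end of $z$ produces a forward tail of the form $w\tau(w)\tau^2(w)\cdots$ with $w\in W$, with letters $a,b\in\mathcal{A}\cup\{\epsilon\}$ interpolating in the middle, realizing the biinfinite template of Theorem \ref{mainthmB}.

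Once the template is in hand, Lemma \ref{lem:morvarphicseq}\eqref{lem:morvarphicseq1} (resp., \eqref{lem:morvarphicseq2}) immediately makes the forward tail substitutive (resp., $k$-automatic), and because shifting and prepending a finite word preserves both classes, the suffix $y$ of the biinfinite sequence generating $Z$ is of the required type. The hard part will be the middle step: isolating a uniform finite set $W$ that serves every transitive subsystem $Z$ simultaneously, and verifying that the leftward and rightward limits of the desubstitution chain truly conform to the template. This requires using every clause of Definition \ref{def:idemsub} in tandem to pin down the admissible words $v$ and $w$. A secondary subtlety is the language equality $\mathcal{L}(y)=\mathcal{L}(Z)$: factors of the biinfinite sequence must reappear in $z$, which holds because long factors of $\tau^n(v)$ or $\tau^n(w)$ already occur inside $\varphi^n(z^{(n)})$ and hence, after shifting, inside $z$ itself.
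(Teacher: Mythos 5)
Your overall architecture (reduce via the coding and via an idempotent iterate, desubstitute a generating point with Lemma \ref{lem:prefix}, land on the template of Theorem \ref{mainthmB}, and finish with Lemma \ref{lem:morvarphicseq}) matches the paper's, and the reduction steps are carried out correctly. But there is a genuine gap in the middle step: you claim that for an \emph{arbitrary} dense-orbit point $z$ of a transitive subsystem $Z$, the desubstitution chain forces $z$ itself to conform to the biinfinite template. This is false. Take $\varphi$ primitive (e.g.\ Thue--Morse): then $X_\varphi$ is minimal, every one of its uncountably many points has dense orbit, yet only countably many of them can be substitutive, so a generic choice of $z$ is not of the template form and its leftward extensions do not stabilize. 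Concretely, in the desubstitution chain the proper prefixes $u_n$ of $\varphi(z^{(n)}_0)$ may fail to be maximal (i.e.\ have length $<|\varphi(z^{(n+1)}_0)|-1$) infinitely often, and in that case all you can conclude is that $z\in X_a$ for some ample letter $a$ --- which says nothing about $z$ having the template form. Theorem \ref{mainthmB} asserts only that $Z$ \emph{can be generated by} a sequence of template form, not that every generator has that form.

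What is missing is the case split that the paper makes: either $Z=X_{\varphi,b}$ for some letter $b$, or it is not. In the second case your analysis goes through (this is Proposition \ref{prop:dichotomy} of the paper: once $Y\subsetneq X_a$, the occurrences of letters equivalent to $a$ in the $z^{(n)}$ are forced to the initial position, $\lambda_\varphi$-idempotency pins down $a$, and the template emerges). In the first case you must \emph{discard} $z$ and build a different generator: one has to show that a transitive $X_b$ forces $b$ to be prolongable or very ample (itself a nontrivial argument, the paper's Proposition \ref{prop:subsystemsXb}, whose Case II again invokes the dichotomy), and then exhibit the explicit generator $\varphi^\omega(b)$ or $w\varphi(w)\varphi^2(w)\cdots$ where $\varphi(b)=vbw$ with $b$ occurring in $w$. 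Without this branch the proof does not cover minimal subsystems, nor any subsystem of the form $X_b$. By contrast, the uniformity of the finite set $W$, which you flag as the hard part, is not needed for Theorem \ref{thm:main} at all and is in any case easy ($W$ is read off from the words $v_a,w_a$ in $\varphi(a)=v_aaw_a$ and from the prolongable letters, all determined by $\varphi$ alone).
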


\subsection*{Minimal subsystems of substitutive systems} 

We start by investigating minimal subsystems of (purely) substitutive systems. Actually, we work in a slightly more general context of systems of the form $X_{\varphi}$ with $\varphi$ idempotent. We will show that in this case all minimal subsystems arise as $X_b$ for some minimal letter $b$. In particular, every minimal subsystem of a substitutive system is substitutive and  every minimal subsystem of a  $k$-automatic system is $k$-automatic.

\begin{proposition}\label{prop:minimalsubsystems}
Let $\varphi\colon \mathcal{A}\rightarrow \mathcal{A}^{*}$ be an idempotent substitution.  Let $Y$ be a subsystem of $X_{\varphi}$. Then $Y$ is minimal if and only if $Y=X_b$ for some minimal letter $b\in\mathcal{A}$.
\end{proposition}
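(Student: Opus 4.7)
My plan is to prove both implications, reducing the harder one to the primitive case. For the easy implication, I would show that if $b$ is minimal then $\varphi|_{\mathcal{A}_b}$ is primitive, and invoke the classical fact that primitive substitutions generate minimal subshifts (e.g.\ \cite[Prop.\ 5.5]{Queffelec-book}). Indeed, minimality of $b$ means every letter $a$ with $b \succcurlyeq a$ is equivalent to $b$, so $\mathcal{A}_b$ is a single equivalence class; idempotency \eqref{lem:propofsubs1} then forces every letter of $\mathcal{A}_b$ to appear in $\varphi(a)$ for each $a \in \mathcal{A}_b$, giving primitivity already at the first step.

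The core of the harder direction is to exhibit a fixed point $\varphi^{\omega}(b) \in Y$ for some prolongable letter $b$. Given $y \in Y$, I would iterate Lemma \ref{lem:prefix} to produce, for each $n \geq 1$, a point $y^{(n)} \in X_{\varphi}$ and a word $u^{(n)}$ with $\varphi^n(y^{(n)}) = u^{(n)} y$ and $|u^{(n)}| < |\varphi^n(y^{(n)}_0)|$. Setting $k_n = |\varphi^n(y^{(n)}_0)| - |u^{(n)}|$ and $z_n = \varphi(y^{(n)}_{[1,\infty)}) \in X_{\varphi}$, one checks that $T^{k_n}(y) = \varphi^{n-1}(z_n) \in Y$, and the first letter $(z_n)_0 = \alpha_{\varphi}(y^{(n)}_1)$ is prolongable by the idempotency of $\alpha_{\varphi}$ in \eqref{lem:propofsubs2}. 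A pigeonhole argument over the finitely many prolongable letters then yields a fixed $b$ with $(z_n)_0 = b$ for infinitely many $n$; extracting a further convergent subsequence gives $\varphi^{n_k - 1}(z_{n_k}) \to y^{*} \in Y$. Since $b$ is prolongable, the prefixes $\varphi^{n_k - 1}(b)$ of $\varphi^{n_k - 1}(z_{n_k})$ are nested and of unbounded length, forcing $y^{*} = \varphi^{\omega}(b)$.

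Minimality of $Y$ now gives $Y = \overline{\Orb(\varphi^{\omega}(b))}$ with $\varphi^{\omega}(b)$ uniformly recurrent. I would then argue that $b$ is minimal by contradiction: if $c \prec b$, then \eqref{lem:propofsubs1} places $c$ in $\varphi(b)$, so $\varphi^n(c)$ is a factor of $\varphi^{n+1}(b)$ and of $\varphi^{\omega}(b)$ for every $n$. Since $b \notin \mathcal{A}_c$, none of these factors contains the letter $b$, producing arbitrarily long $b$-free factors of $\varphi^{\omega}(b)$ and contradicting its uniform recurrence. With $b$ minimal, the easy direction makes $X_b$ minimal, and the inclusion $Y \subseteq X_b$ forces $Y = X_b$. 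The main obstacle is ensuring the pigeonholed letter $b$ is prolongable, since this is precisely what makes the nested prefixes $\varphi^{n_k - 1}(b)$ converge to a genuine $\varphi$-fixed point rather than only an accumulation point in $X_{\varphi}$; this is what the idempotency assumption \eqref{lem:propofsubs2} of Definition \ref{def:idemsub} secures.
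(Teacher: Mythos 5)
Your proof is correct, but the forward implication follows a genuinely different route from the paper's. The paper argues entirely at the level of languages: every sufficiently long word of $\mathcal{L}(Y)$ contains a full block $\varphi^m(a)$, so pigeonholing over the letter $a$ and invoking property \eqref{lem:propofsubs1} of Definition \ref{def:idemsub} yields a \emph{minimal} letter $b$ with $\varphi^n(b)\in\mathcal{L}(Y)$ for all $n$, whence $X_b\subseteq Y$ and minimality of $Y$ finishes at once --- no fixed point of $\varphi$ inside $Y$ is ever exhibited, and neither Lemma \ref{lem:prefix} nor property \eqref{lem:propofsubs2} is used. You instead desubstitute $y$ via Lemma \ref{lem:prefix} to land an actual fixed point $\varphi^{\omega}(b)\in Y$ with $b$ prolongable, and must then do extra work (the uniform-recurrence argument) to show that $b$ is minimal, since prolongability alone does not give this; that step rests on the standard fact that every point of a minimal subshift is uniformly recurrent, which the paper nowhere states but which is unimpeachable, and your contradiction via arbitrarily long $b$-free factors $\varphi^n(c)$ is sound. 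Your route is longer and consumes more of the idempotency package, but it buys an explicit substitutive generator $\varphi^{\omega}(b)$ of $Y$, anticipating the desubstitution technique the paper only deploys later in Proposition \ref{prop:dichotomy}. Two small points you should spell out: the final inclusion $Y\subseteq X_b$ (every factor of $\varphi^{\omega}(b)$ occurs in some $\varphi^n(b)$), and, in the primitivity check, the fact that $a$ itself appears in $\varphi(a)$ for each $a\in\mathcal{A}_b$ --- both are immediate from the remark following Definition \ref{def:idemsub}, since minimal letters are ample.
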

\begin{proof} 
First we show that every system $X_b$ with $b$ minimal is minimal. If $b\in\mathcal{A}$ is a minimal letter, then the substitution  $\varphi|_{\mathcal{A}_b}\colon \mathcal{A}_b\rightarrow \mathcal{A}_b^{*}$ is primitive. Since every primitive substitution gives rise to a minimal system \cite[Prop.\ 5.5]{Queffelec-book}, $X_b$ is minimal.

Now assume that $Y$ is a minimal subsystem of $X_{\varphi}$. Fix an integer $m \geq 1$. Choosing a sufficiently long word $w\in \mathcal{L}(Y)$, we can find a letter $a\in\mathcal{A}$ such that $\varphi^m(a)$ appears in $w$, and hence $\varphi^m(a) \in \mathcal{L}(Y)$. Since $\mathcal{A}$ is finite, there is some letter $a\in \mathcal{A}$ such that $\varphi^m(a) \in \mathcal{L}(Y)$ for infinitely many $m$. Since $\varphi$ is idempotent, the set of letters appearing in $\varphi^l(a)$ is independent of  $l \geq 1$, and hence some minimal letter $b$ appears in $\varphi^{l}(a)$ for all $l \geq 1$. It follows that $\varphi^n(b)\in \mathcal{L}(Y)$ for all $n\geq 0$, and hence $X_b\subset Y$. By minimality of $Y$, we have $X_b=Y$. 
\end{proof}

\begin{corollary}\label{cor:minimalfinite}
 Let $X$ be a substitutive system. The number of minimal subsystems of $X$ is finite.
\end{corollary}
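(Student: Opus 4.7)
The plan is to reduce the claim to Proposition \ref{prop:minimalsubsystems} by realising $X$ as a topological factor of a system of the form $X_a$ with $\varphi$ idempotent, and then pulling back minimal subsystems via Lemma \ref{lem:factor}. Unwinding the definition, $X$ is the orbit closure of some substitutive sequence $x = \pi(y)$, where $\pi\colon \mathcal{A}\to\mathcal{B}$ is a coding and $y = \varphi^{\omega}(a)$ is purely substitutive for some prolongable letter $a\in\mathcal{A}$. Since $y$ is also a fixed point of every power $\varphi^m$, I may replace $\varphi$ by a suitable power and invoke Lemma \ref{lem:propofsubs} to assume that $\varphi$ is idempotent. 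A short verification shows that the orbit closure of $y$ in $\mathcal{A}^{\omega}$ coincides with $X_a$ (both consist of the sequences whose factors all appear in some $\varphi^n(a)$), and hence $X = \pi(X_a)$, with $\pi$ extending to a factor map $X_a\to X$.

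Next, by Lemma \ref{lem:factor}, every minimal subsystem of $X$ arises as $\pi(Z)$ for some minimal subsystem $Z$ of $X_a$; it therefore suffices to bound the number of minimal subsystems of $X_a$. Observe that $X_a = X_{\varphi'}$ for the restricted substitution $\varphi' := \varphi|_{\mathcal{A}_a}\colon \mathcal{A}_a\to\mathcal{A}_a^*$, and that $\varphi'$ inherits idempotency from $\varphi$, since each of the four conditions in Definition \ref{def:idemsub} transfers immediately to the $\varphi$-invariant subalphabet $\mathcal{A}_a$. Proposition \ref{prop:minimalsubsystems} then tells us that the minimal subsystems of $X_a$ are precisely the sets $X_b$ for minimal letters $b\in\mathcal{A}_a$, of which there are at most $|\mathcal{A}_a|$. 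Consequently $X$ has at most $|\mathcal{A}_a|$ minimal subsystems, which is finite.

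I do not anticipate any real obstacle; the entire content has been packaged into Proposition \ref{prop:minimalsubsystems}, and the corollary is essentially a matter of chasing definitions. The only mildly delicate points are verifying the identification $\overline{\mathcal{O}(y)} = X_a$ (so that one can honestly apply Lemma \ref{lem:factor} with $X_a$ in place of a larger $X_\varphi$) and checking that idempotency survives restriction to $\mathcal{A}_a$, but both are immediate.
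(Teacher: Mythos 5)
Your proof is correct and follows essentially the same route as the paper: realise $X$ as a coding image of a purely substitutive system with idempotent substitution, pull back minimal subsystems via Lemma \ref{lem:factor}, and invoke Proposition \ref{prop:minimalsubsystems}. The extra details you supply (the identification $\overline{\Orb(y)}=X_a$ and the stability of idempotency under restriction to $\mathcal{A}_a$) are correct and merely make explicit what the paper leaves implicit.
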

\begin{proof}
If $X$ is purely substitutive, then it is of the form $X=X_{\varphi}$ for some idempotent substitution $\varphi$, and the claim follows from Proposition \ref{prop:minimalsubsystems}. In the general case, write $X$ as a topological factor of a purely substitutive system and use Lemma \ref{lem:factor}.
\end{proof}

\begin{remark} A very special case of Proposition \ref{prop:minimalsubsystems} was proven in a different language in \cite[Lemma 2.3]{BK} by the first-named and the second-named author for constant length substitutions and one-point subsystems (and with a slightly weaker notion of idempotency). \end{remark} 

\subsection*{Transitive subsystems of substitutive systems} 

Let $\varphi\colon \mathcal{A}\rightarrow \mathcal{A}^{*}$ be a substitution. If $a$ is a (not necessarily minimal) letter in $\mathcal{A}$, then $X_a$ is a subsystem of $X_{\varphi}$. 
 It would be tempting to conjecture that all transitive subsystems of $X_{\varphi}$ are of this form. The following examples show that this is not the case. 
 
\begin{example}\label{infinite}\leavevmode \begin{enumerate}[wide]

\item \label{example1} Let $\mathcal{A}=\{0,1,2,3\}$ and let $\varphi\colon \mathcal{A}\rightarrow \mathcal{A}^*$ be the substitution given by
\[ \varphi(0)=12,\quad  \varphi(1)=11,\quad  \varphi(2)=23,\quad \varphi(3)=32.\]
Let $y$ denote the biinfinite sequence $y={}^{\omega}\!\varphi(1)\varphi^{\omega}(2)$. For an integer $n$  consider the suffix $y_{[n,\,\infty)}=y_n y_{n+1}\cdots$ of $y$. This is just the Thue--Morse sequence on the alphabet $\{2,3\}$ with $n$ first symbols removed if $n\geq 0$ or preceded by $1^{|n|}$ if $n<0$, and it lies in $X_{\varphi}$ since every factor of $y$ is a factor of some $\varphi^n(12)=\varphi^{n+1}(0)$. Consider the subsystems $Y_n=\overline{\Orb(y_{[n,\,\infty)})}\subset X_{\varphi}$. For $n\geq 0$, the system $Y_n$ is just the Thue--Morse system (since it is minimal), while for $n<0$ it is equal to the Thue--Morse system with $|n|$ extra points adjoined. Hence, for $n<0$ the systems $Y_n$ are pairwise distinct, and are different from each $X_b$ for $b\in \mathcal{A}$. In fact, $X_{\varphi}=\bigcup_{n\leq 0} Y_n \cup \{1^{\omega}\}$ and $X_0=X_{\varphi}$, $X_1=\{1^{\omega}\}$ and $X_2=X_3=Y_0$ (cf.\ Corollary \ref{cor:dichotomy} below and note that $X_2$ is minimal).

\item  \label{example2} Let $\mathcal{A}=\{0,1,2,3\}$ and let $\tau\colon \mathcal{A}\rightarrow \mathcal{A}^*$ be the substitution given by
\[\tau(0)=01023,\quad \tau(1)=12,\quad \tau(2)=22,\quad \tau(3)=33.
\] Write $v=01$ and $w=23$. Let $z$ denote the biinfinite sequence (indexed so that the $0$ below occurs at the $0$-th position)
\[z=\cdots\tau^2(v)\tau(v)v0w\tau(w)\tau^2(w)\cdots.\]
For an integer $n$ consider the suffix $z_{[n,\,\infty)}=z_nz_{n+1}\cdots$ of $z$. Every factor of $z_{[n,\,\infty)}$ is a factor of some $\tau^m(0)$, and hence $z_{[n,\,\infty)}$ lies in $X_{\tau}$. Consider the subsystems $Z_n=\overline{\Orb(z_{[n,\,\infty)})}\subset X_{\tau}$. It is easy to see that $Z_n= {\Orb(z_{[n,\,\infty)})}\cup\{3^k2^{\omega} \mid k\geq 0\}\cup \{2^k3^{\omega}\mid k\geq 0\}$, and hence the systems $Z_n$ are pairwise distinct and different from each $X_b$ for $b\in \mathcal{A}$.
\end{enumerate}
\end{example}

Now assume that $\varphi\colon \mathcal{A}\rightarrow \mathcal{A}^{*}$ is an idempotent substitution. The next proposition characterises points $y\in X_{\varphi}$ such  that $\overline{\Orb(y)}$ is not equal to any $X_b$ for $b\in\mathcal{A}$. We show that all such points are substitutive. Note that this is by no means obvious. In fact, substitutive systems have often continuum many points (e.g.\ the Thue--Morse system), while the number of substitutive sequences over a given alphabet is only countable.

\begin{proposition}\label{prop:dichotomy}
Let $\varphi\colon \mathcal{A}\rightarrow \mathcal{A}^{*}$ be an idempotent substitution. Let $y\in X_{\varphi}$ and let $Y$ be the orbit closure of $y$. Then at least  one of the following conditions holds: 
 \begin{enumerate}[label=\textup{(}a.\roman*\textup{)}]
\item\label{prop:dichotomya1} there exists a letter $a$ in $\mathcal{L}(X_{\varphi})$ such that $y\in X_a$;
\item\label{prop:dichotomya2} there exist a backwards prolongable letter $a$ and a prolongable letter $c$ such that $ac\in \mathcal{L}(X_{\varphi})$ and  $y$ is a suffix of ${}^{\omega}\!\varphi(a) \varphi^{\omega}(c)$. 
\end{enumerate}
Assume moreover that $Y$ is different from each $X_b$ for $b\in \mathcal{A}$. Then at least one of the following conditions holds: 
\begin{enumerate}[label=\textup{(}b.\roman*\textup{)}]

\item\label{prop:dichotomyb1} there exists a letter $a$ such that $\varphi(a)=v_a a w_a$ for some words $v_a$ and $w_a$ such that $w_a\neq \epsilon$, $w_a$ contains only letters $b$ such that $b\prec a$, and $y$ is a suffix of \[\cdots \varphi^2(v_a)\varphi(v_a) v_a a w_a \varphi(w_a) \varphi^2(w_a) \cdots; \]
\item\label{prop:dichotomyb2}  there exist a backwards prolongable letter $a$ and a prolongable letter $c$ such that $ac\in \mathcal{L}(X_{\varphi})$ and  $y$ is a suffix of ${}^{\omega}\!\varphi(a) \varphi^{\omega}(c)$ (i.e.\ the sequence $y$ satisfies condition \ref{prop:dichotomya2}).
\end{enumerate}\end{proposition}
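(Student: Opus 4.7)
The plan is to iteratively apply Lemma \ref{lem:prefix} to ``desubstitute'' $y$: starting from $y^{(0)} := y$, I construct $y^{(n)} \in X_\varphi$ and proper prefixes $u_n$ of $\varphi(y^{(n)}_0)$ with $\varphi(y^{(n)}) = u_n y^{(n-1)}$. Setting $a_n := y^{(n)}_0$ and $w_n := \varphi^{n-1}(u_n)\varphi^{n-2}(u_{n-1}) \cdots \varphi(u_2) u_1$, one has $\varphi^n(y^{(n)}) = w_n y$, and since $w_n$ is always a suffix of $w_{n+1}$, the lengths $|w_n|$ are non-decreasing; either they stabilize or they tend to infinity.

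First, if $|w_n|$ stays bounded, then $u_n = \epsilon$ for all $n \geq N$, so $y^{(n-1)} = \varphi(y^{(n)})$ and $a_{n-1} = \alpha_\varphi(a_n)$. The idempotency of $\alpha_\varphi$ (Definition \ref{def:idemsub}\eqref{lem:propofsubs2}) then forces the letters $a_n$ to stabilize to a prolongable letter $a$, whence $y^{(n)} = \varphi^\omega(a)$ for all large $n$; unwinding the finitely many remaining preimage steps exhibits $y$ as a suffix of $\varphi^\omega(a)$, so $y \in X_a$ and we are in case \ref{prop:dichotomya1}. In the other case $|w_n| \to \infty$, the finite words $w_n y$ converge right-aligned to a biinfinite $\tilde{y} \in {}^{\omega}\!\mathcal{A}^{\omega}$ of which $y$ is the right tail. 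The finitely many possible values of $(a_n, u_n)$ allow me, via pigeonhole, to extract a subsequence $n_k \to \infty$ with $(a_{n_k}, u_{n_k}) = (a, u)$ constant, where $u$ is a nonempty proper prefix of $\varphi(a)$ (nonemptiness achievable since $|w_n| \to \infty$); writing $\varphi(a) = u s$, the nonempty suffix $s$ begins with a fixed letter $c := a_{n_k - 1}$, determined purely by $a$ and $u$.

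If $c = a$, write $\varphi(a) = u a z$. The crucial technical step is to use the idempotency of $\lambda_\varphi$ (Definition \ref{def:idemsub}\eqref{lem:propofsubs3}) to replace $a$ by $\lambda_\varphi(a)$ and locate the corresponding occurrence so that the resulting $z$ contains only letters $\prec a$. Unfolding the recursion $y^{(n_k - 1)} = a z \cdot \varphi(y^{(n_k)}_{[1,\infty)})$ via compactness and iteration then identifies the right half of $\tilde{y}$ with $a z \varphi(z) \varphi^2(z)\cdots$ and the left half with $\cdots \varphi^2(u) \varphi(u) u$, matching the biinfinite form of case \ref{prop:dichotomyb1} (and collapsing to case \ref{prop:dichotomya2} if $z = \epsilon$). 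If instead $c \neq a$, I iterate the pigeonhole/compactness argument one level deeper on $y^{(n_k - 1)}$, now starting with $c$: extracting a sub-subsequence so that $u_{n_k - 1}$ also stabilizes, and continuing inductively, eventually one shows via idempotency that $a$ is backwards prolongable, $c$ is prolongable, $\tilde{y} = {}^{\omega}\!\varphi(a)\,\varphi^\omega(c)$, and $ac \in \mathcal{L}(X_\varphi)$, placing us in case \ref{prop:dichotomya2}.

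For the second part, assume $Y \neq X_b$ for every $b \in \mathcal{A}$. If the above analysis only landed in case \ref{prop:dichotomya1}, then $y \in X_a$ with $Y \subsetneq X_a$; a further application of the construction to $y$, now viewed inside the smaller system $X_a$, forces the appearance of the biinfinite structure and yields \ref{prop:dichotomyb1} or \ref{prop:dichotomyb2}. The principal obstacle throughout is the $c = a$ sub-case: ensuring the \emph{strict} condition that $w_a$ consists entirely of letters $\prec a$ is not automatic from the raw decomposition $\varphi(a) = u a z$, and depends on choosing the distinguished letter $\lambda_\varphi(a)$ (which is a fixed point of $\lambda_\varphi$ precisely because $\varphi$ is idempotent) so that the selected occurrence of $a$ inside $\varphi(a)$ is the last one among letters equivalent to $a$, thereby forcing everything after it to be strictly smaller in the preorder $\succcurlyeq$.
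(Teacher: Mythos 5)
Your overall strategy (iterated desubstitution via Lemma \ref{lem:prefix}, pigeonhole on the finitely many pairs $(a_n,u_n)$, and stabilisation via idempotency of $\alpha_\varphi$ and $\lambda_\varphi$) is the same as the paper's, but your case split is taken at the wrong place, and this creates two genuine gaps. The paper distinguishes between ``$|u_i|=|\varphi(a_{i+1})|-1$ for all large $i$'' (the desubstituted point always starts at the \emph{final} letter of $\varphi(a_{i+1})$, which is exactly what produces the ${}^{\omega}\!\varphi(a)\varphi^{\omega}(c)$ structure) and its negation (which gives $y\in X_a$, because the two-letter prefix of $y^i$ then sits inside $\varphi(a_{i+1})$ infinitely often, so arbitrarily long prefixes of $y$ are factors of the words $\varphi^{i+1}(a_{i+1})$). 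You instead split on whether $u_n=\epsilon$ eventually, and in your Case B with $c\neq a$ you claim to reach \ref{prop:dichotomya2}. That conclusion is false in general: for the (idempotent square of the) Thue--Morse substitution, a point $y$ whose desubstitution satisfies $0<|u_n|<|\varphi(a_n)|-1$ for all $n$ has $c\neq a$ along every constant subsequence, lies in $X_0$ (so \ref{prop:dichotomya1} holds), but is typically \emph{not} a suffix of any ${}^{\omega}\!\varphi(a)\varphi^{\omega}(c)$; no amount of extracting deeper sub-subsequences will change that. The repair is precisely the paper's observation that \ref{prop:dichotomya1} already holds whenever the desubstitution lands at a non-final position infinitely often, and that \ref{prop:dichotomya2} arises only in the complementary case.

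The second gap is in your $c=a$ branch and in your treatment of the second assertion. You cannot ``choose'' the occurrence of $a$ in $\varphi(a)=uaz$ so that it is the last occurrence of a letter equivalent to $a$: the prefix $u=u_{n_k}$ is dictated by where $y^{(n_k-1)}$ begins inside $\varphi(y^{(n_k)})$, and without the hypothesis $Y\neq X_b$ the word $z$ may genuinely contain letters $\sim a$, in which case \ref{prop:dichotomyb1} simply fails (only \ref{prop:dichotomya1} holds). The paper obtains the required structure by \emph{using} the hypothesis $Y\subsetneq X_a$: if $y^i$ had a letter equivalent to $a$ at a non-initial position for infinitely many $i$, then $\varphi^n(a)\in\mathcal{L}(y)$ for all $n$ and $Y=X_a$, a contradiction; hence $\lambda(a_{i+1})=a_i$ for all large $i$, idempotency of $\lambda$ forces $a_i=a$ with $\lambda(a)=a$, and \emph{that} is what guarantees $w_a\neq\epsilon$ consists only of letters $\prec a$ and that $y^{i_0}=aw_a\varphi(w_a)\varphi^2(w_a)\cdots$. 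Your alternative for the second part --- ``apply the construction again inside $X_a$'' --- does not close this gap, since the re-application may simply return $y\in X_{a'}$ with $a'\sim a$ and $X_{a'}=X_a$, making no progress.
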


\begin{proof}
Let $y^0=y$. Using Lemma \ref{lem:prefix}, we inductively construct for $i\geq 0$ letters $a_i$, sequences $y^i\in X_b$ with initial letters $a_i$, and  proper prefixes $u_i$ of  $\varphi(a_{i+1})$ such that \[\varphi(y^{i+1})=u_iy^i.\]  Note that since $a_i$ appears in $\varphi(a_{i+1})$, we have $a_{i+1} \succcurlyeq a_i$, and hence letters $a_i$ become equivalent for sufficiently large $i$. We will now show that $y$ satisfies one of the properties  \ref{prop:dichotomya1} and  \ref{prop:dichotomya2}.   We consider two cases.

{\bf Case I} (for infinitely many $i$ the length of $u_i$ is strictly smaller than $|\varphi(a_{i+1})|-1$). In this case for infinitely many $i$ the prefix of $y^i$ of length $2$ is a factor of $\varphi(a_{i+1})$. Let $a$ be a letter that occurs infinitely many times among $a_i$. Then $a$ is ample. Since $a$ is the initial letter of some $y^i$, it lies in  $ \mathcal{L}(X_{\varphi})$. Since $\varphi$ is growing and $\varphi(y^{i+1})=u_i y^i$, it follows that every prefix of $y$ is a factor of $\varphi^i(a)$ for some $i \geq 0$. In particular,  $y\in X_a$ and hence property \ref{prop:dichotomya1}                                                                                                                                                                                                                                                                                                                                                                                                                                                                                                                                                                                                                                                                                                                                                                                                                                                                                                                                                                                                                                                                                                                                                                                                                                                                                                                                                                                                                                                                                                                                                                                                                                                                                                                                                                                                                                                                                                                                                                                                                                                                                                                                                                                                                                                                                                                                                                                                                                                                                                                                                                                                                                                                                                                                                                                                                                                                                                                                                                                                                     holds.

{\bf Case II} (for all sufficiently large $i$ the length of $u_i$ is equal to $|\varphi(a_{i+1})|-1$). Let $i_0$ be such that we have $|u_i|=|\varphi(a_{i+1})|-1$ and $a_{i+1} \sim a_i$ for all $i\geq i_0$. Take $i\geq i_0$. Then $a_i$ is the final letter of $\varphi(a_{i+1})$, which implies that $a_{i+1}$ is ample and $\lambda(a_{i+1})=a_i$. Since the map $\lambda$ is idempotent, we have $a_{i+1}=a_i$ for all $i\geq i_0$. Put $a=a_{i_0}$, and note that $a$ is backwards prolongable. 

For $i\geq i_0$ the sequence $T(y^i)$ is the image of $T(y^{i+1})$ by $\varphi$. Iterating this for $i\geq i_0$, we see that for each $n\geq 0$ and $d=T(y^{i_0+n})_0$ the word $\varphi^n(d)$ is a prefix of $T(y^{i_0})$. Choose some letter $d$ that arises in this manner for infinitely many $n$, and put $c=\varphi(d)_0$. Since $\varphi$ is idempotent, $c$ is prolongable, and the assumption on $d$ shows that $T(y^{i_0})=\varphi^{\omega}(c)$. This shows that $y^{i_0}=a\varphi^{\omega}(c)$ and in particular $ac\in \mathcal{L}(X_{\varphi})$. Since $y$ is a suffix of $\varphi^{i_0}(y^{i_0})$, it is also a suffix of ${}^{\omega}\!\varphi(a)\varphi^{\omega}(c)$ and property  \ref{prop:dichotomya2} holds. This ends the proof of the first assertion.

Now assume that $Y$ is different from each $X_b$ for $b \in\mathcal{A}$. To show the second claim, we only need to treat Case I. We will show that in this case $y$ satisfies property \ref{prop:dichotomyb1}. (Recall that the conditions \ref{prop:dichotomyb2} and \ref{prop:dichotomya2} are the same.) As in the reasoning above, let $a$ be a letter that occurs infinitely many times among $a_i$, and recall that $a$ is ample, $a\in \mathcal{L}(X_{\varphi})$ and $y \in X_a$, whence $Y \subsetneq X_a$.

We claim that for sufficiently large $i$ the sequence $y^i$ contains no letters equivalent to $a$ at non-initial positions. Indeed, if $y^i$ contains a letter $c \sim a$ at a non-initial position, then $y$ contains $\varphi^i(c)$. If this happened for infinitely many $i$, 
the word $\varphi^n(a)$ would appear in $y$ for each $n \geq 0$, contradicting the assumption that $Y$ is a proper subset of $X_c = X_a$.

Since for sufficiently large $i$ the letters $a_i$ are all equivalent (and hence ample) and since the sequence $y^i$ contains no letters equivalent to $a$ at non-initial positions, we have $\lambda(a_{i+1})=a_i$ for sufficiently large $i$. Since the map $\lambda$ is idempotent, the sequence $a_i$ is eventually constant with value $a$. It follows that $\lambda(a)=a$ and $\varphi(a)=v_aaw_a$ with $w_a$ nonempty (since we are in Case I) and containing only letters $c\prec a$.  

Choose $i_0\geq 0$ so that for $i\geq i_0$ we have $a_i=a$  and the sequence $y^i$ contains no letters equivalent to $a$ at non-initial positions. Since $\varphi(y^{i+1})=u_iy^i$, we must have  
\[ y^{i_0} = a w_a \varphi(w_a)\varphi^2(w_a)\cdots.
\]
Hence $y$ is a suffix of  \[\varphi^{i_0}(y^{i_0})=\varphi^{i_0-1}(v_a) \cdots \varphi^{2}(v_a) \varphi(v_a) v_a a w_a \varphi(w_a)\varphi^2(w_a)\cdots.\qedhere\] 
\end{proof}

\begin{corollary}\label{cor:dichotomy}
Let $\varphi\colon \mathcal{A}\rightarrow \mathcal{A}^{*}$ be an idempotent substitution. Let $y\in X_{\varphi}$ and let $Y$ be the orbit closure of $y$. Assume that $Y$ is different from each $X_b$ for $b\in \mathcal{A}$. Then $y$ is a substitutive sequence. If furthermore $\varphi$ is a substitution of constant length $k$, then $y$ is $k$-automatic.
\end{corollary}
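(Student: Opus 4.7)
The plan is to invoke Proposition~\ref{prop:dichotomy} to reduce $y$ to one of two explicit forms, each of which is visibly substitutive (and $k$-automatic in the constant length case) modulo prepending a finite word and shifting. Since $Y$ differs from every $X_b$, the proposition produces either \ref{prop:dichotomyb1} ($y$ is a suffix of $\cdots \varphi^2(v_a)\varphi(v_a) v_a a w_a \varphi(w_a) \varphi^2(w_a)\cdots$ with $w_a\neq\epsilon$) or \ref{prop:dichotomyb2} ($y$ is a suffix of ${}^{\omega}\!\varphi(a)\varphi^{\omega}(c)$), and I will treat each case in turn.

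In case \ref{prop:dichotomyb1}, I would observe that since $y$ is a one-sided infinite sequence whose starting index inside the biinfinite sequence is some finite integer, only finitely many letters from the left half can occur in $y$. Hence $y$ may be written as a finite (possibly empty) word prepended to some shift of $aw_a\varphi(w_a)\varphi^2(w_a)\cdots$. Since $w_a$ is nonempty, Lemma~\ref{lem:morvarphicseq}\eqref{lem:morvarphicseq1} gives that $w_a\varphi(w_a)\varphi^2(w_a)\cdots$ is substitutive; prepending a finite word and shifting preserves substitutivity (as recalled in Section~\ref{sec:preliminaries}), so $y$ is substitutive. In case \ref{prop:dichotomyb2}, the sequence $\varphi^{\omega}(c)$ is purely substitutive (being a fixed point of $\varphi$), and the same finite-modification argument applies: $y$ is $\varphi^{\omega}(c)$ with a finite word drawn from ${}^{\omega}\!\varphi(a)$ prepended, up to a shift, hence $y$ is substitutive.

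For the constant length case the argument is identical, with Lemma~\ref{lem:morvarphicseq}\eqref{lem:morvarphicseq2} providing $k$-automaticity of $w_a\varphi(w_a)\varphi^2(w_a)\cdots$ in case \ref{prop:dichotomyb1}, and with $\varphi^{\omega}(c)$ being purely $k$-automatic in case \ref{prop:dichotomyb2}. Overall, the corollary is essentially a bookkeeping consequence of the dichotomy of Proposition~\ref{prop:dichotomy} together with Lemma~\ref{lem:morvarphicseq}; the only conceptual point worth flagging is that the starting index of $y$ in the biinfinite sequence of \ref{prop:dichotomyb1} is a finite integer, so only a bounded amount of left-half data is prepended before the substitutive tail produced by Lemma~\ref{lem:morvarphicseq} takes over. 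The genuine technical content has already been absorbed into Proposition~\ref{prop:dichotomy} and Lemma~\ref{lem:morvarphicseq}, so no serious obstacle remains at this stage.
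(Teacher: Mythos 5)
Your proposal is correct and follows essentially the same route as the paper: the paper's proof is precisely the observation that Proposition \ref{prop:dichotomy} reduces $y$ to a suffix of one of the two biinfinite forms, Lemma \ref{lem:morvarphicseq} handles the substitutive (resp.\ $k$-automatic) tail, and closure under forward/backward shifts and finite modifications finishes the argument. Your explicit remark that the starting index in the biinfinite sequence is finite, so only a bounded left-half prefix is prepended, is exactly the bookkeeping the paper leaves implicit.
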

\begin{proof}
This follows immediately from Proposition \ref{prop:dichotomy}, Lemma \ref{lem:morvarphicseq} and the fact that substitutive (resp.,\ $k$-automatic) sequences are closed under backward and forward shifting.
\end{proof}

The next proposition characterises systems  $X_{\varphi}$ that are transitive.  

\begin{proposition} \label{prop:subsystemsXb}
Let $\varphi\colon \mathcal{A}\rightarrow \mathcal{A}^{*}$ be a substitution. Let $n\geq 1$ be such that $\varphi^n$ is idempotent. The following conditions are equivalent:
\begin{enumerate}
\item \label{prop:subsystemsXbi} $X_{\varphi}$ is transitive,
\item \label{prop:subsystemsXbii} $X_{\varphi}=X_{\varphi^n, b}$ for some letter $b\in \mathcal{A}$ that is either prolongable under $\varphi^n$ or very ample.
\end{enumerate}
Moreover, if $X_{\varphi}$ is transitive, then it is substitutive. Furthermore, if  $\varphi$ is a substitution of constant length $k$, then $X_{\varphi}$ is  $k$-automatic.
\end{proposition}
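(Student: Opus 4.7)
The plan is to establish (ii) $\Rightarrow$ (i) by producing an explicit substitutive sequence $y$ with $\overline{\Orb(y)} = X_\varphi$; this will simultaneously yield transitivity together with the ``moreover'' assertions on substitutivity and $k$-automaticity. For (i) $\Rightarrow$ (ii), I would then apply Proposition~\ref{prop:dichotomy} to a dense-orbit point of $X_\varphi$ and refine the resulting letter until it becomes prolongable under $\varphi^n$ or very ample.

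For (ii) $\Rightarrow$ (i), I would treat the two cases for $b$ separately. If $b$ is prolongable under $\varphi^n$, the natural candidate is the fixed point $y = \varphi^{n\omega}(b)$: it is purely substitutive, and in the constant length $k$ case purely $k^n$-automatic, hence $k$-automatic. If $b$ is very ample, I would decompose $\varphi^n(b) = v b w$ where $v$ is the (possibly empty) prefix up to the first occurrence of $b$; very-ampleness forces $w \neq \epsilon$ since $w$ contains at least one further occurrence of $b$. I would then set $y = b w \varphi^n(w) \varphi^{2n}(w) \cdots$, which is substitutive ($k$-automatic when $\varphi$ is of constant length $k$) by Lemma~\ref{lem:morvarphicseq} applied to $\varphi^n$ and $w$, with the initial $b$ absorbed via the stability of these classes under finite modifications. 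The equality $\overline{\Orb(y)} = X_{\varphi^n, b}$ reduces to showing $\mathcal{L}(y) = \mathcal{L}(X_{\varphi^n, b})$: the prefixes of $y$ are the right-hand halves $b w \varphi^n(w) \cdots \varphi^{(k-1)n}(w)$ of $\varphi^{kn}(b) = \varphi^{(k-1)n}(v) \cdots v b w \cdots \varphi^{(k-1)n}(w)$, and since $w$ contains $b$, the full word $\varphi^{kn}(b)$ itself appears as a factor of $\varphi^{kn}(w)$, hence as a factor of $y$.

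For (i) $\Rightarrow$ (ii), let $x \in X_\varphi$ have dense orbit; by Lemma~\ref{lem:nthpower}, $x \in X_{\varphi^n}$. Applying Proposition~\ref{prop:dichotomy} to $x$ with the idempotent substitution $\varphi^n$, either (a.i) $x \in X_{\varphi^n, a}$ for some $a \in \mathcal{L}(X_\varphi)$, or (a.ii) $x$ is a suffix of ${}^\omega\varphi^n(a') \varphi^{n\omega}(c)$. In case (a.i), density forces $X_\varphi = X_{\varphi^n, a}$, and $a$ must be ample since otherwise no length-$2$ factor starting with $a$ is admissible, contradicting the existence of some sequence in $X_{\varphi^n, a}$ containing $a$. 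In case (a.ii), either $x$ is already a suffix of $\varphi^{n\omega}(c)$, in which case $b = c$ is prolongable and we are done; or $x$ contains $a'$, and I would choose $e$ such that $a'c$ appears as a factor of some $\varphi^{nm}(e)$, forcing $X_\varphi = X_{\varphi^n, e}$, then reduce $e$ along the preorder $\preccurlyeq$ to an ample letter $b$ with $X_\varphi = X_{\varphi^n, b}$.

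The main obstacle will be to upgrade ampleness to prolongability or very-ampleness. Suppose, for contradiction, that the ample letter $b$ produced above is neither prolongable under $\varphi^n$ nor very ample, so $\varphi^n(b) = u b v$ with $u \neq \epsilon$ and $b$ appearing exactly once; idempotency condition~\eqref{lem:propofsubs1'} propagates this to every $\varphi^{nk}(b)$, and a short check using the same condition shows that $u$ and $v$ contain no letter equivalent to $b$ (else applying $\varphi^n$ would reintroduce $b$ and spoil uniqueness). Consequently, every sequence in $X_{\varphi^n, b}$ contains $b$ at most once, and for each $k \geq 0$ there is at most one sequence in $X_\varphi$ with $b$ at position $k$. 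But the orbit of $x$ can contain $b$ in only finitely many of its shifts (namely those obtained before the unique occurrence of $b$ in $x$ has been shifted off), so $\overline{\Orb(x)}$ misses all sequences with $b$ at position $k$ for $k$ sufficiently large, contradicting $\overline{\Orb(x)} = X_\varphi$. Hence $b$ is prolongable under $\varphi^n$ or very ample, completing the argument.
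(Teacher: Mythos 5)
Your proof of (\ref{prop:subsystemsXbii}) $\Rightarrow$ (\ref{prop:subsystemsXbi}) together with the ``moreover'' clause is correct and essentially identical to the paper's (the sequence $w\varphi^n(w)\varphi^{2n}(w)\cdots$, Lemma \ref{lem:morvarphicseq}, density via $b$ occurring in $w$). In the converse direction, your treatment of case (a.i) of Proposition \ref{prop:dichotomy} is also sound: the letter $a$ there is forced to be ample with $a\in\mathcal{L}(X_{\varphi^n,a})$, which rules out a unique final-position occurrence, and your counting contradiction then works --- except that you only prove there is \emph{at most} one point of $X_{\varphi^n,a}$ with $a$ at position $k$, whereas the contradiction needs such points to \emph{exist} for arbitrarily large $k$. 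That existence requires constructing them as limits of suffixes of $\varphi^{nm}(a)=\varphi^{n(m-1)}(u)\cdots u\,a\,v\cdots\varphi^{n(m-1)}(v)$ (the paper's suffixes of the biinfinite word $\cdots\varphi(u)u\,a\,v\varphi(v)\cdots$), using $u,v\neq\epsilon$; this is fillable but must be said.

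The genuine gap is in case (a.ii) with $x=u'\varphi^{n\omega}(c)$, $u'\neq\epsilon$: the step ``reduce $e$ along the preorder $\preccurlyeq$ to an ample letter $b$'' has no justification and cannot be carried out by the obvious means. The decomposition $X_{\varphi^n,e}=\bigcup_{a\in\mathcal{A}_e}X_{\varphi^n,a}$ contains the term $a=e$, so when $e$ is not ample it yields no strictly smaller letter; a long prefix of $x$ occurring in $\varphi^{nm}(e)$ may straddle several blocks $\varphi^{n(m-1)}(a)$, so one cannot descend to a single $a\prec e$. Ruling out the possibility that every letter $e$ with $X_{\varphi}=X_{\varphi^n,e}$ is non-ample (or ample only through a final-position occurrence --- a case your final paragraph also cannot handle, since then $b\notin\mathcal{L}(X_{\varphi^n,b})$, no point of $X_{\varphi}$ contains $b$, and there is nothing to contradict) is precisely the hard part of the paper's proof. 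The paper's Case II argument does this: since $ac\in\mathcal{L}(X_{\varphi})$, all suffixes of ${}^{\omega}\varphi(a)\varphi^{\omega}(c)$ lie in $X_{\varphi}=\overline{\Orb(y)}$ with $y=u\varphi^{\omega}(c)$; only finitely many shifts $T^i(y)$ have $i<|u|$, and the sequences $\varphi^{m}(a)\varphi^{\omega}(c)$ are pairwise distinct, so for all large $m$ the approximating shifts satisfy $i\geq|u|$ and hence $\varphi^{m}(a)\varphi^{m}(c)$ is a factor of $\varphi^{\omega}(c)$; this forces $X_{\varphi}=X_{\varphi^n,c}$ with $c$ prolongable. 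Nothing in your sketch plays the role of this argument, and without it the proof of (\ref{prop:subsystemsXbi}) $\Rightarrow$ (\ref{prop:subsystemsXbii}) is incomplete.
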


\begin{proof} Note first that under either of the assumptions \eqref{prop:subsystemsXbi} and  \eqref{prop:subsystemsXbii} we have $X_{\varphi}=X_{\varphi^n}$ (in the former case by Lemma \ref{lem:nthpower}, in the  latter case it is obvious). Hence, we may assume that $\varphi$ itself is idempotent and $n=1$.

We first show that \eqref{prop:subsystemsXbii} implies both \eqref{prop:subsystemsXbi} and the final claim. If $b$ is prolongable, then $X_\varphi = X_b$ is the orbit closure of $\varphi^\omega(b)$, from which all the remaining claims follow easily. Suppose that this is not the case. Then $b$ appears at least twice in $\varphi(b)$ and we can write $\varphi(b)=vbw$, where $v,w$ are words such that $b$ appears in $w$. The word $x_n=w\varphi(w)\cdots\varphi^n(w)$ is a suffix of $\varphi^{n+1}(b)$ and hence the sequence $x=w\varphi(w)\varphi^2(w)\cdots$ lies in $X_b$. Since $\varphi^n(b)$ is a factor of $x$ for all $n\geq 0$, the orbit of $x$ is dense in $X_b$. The sequence $x$ is substitutive by Lemma \ref{lem:morvarphicseq}. Furthermore, if $\varphi$ is a substitution of constant length $k$, then $x$ is $k$-automatic. 

It remains to prove that \eqref{prop:subsystemsXbi} implies \eqref{prop:subsystemsXbii}. Assume that $X_{\varphi}$ is transitive and let $y$ be a point in $X_{\varphi}$ with a dense orbit.    Since $X_{\varphi}=\bigcup_{b \in\mathcal{A}} X_{b}$, there exists $b \in \mathcal{A}$ such that $X_{\varphi}=X_b$; pick a minimal $b$ with this property. Suppose that $b$ is not prolongable  and not very ample. It means that $b$ appears in $\varphi(b)$ at most once, and at a non-initial position. 

{\bf Case I} ($b$ appears in $\varphi(b)$ exactly once, and at a non-initial and non-final position). Write $\varphi(b)=vbw$ for nontrivial words $v,w \in \mathcal{A}^*$. Since $\varphi$ is idempotent, by property \eqref{lem:propofsubs1'} in Definition \ref{def:idemsub} every word \[\varphi^n(b)=\varphi^{n-1}(v) \cdots \varphi(v) v b w \varphi(w) \cdots \varphi^{n-1}(w)\] contains exactly one occurrence of $b$, and hence every point in $X_b$ contains at most one occurrence of $b$. On the other hand, every suffix of the biinfinite sequence $\cdots \varphi^2(v) \varphi(v) v b w \varphi(w) \varphi^2(w) \cdots$ lies in $X_b$, and hence $X_b$ contains infinitely many points in which $b$ appears. It follows that $X_b$ is not transitive.

{\bf Case II} (either $b$ does not appear in $\varphi(b)$ or appears only at the final position). In this case $b\notin \mathcal{L}(X_b)$ and for all $a\in \mathcal{A}_b$ different from $b$ we have $a\prec b$. Applying  Proposition \ref{prop:dichotomy} to the system $X_b$, we see that either $y\in X_a$ for some $a\prec b$ or $y$ is a suffix of ${}^{\omega}\!\varphi(a)\varphi^{\omega}(c)$ for some backwards prolongable letter $a$ and prolongable letter $c$ such that $ac\in \mathcal{L}(X_b)$. The first case implies that $X_{\varphi}=X_a$ and contradicts the choice of $b$. In the second case $ac\in \mathcal{L}(X_b)$ implies that all suffixes of ${}^{\omega}\!\varphi(a)\varphi^{\omega}(c)$  lie in $X_b$. Since the orbit of $y$ is dense in $X_b$, for each $n\geq 1$ the sequence $\varphi^{n}(a)\varphi^{\omega}(c)$ has arbitrarily long prefixes in common with some forward shift of $y$. Since $\varphi^n(a)$ is a suffix of $\varphi^{n+1}(a)$ for each $n\geq 0$ and $y$ has the form $y=u\varphi^{\omega}(c)$ for some finite word $u$, all $\varphi^n(a)\varphi^n(c)$ are in fact factors of $\varphi^{\omega}(c)$. Letting $n$ tend to infinity, we conclude that all suffixes of ${}^{\omega}\!\varphi(a)\varphi^{\omega}(c)$ lie in $X_c$. In particular, $y\in X_c$, which again contradicts the choice of $b$. This ends the proof.\end{proof}

\begin{proof}[Proof of Theorem \ref{mainthmB}] The claim follows from Proposition \ref{prop:dichotomy} and the proof of Proposition \ref{prop:subsystemsXb}. More precisely, we choose $\tau$ to be a power of $\varphi$ such that $\tau$ is idempotent, and construct the required words $a,b,v,w$ and the sequence $y$ as in Proposition \ref{prop:dichotomy}(b) if the subsystem $Y$ is different from $X_{\tau,c}$ for all $c\in \mathcal{A}$ and as in the proof of Proposition \ref{prop:subsystemsXb} otherwise. (In the former case the claim holds for every $y$ whose orbit closure is $Y$.) \end{proof}

\begin{proof}[Proof of Theorem \ref{thm:main}]
The claim for transitive subsystems of systems of the form $X=X_{\varphi}$ for an idempotent substitution $\varphi\colon \mathcal{A} \to \mathcal{A}^*$ follows immediately from Corollary \ref{cor:dichotomy} and Proposition \ref{prop:subsystemsXb}.

 In general, if $Y$ is a transitive subsystem of a substitutive system $X$, we consider $X$ as a topological factor $X=\pi(X_{\varphi})$ of some  $X_{\varphi}$ for an idempotent substitution $\varphi$ and a coding $\pi$. Choose $y \in Y$ such that $Y=\overline{\Orb(y)}$ and let $z\in X_{\varphi}$ be such that $\pi(z)=y$. Put $Z=\overline{\Orb(z)}$.  By compactness we have  $\pi(Z)=Y$. Since $Z$ is a transitive subsystem of $X_{\varphi}$, it is substitutive, and hence so is the system $Y=\pi(Z)$. A similar argument proves the claim concerning $k$-automatic systems.
\end{proof}

\subsection*{Two-sided substitutive shifts and their subsystems}

We close this section with the remark that the results formulated above have their analogues for two-sided shifts.  For a substitution $\varphi\colon \mathcal{A}\rightarrow \mathcal{A}^*$ let $ X_\varphi^{\Z}$  denote the two-sided dynamical system generated by  $\varphi$, i.e.\
\[
X_{\varphi}^{\Z}=\{z\in {}^{\omega}\!\mathcal{A}^\omega \mid \text{ every factor of } z  \text{ appears in }\varphi^n(a) \text{ for some } n\geq 0 \textrm{ and } a\in\mathcal{A}\}.
\]
For a letter $a$, the system $X_a^{\Z}$ is defined accordingly.  A sequence $y=(y_n)_{n} \in {}^{\omega}\!\mathcal{A}^\omega$ is \textit{substitutive} if both $(y_n)_{n\geq 0}$ and $(y_n)_{n<0}$ are substitutive as one-sided sequences. This is obviously the same as saying that all (one-sided) prefixes and suffixes of $y$ are substitutive. Let $T$ denote the shift map on  ${}^{\omega}\!\mathcal{A}^\omega$. For two-sided systems we consider the two-sided orbit ${\Orb}^{\Z}(y) = \{T^n(y) \mid n\in \Z\}$ of a point $y$. A \emph{two-sided substitutive system} is the (two-sided) orbit closure of a two-sided substitutive sequence.  We define a two-sided $k$-automatic sequence and a two-sided $k$-automatic system in the same way. The main results for two-sided shifts are the same as or simpler than for the one-sided ones. The proofs are mutatis mutandis the same, and we present them in a briefer manner. The most notable difference between two-sided and one-sided shifts is that in the two-sided case every substitutive system has only finitely many subsystems.

\begin{theorem}\label{thm:main-Z}
Every transitive subsystem of a two-sided substitutive system is substitutive. Every transitive subsystem of a two-sided $k$-automatic system is $k$-automatic.
\end{theorem}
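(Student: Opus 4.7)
The plan is to mirror the proof of Theorem \ref{thm:main} in the two-sided setting, exploiting the fact that a biinfinite sequence $y=(y_n)_{n\in\Z}$ is two-sided substitutive precisely when each of its two one-sided halves is substitutive (and analogously for $k$-automaticity). First I would reduce to the case $Y\subseteq X_\varphi^{\Z}$ for an idempotent substitution $\varphi\colon\mathcal{A}\to\mathcal{A}^*$, by writing a general two-sided substitutive system as a topological factor of such an $X_\varphi^{\Z}$ under a coding and invoking the two-sided analogue of Lemma \ref{lem:factor}; Lemma \ref{lem:propofsubs} allows us to pass to an idempotent power of $\varphi$ without changing the system.

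Next I would establish a two-sided analogue of Lemma \ref{lem:prefix}: every $z\in X_\varphi^{\Z}$ can be written as $T^{j}\varphi(z')$ for some $z'\in X_\varphi^{\Z}$ and some integer $0\leq j<|\varphi(z'_0)|$. The proof is essentially the same as the one-sided one: pick longer and longer central factors of $z$, desubstitute each one inside some $\varphi^k(b)$, and extract a convergent subsequence using compactness of $\mathcal{A}^{\Z}$ and finiteness of the possible $j$'s. Given $y\in Y$ with dense two-sided orbit in $Y$, I would then iterate this construction to produce letters $a_i$, shifts $j_i$, and sequences $y^i\in X_\varphi^{\Z}$ with $y=T^{j_1+\cdots+j_i}\varphi^i(y^i)$ and $a_i=(y^i)_0$, exactly paralleling the iteration in the proof of Proposition \ref{prop:dichotomy}.

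The idempotency of the relevant maps (the set-valued map $S_\varphi$, the initial- and final-letter maps, and $\lambda_\varphi$) would then force a two-sided dichotomy. Either (a) for some ample letter $a\in\mathcal{L}(X_\varphi^{\Z})$ every factor of $y$ appears in some $\varphi^n(a)$, so that $Y\subseteq X_a^{\Z}$ and, by the argument in Proposition \ref{prop:subsystemsXb}, $Y=X_a^{\Z}$ is itself generated by a biinfinite sequence of the form ${}^{\omega}\!\varphi(a')\varphi^{\omega}(c')$ for suitable prolongable/backwards-prolongable letters $a',c'$ with $a'c'\in\mathcal{L}(X_\varphi^{\Z})$; or (b) there is a letter $a$ with $\varphi(a)=vaw$, both $v$ and $w$ containing only letters $\prec a$, such that $y$ is a shift of the biinfinite sequence
\[
\cdots\varphi^2(v)\varphi(v)va w\varphi(w)\varphi^2(w)\cdots.
\]
In either case both one-sided halves of $y$ are of the form $u\varphi(u)\varphi^2(u)\cdots$ (or its mirror image) for some finite word $u$, so by applying Lemma \ref{lem:morvarphicseq}\eqref{lem:morvarphicseq1} to each half we conclude that $y$ is two-sided substitutive, and by Lemma \ref{lem:morvarphicseq}\eqref{lem:morvarphicseq2} two-sided $k$-automatic when $\varphi$ is of constant length $k$. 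Hence $Y=\overline{\Orb^{\Z}(y)}$ is a two-sided substitutive (resp., $k$-automatic) system.

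The main obstacle I expect is the bookkeeping of Step~2 in the two-sided setting: one must track both the forward and backward growth of $\varphi^i(y^i)$ around the origin and verify that the iterative desubstitution produces consistent data on both sides. The compactness of $\mathcal{A}^{\Z}$ and the fact that, by the idempotency of $\lambda_\varphi$ and of its ``initial-letter'' counterpart, the sequences of first and last letters of the $y^i$ stabilise simultaneously, should however make this essentially routine; in particular, because two-sided substitutive systems have only finitely many subsystems, no further phenomena beyond those of case (a) and case (b) can arise.
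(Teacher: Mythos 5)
Your proposal follows essentially the same route as the paper: realise the system as a coding image of a transitive $X_\varphi^{\Z}$ with $\varphi$ idempotent (the paper's Lemma \ref{lem:twosidquot} carries out the gluing of the two one-sided substitutions that you leave implicit), prove a two-sided desubstitution lemma (Lemma \ref{lem:prefixinv}), iterate it to obtain the dichotomy of Proposition \ref{prop:dichotomy-Z}, and conclude via Lemma \ref{lem:morvarphicseq} and Proposition \ref{prop:subsystemsXb-Z}. The only slip is cosmetic: in the paper the generator ${}^{\omega}\!\varphi(a')\varphi^{\omega}(c')$ arises when no suitable (very) ample letter is available, while $Y=X_a^{\Z}$ with $a$ very ample is generated by $\cdots\varphi(v)\,v\,a\,w\,\varphi(w)\cdots$ — you have the two forms interchanged between your cases (a) and (b), but both forms are identified and both are handled identically by your final argument.
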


To prove this result, we first state three lemmas, which are analogous to the previously described results for one-sided systems. 
\begin{lemma}\label{lem:twosidquot} Every two-sided substitutive system $X$ arises as the image $X=\pi(X_{\varphi}^{\Z})$ of a transitive system $X_{\varphi}^{\Z}$ generated by a substitution $\varphi\colon \mathcal{A} \to \mathcal{A}^*$  via a coding $\pi\colon \mathcal{A}\to \mathcal{B}$. If $X$ is $k$-automatic, we may choose $\varphi$ to be of constant length $k$.\end{lemma}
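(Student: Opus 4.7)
The plan is to choose $y\in X$ with dense two-sided orbit, express its one-sided halves $y^+=(y_n)_{n\geq 0}$ and $y^-=(y_n)_{n<0}$ as codings of fixed points of substitutions, and glue these substitutions together via a fresh bridge letter so that the concatenated biinfinite fixed point generates the entire system.

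Since $y$ is two-sided substitutive, I would first produce substitutions $\varphi_\pm\colon \mathcal{A}_\pm\to \mathcal{A}_\pm^*$ on disjoint alphabets, a prolongable letter $c\in\mathcal{A}_+$ under $\varphi_+$, a backwards prolongable letter $a\in\mathcal{A}_-$ under $\varphi_-$, and codings $\pi_\pm\colon \mathcal{A}_\pm\to\mathcal{B}$ with $y^+=\pi_+(\varphi_+^\omega(c))$ and $y^-=\pi_-({}^\omega\!\varphi_-(a))$; in the $k$-automatic case I would arrange for $\varphi_\pm$ to be of constant length $k$. Next, I would introduce a new letter $\sigma$, set $\mathcal{A}=\mathcal{A}_+\sqcup\mathcal{A}_-\sqcup\{\sigma\}$, let $\varphi$ agree with $\varphi_\pm$ on $\mathcal{A}_\pm$, and define $\varphi(\sigma)\in(\mathcal{A}_+\cup\mathcal{A}_-)^*$ to contain $ac$ as a factor: $\varphi(\sigma)=ac$ suffices in the general case, while in the $k$-automatic case I would take $\varphi(\sigma)=a\,u$, where $u$ is the length-$(k-1)$ prefix of $\varphi_+(c)$, so that $|\varphi(\sigma)|=k$.

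Put $z={}^\omega\!\varphi_-(a)\,\varphi_+^\omega(c)$; this is a two-sided fixed point of $\varphi$. The key observation is that, since $u$ is a prefix of $\varphi_+(c)$, the word $\varphi_+^{n-1}(u)$ is a prefix of $\varphi_+^n(c)$, and hence $\varphi^n(\sigma)=\varphi_-^{n-1}(a)\,\varphi_+^{n-1}(u)$ is literally the window of $z$ stretching from position $-|\varphi_-^{n-1}(a)|$ to $|\varphi_+^{n-1}(u)|-1$ (and similarly in the general case with $u=c$). This identification shows simultaneously that every factor of $z$ appears in some $\varphi^n(\sigma)$ and, conversely, that every factor of $\varphi^n(\sigma)$ is a factor of $z$, so $X_{\varphi,\sigma}^\Z=\overline{\Orb^\Z(z)}$, in particular transitive. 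Restricting $\varphi$ to the sub-alphabet $\mathcal{A}_\sigma$ of letters reachable from $\sigma$ yields a substitution $\varphi'$ (still of constant length $k$ when applicable) with $X_{\varphi'}^\Z=X_{\varphi,\sigma}^\Z$, and extending $\pi_\pm$ to a coding $\pi'\colon \mathcal{A}_\sigma\to\mathcal{B}$ with $\pi'(\sigma)$ arbitrary (the value is irrelevant because $\sigma$ does not appear in $z$) gives $\pi'(z)=y$; by continuity and compactness $\pi'(X_{\varphi'}^\Z)=\overline{\Orb^\Z(y)}=X$.

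The main subtlety lies in the constant-length-$k$ case: any length-$k$ word containing $ac$ would make $z$ a fixed point in $X_\varphi^\Z$, but a careless padding could introduce spurious factors in $\varphi^n(\sigma)$ outside the language of $z$, strictly enlarging $X_{\varphi,\sigma}^\Z$ beyond $\overline{\Orb^\Z(z)}$ and forcing $\pi'(X_{\varphi'}^\Z)\supsetneq X$. The specific padding by the initial segment of $\varphi_+(c)$ eliminates this obstruction by making every $\varphi^n(\sigma)$ a contiguous window of $z$, so $X_{\varphi,\sigma}^\Z$ is pinned down exactly as the orbit closure of $z$.
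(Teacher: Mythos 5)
Your proposal is correct and follows essentially the same route as the paper: decompose the generating biinfinite sequence into its two one-sided substitutive halves, glue the corresponding substitutions with a fresh bridge letter $\sigma$ satisfying $\varphi(\sigma)=aw$ with $w$ a prefix of the forward fixed point (of length $k-1$ in the constant-length case), and observe that the iterates $\varphi^n(\sigma)$ are exactly growing windows of ${}^{\omega}\!\varphi_-(a)\varphi_+^{\omega}(c)$. Your extra care in restricting to the sub-alphabet reachable from $\sigma$ and in justifying why the padding introduces no spurious factors only makes explicit what the paper leaves as ``easy to see.''
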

\begin{proof} Let $X$ be a two-sided substitutive system arising as the orbit closure of a sequence $y=(y_n)_n$. Since  $(y_n)_{n\geq 0}$ and $(y_n)_{n<0}$ are one-sided  substitutive, we may find substitutions $\varphi_1 \colon \mathcal{A}_1\to \mathcal{A}_1^*$ and $\varphi_2 \colon \mathcal{A}_2\to \mathcal{A}_2^*$, codings $\pi_1 \colon \mathcal{A}_1 \to \mathcal{B}$ and $\pi_2 \colon \mathcal{A}_2 \to \mathcal{B}$, a prolongable letter $a_1\in \mathcal{A}_1$ and a backwards prolongable letter $a_2\in \mathcal{A}_2$ such that $y=\pi_2({}^{\omega}\!\varphi_2(a_2))\pi_1(\varphi_1^{\omega}(a_1))$. We may assume that $\mathcal{A}_1$ and $\mathcal{A}_2$ are disjoint. Define a new alphabet $\mathcal{A}=\mathcal{A}_1\cup\mathcal{A}_2 \cup \{\spadesuit\}$ with a new symbol  $\spadesuit \notin\mathcal{A}_1\cup\mathcal{A}_2$. Glue $\varphi_i$ and $\pi_i$ to maps $\varphi\colon \mathcal{A} \to \mathcal{A}^*$ and $\pi\colon\mathcal{A} \to \mathcal{B}$ by putting $\varphi|_{\mathcal{A}_i}=\varphi_i$, $\pi|_{\mathcal{A}_i}=\pi_i$, and $\varphi(\spadesuit)=a_2w$, where $w$ is a prefix of $\varphi^{\omega}(a_1)$ chosen to be of arbitrary length in the substitutive case and of length $k-1$ in the $k$-automatic case. It is easy to see that $X_{\varphi}^{\Z}$ is a transitive system generated by the sequence ${}^{\omega}\!\varphi_2(a_2)\varphi_1^{\omega}(a_1)$ and that $X=\pi(X_{\varphi}^{\Z})$.\end{proof}

The remaining two lemmas are two-sided analogues of Lemmas \ref{lem:prefix} and \ref{lem:nthpower}. The former of these lemmas is proven in \cite[Prop. 5.10]{BKM}.
\begin{lemma}\label{lem:prefixinv}
Let $\varphi\colon \mathcal{A}\rightarrow \mathcal{A}^*$ be a substitution and let $x\in X^{\Z}_{\varphi}$. There exists $y\in X^{\Z}_{\varphi}$ such that $x=T^{l}(\varphi(y))$ for some integer $l \in [0, |\varphi(y_0)|-1]$.
\end{lemma}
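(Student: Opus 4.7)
The plan is to adapt the compactness argument from the one-sided Lemma \ref{lem:prefix}. The new difficulty is that we must control both tails of $x$ simultaneously and, crucially, locate position $0$ correctly inside $\varphi(y_0)$.

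For each $n\geq 1$, I would use $x\in X_\varphi^{\Z}$ to find a finite word $a_{p_n}\cdots a_{q_n}$ (a factor of some iterate of $\varphi$) and an offset $l_n$ such that $x_{[-n,\,n]}$ coincides with the relevant window of $T^{l_n}(\varphi(a_{p_n}\cdots a_{q_n}))$, choosing the index $j_n\in\{p_n,\dots,q_n\}$ so that position $0$ of $x$ lies inside $\varphi(a_{j_n})$ at offset $l_n$. Re-centring at $j_n$ and padding arbitrarily on both sides, this produces a biinfinite sequence $y^{(n)}\in{}^{\omega}\!\mathcal{A}^{\omega}$ with $y^{(n)}_0=a_{j_n}$ and $T^{l_n}(\varphi(y^{(n)}))$ agreeing with $x$ on $[-n,\,n]$. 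Since $\varphi$ is growing we have $|\varphi(a)|\geq 1$ for every $a$, which forces both $j_n-p_n$ and $q_n-j_n$ to tend to infinity with $n$.

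Since the alphabet is finite and $l_n$ is bounded above by $\max_a|\varphi(a)|-1$, pigeonhole yields a subsequence on which $y^{(n)}_0$ and $l_n$ take constant values $c$ and $l$; compactness of ${}^{\omega}\!\mathcal{A}^{\omega}$ then extracts a further subsequence with $y^{(n)}\to y$ for some $y$ with $y_0=c$. Every finite factor of $y$ ultimately lies inside the finite part $a_{p_n}\cdots a_{q_n}$ of $y^{(n)}$ (by the growth observation above), so every factor of $y$ is a factor of some $\varphi^k(b)$ and hence $y\in X_\varphi^{\Z}$. Because $y^{(n)}_0=y_0=c$, the origins of $\varphi(y^{(n)})$ and $\varphi(y)$ align, so $\varphi(y^{(n)})\to\varphi(y)$ coordinate-wise; passing to the limit in the identity on $[-n,n]$ yields $x=T^l(\varphi(y))$ with $l\in[0,|\varphi(y_0)|-1]$. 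The main obstacle is the bookkeeping: I must pin down $y_0$, the offset $l$, and the bilateral convergence simultaneously, whereas the one-sided argument only had to stabilise one initial letter and one prefix.
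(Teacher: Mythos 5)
Your argument is correct and is precisely the two-sided adaptation of the compactness-and-pigeonhole proof of Lemma \ref{lem:prefix}, which is exactly what the paper intends (it only remarks that the proof is ``analogous'' and points to \cite[Prop. 5.10]{BKM}). One small slip in justification: the fact that $j_n-p_n$ and $q_n-j_n$ tend to infinity follows from the \emph{upper} bound $|\varphi(a)|\leq \max_{b\in\mathcal{A}}|\varphi(b)|<\infty$ (finiteness of the alphabet), so that covering the $n$ letters of $x$ on either side of position $0$ requires unboundedly many letters of $a_{p_n}\cdots a_{q_n}$ --- not from the lower bound $|\varphi(a)|\geq 1$ as you wrote.
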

\begin{lemma}\label{lem:nthpowerinv}
Let $\varphi\colon \mathcal{A}\rightarrow \mathcal{A}^*$ be a substitution. If $X^{\Z}_{\varphi}$ is transitive, then $X^{\Z}_{\varphi}=X^{\Z}_{\varphi^n}$ for any $n\geq 1$.
\end{lemma}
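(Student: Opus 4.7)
The plan is to mimic the proof of Lemma \ref{lem:nthpower} one-to-one, replacing the one-sided Lemma \ref{lem:prefix} by its two-sided counterpart Lemma \ref{lem:prefixinv}. Fix $n\geq 1$ and, for each $i \in [0,n-1]$, define
\[
X_i^{\Z} = \{x \in X_{\varphi}^{\Z} \mid \text{every factor of } x \text{ appears in } \varphi^k(b) \text{ for some } b\in \mathcal{A} \text{ and } k\equiv i \!\!\!\pmod n\}.
\]
Each $X_i^{\Z}$ is closed (its defining condition is an intersection of conditions on finite factors) and $T$-invariant (the shift does not change the set of factors of a biinfinite sequence), and by construction $X_{\varphi}^{\Z} = \bigcup_{i=0}^{n-1} X_i^{\Z}$ and $X_0^{\Z} = X_{\varphi^n}^{\Z}$.

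Because $X_{\varphi}^{\Z}$ is transitive, it contains a point whose two-sided orbit is dense; that point lies in some $X_j^{\Z}$, and since $X_j^{\Z}$ is closed and $T$-invariant we get $X_{\varphi}^{\Z} = X_j^{\Z}$. The core of the proof is then to promote membership in $X_j^{\Z}$ to membership in $X_{j+1 \bmod n}^{\Z}$. Take any $x \in X_{\varphi}^{\Z}$; by Lemma \ref{lem:prefixinv} there exist $y \in X_{\varphi}^{\Z} = X_j^{\Z}$ and $l \in [0, |\varphi(y_0)|-1]$ with $x = T^l(\varphi(y))$. Any factor of $x$ is a factor of $\varphi(y)$, hence is contained in $\varphi(v)$ for some factor $v$ of $y$; but $v$ is in turn a factor of $\varphi^k(b)$ for some $b \in \mathcal{A}$ and $k \equiv j \pmod n$, so that factor of $x$ appears in $\varphi^{k+1}(b)$. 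This shows $x \in X_{j+1 \bmod n}^{\Z}$, whence $X_{\varphi}^{\Z} \subseteq X_{j+1 \bmod n}^{\Z}$ and therefore equality holds.

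Iterating this step $n$ times gives $X_{\varphi}^{\Z} = X_i^{\Z}$ for every $i$, and in particular $X_{\varphi}^{\Z} = X_0^{\Z} = X_{\varphi^n}^{\Z}$, as required. There is no real obstacle here beyond verifying the closedness and invariance of the $X_i^{\Z}$ and checking that the two-sided decomposition $x = T^l(\varphi(y))$ from Lemma \ref{lem:prefixinv} plays exactly the same role in the factor-counting argument as the equation $ux = \varphi(y)$ in the one-sided case; both are routine since shifting by a bounded amount does not affect which words occur as factors.
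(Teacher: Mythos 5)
Your proof is correct and is precisely what the paper intends: the paper states that Lemma \ref{lem:nthpowerinv} is proven ``analogously'' to Lemma \ref{lem:nthpower}, with Lemma \ref{lem:prefixinv} playing the role of Lemma \ref{lem:prefix}, which is exactly the substitution you carry out. The key step --- that $x=T^l(\varphi(y))$ forces every factor of $x$ to appear in $\varphi^{k+1}(b)$ whenever the relevant factor of $y$ appears in $\varphi^k(b)$ --- goes through verbatim in the two-sided setting, as you note.
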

The proofs of these lemmas are analogous to those of  Lemmas \ref{lem:prefix} and \ref{lem:nthpower}. The main result is derived via essentially the same reasoning as before from Proposition \ref{prop:dichotomy-Z} and \ref{prop:subsystemsXb-Z} below.

\begin{proposition}\label{prop:dichotomy-Z}
Let $\varphi\colon \mathcal{A}\rightarrow \mathcal{A}^{*}$ be an idempotent substitution. Let $y\in X_{\varphi}^{\Z}$ and let 
\[ Y = \{z \in  X_{\varphi}^{\Z}\mid \text{every factor of $z$ appears in $y$}\} 
\]
be the orbit closure of $y$. Then at least  one of the following conditions holds: 
\begin{enumerate}[label=\textup{(}\roman*\textup{)}]
\item\label{prop:dichotomya1-Z} there exists $a\in \mathcal{L}(Y)$ such that $Y = X_a^{\Z}$;
\item\label{prop:dichotomya2-Z} there exist a backwards prolongable letter $a$ and a prolongable letter $c$ such that $ac\in \mathcal{L}(y)$ and $y$ is a shift of ${}^{\omega}\!\varphi(a) \varphi^{\omega}(c)$. 
\end{enumerate}
In particular, the number of subsystems of $ X_{\varphi}^{\Z}$ is finite.
\end{proposition}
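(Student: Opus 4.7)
The proof mirrors that of Proposition \ref{prop:dichotomy}, using Lemma \ref{lem:prefixinv} in place of Lemma \ref{lem:prefix}. Iterating Lemma \ref{lem:prefixinv} from $y^0 = y$, we obtain sequences $y^i \in X_\varphi^\Z$ and shifts $l_i \in [0, |\varphi(y^{i+1}_0)| - 1]$ with $y^i = T^{l_i}(\varphi(y^{i+1}))$. Setting $a_i = y^i_0$ and $r_i = |\varphi(a_{i+1})| - 1 - l_i$, we have $a_i = \varphi(a_{i+1})_{l_i}$, so $a_{i+1} \succcurlyeq a_i$ and the letters $a_i$ eventually lie in a single $\sim$-class. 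The growing hypothesis rules out $l_i = r_i = 0$ for all large $i$, leaving three cases: (1) both $l_i \geq 1$ and $r_i \geq 1$ hold infinitely often; (2) $r_i = 0$ for all large $i$; (3) $l_i = 0$ for all large $i$.

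Case (1) yields \ref{prop:dichotomya1-Z}. Choose an ample letter $a$ with $a_i = a$ for infinitely many $i$. Iterating the desubstitutions, $y = T^{M_n}(\varphi^n(y^n))$ for integers $M_n$ such that $y_0$ sits at position $M_n$ of the initial block $\varphi^n(a_n)$; the case hypothesis forces both $M_n$ and $N_n := |\varphi^n(a_n)| - 1 - M_n$ to tend to infinity. Hence every finite factor of $y$ eventually lies inside a single block $\varphi^n(a)$ (for $n$ with $a_n = a$), giving $y \in X_a^\Z$. Conversely, each such $\varphi^n(a)$ appears in $y$ at positions $[-M_n, N_n]$, and since $a$ is ample, $\varphi^m(a)$ is a factor of $\varphi^n(a)$ for all $n \geq m$; thus $\mathcal{L}(X_a^\Z) \subseteq \mathcal{L}(y)$ and $Y = X_a^\Z$.

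Cases (2) and (3) yield \ref{prop:dichotomya2-Z}. In Case (2), $a_i$ is the final letter of $\varphi(a_{i+1})$, and since $a_i \sim a_{i+1}$ eventually, $a_i = \lambda(a_{i+1})$; idempotency of $\lambda$ forces $a_i$ to be eventually constant with value $a$ backwards prolongable. Iterating $y^i = T^{|\varphi(a)|-1}(\varphi(y^{i+1}))$ shows that $\varphi^n(a)$ ends at position $0$ of $y^{i_0}$ for every $n$, pinning the negative tail to ${}^\omega\varphi(a)$; pigeonhole on the letters $y^{i_0+n}_1$ combined with idempotency of the initial-letter map (part \ref{lem:propofsubs2} of Definition \ref{def:idemsub}) then produces a prolongable letter $c$ with positive tail $\varphi^\omega(c)$. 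Case (3) is symmetric: idempotency of the initial-letter map gives $a_i$ eventually equal to a prolongable $c$ with positive tail $\varphi^\omega(c)$, while the letters $d_n := y^{i_0+n}_{-1}$, being the final letters of $\varphi(d_{n+1})$, satisfy $d_{n+1} \succcurlyeq d_n$ and (once all equivalent) $d_n = \lambda(d_{n+1})$; idempotency of $\lambda$ again produces a backwards prolongable $a$ with negative tail ${}^\omega\varphi(a)$. Using the $\varphi$-invariance of ${}^\omega\varphi(a)\varphi^\omega(c)$ to transfer the conclusion from $y^{i_0}$ back to $y$, we obtain that $y$ is a shift of ${}^\omega\varphi(a)\varphi^\omega(c)$, and in particular $ac \in \mathcal{L}(y)$.

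For the final assertion, the dichotomy limits orbit closures in $X_\varphi^\Z$ to the finitely many possibilities $X_a^\Z$ (for $a \in \mathcal{A}$) and $\overline{\Orb^\Z({}^\omega\varphi(a)\varphi^\omega(c))}$. Since every subsystem is the union of the orbit closures of its points, it is a finite union of these finitely many closed sets, hence closed, so $X_\varphi^\Z$ has only finitely many subsystems. The most delicate point is verifying convergence of the iterated desubstitutions in Cases (2) and (3) to the infinite prolongations ${}^\omega\varphi(a)$ and $\varphi^\omega(c)$; this is exactly where the idempotency properties of $\lambda$ and the initial-letter map enter.
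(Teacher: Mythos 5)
Your proof is correct and follows essentially the same route as the paper's: iterate the two-sided desubstitution lemma, split according to whether the shifts $l_i$ (and the complementary quantities $r_i$) vanish eventually or not, and use idempotency of $\lambda$ and of the initial-letter map to stabilise the boundary letters. The only cosmetic differences are that the paper reduces the case $l_i=|\varphi(c_{i+1})|-1$ to the case $l_i=0$ by replacing $y^i$ with $T(y^i)$ rather than arguing symmetrically, and that you spell out a few details (ampleness of $a$, the deduction of finiteness of the set of subsystems) that the paper leaves implicit.
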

\begin{proof}
Let $y^0=y$. Repeating the reasoning from the proof of Proposition \ref{prop:dichotomy}, we construct letters $c_i$, sequences $y^i \in  X_{\varphi}^{\Z}$ with initial letters  $c_{i}$ and integers $l_i\in [0, |\varphi(c_{i+1})|-1]$ such that 
\[
	y^{i} = T^{l_i} \varphi(y^{i+1})
\]
for each $i \geq 0$. We consider two cases depending on the asymptotic behaviour of $l_i$.

Assume first that $l_i = 0$ for all sufficiently large $i$ and put $a_i=y^{i}_{-1}$. For sufficiently large $i$, $c_i$ is the initial letter of  $\varphi(c_{i+1})$ and $a_i$ is the final letter of $\varphi(a_{i+1})$. Since $\varphi$ is idempotent, it follows from properties \eqref{lem:propofsubs2} and \eqref{lem:propofsubs3} in Definition \ref{def:idemsub} that  $a_i$ and $c_i$ are eventually constant, say $a_i = a$ and $c_i = c$ for $i$ sufficiently large. It follows that $c$ is prolongable, $a$ is backwards prolongable, and $y$ is a shift of ${}^{\omega}\!\varphi(a) \varphi^{\omega}(c)$. Similarly, if we assume that $l_i = |\varphi(c_{i+1})|-1$ for all sufficiently large $i$, then we may apply the same reasoning with $T(y^i)$ in place of $y^i$. 

Now assume that $l_i \neq 0$ and $l_i \neq |\varphi(c_{i+1})|-1$ for infinitely many $i$'s. Let $a$ be a letter that occurs infinitely many times among $c_i$. Then $a\in \mathcal{L}(Y)$ and  we can find arbitrarily large $j$ such that $\varphi^j(a) = y_{[n_j,m_j-1]}$, where $n_j \to -\infty$ and $m_j \to + \infty$ as $j \to \infty$. It follows that $Y = X_a^{\Z}$.

It follows immediately from our claim that $X_{\varphi}^{\Z}$  has only finitely many transitive subsystems, and hence finitely many  subsystems.
\end{proof}

\begin{remark}\label{remark:twosided} Two-sided substitutive systems have been considered by Maloney and Rust, mostly under different assumptions on the substitution, namely that it is recognisable and tame (for the definition, see  \cite[Definition 2.4]{MR}). Note that all growing substitutions are tame, but not all growing substitutions are recognisable. Under these assumptions, the finiteness of the number of subsystems of $X_{\varphi}^{\Z}$ follows from  \cite[Lemma 5.13]{MR}. The authors work with the tiling space $\Omega_{\varphi}$ (see \cite[Section 1.3]{MR}) associated with a substitution $\varphi$ and prove that the number of closed unions of path components of $\Omega_{\varphi}$ is finite. Since there is a bijective correspondence between subsystems of $X_{\varphi}^{\Z}$ and closed unions of path components of $\Omega_{\varphi}$, the claim follows. Finiteness of the number of minimal subsystems of two-sided substitutive systems if the substitution is either aperiodic or growing has also been observed by Bezuglyi--Kwiatkowski--Medynets \cite[Prop.\ 5.6 and Remark 5.7]{BKM}.
\end{remark}

For simplicity we only state the following result for idempotent substitutions, but the more general analogue can readily be derived in the same way as in Proposition \ref{prop:subsystemsXb}.

\begin{proposition}\label{prop:subsystemsXb-Z}
Let $\varphi\colon \mathcal{A}\rightarrow \mathcal{A}^{*}$ be an idempotent substitution. The following conditions are equivalent:
\begin{enumerate}
\item \label{prop:subsystemsXbi-Z} $X_{\varphi}^{\Z}$ is transitive;
\item \label{prop:subsystemsXbii-Z} one of the following conditions holds:
 \begin{enumerate}[label=\textup{(}\alph*\textup{)}]
\item \label{it:54:A} $ X_{\varphi}^{\Z}=X_{b}^{\Z}$ for some letter $b\in \mathcal{A}$ that is either very ample or ample but neither prolongable nor backwards prolongable;
\item \label{it:54:B} $X_{\varphi}^{\Z} = X_a^{\Z} \cup X_c^{\Z} \cup {\Orb}^{\Z}({}^{\omega}\!\varphi(a)\varphi^{\omega}(c))$ for some backwards prolongable letter $a$ and prolongable letter $c$.
\end{enumerate} 
\end{enumerate}
Moreover, if $ X_{\varphi}^{\Z}$ is transitive, then it is substitutive. Furthermore, if $\varphi$ is a substitution of constant length $k$, then $ X_{\varphi}^{\Z}$ is $k$-automatic.
\end{proposition}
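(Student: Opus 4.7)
The argument closely parallels the proof of Proposition \ref{prop:subsystemsXb}, with Proposition \ref{prop:dichotomy-Z} playing the role of Proposition \ref{prop:dichotomy}.

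For (ii) $\Rightarrow$ (i), the goal is to exhibit a biinfinite substitutive sequence with dense orbit in $X_\varphi^{\Z}$. In case (a), I would seek a decomposition $\varphi(b) = vbw$ with both $v$ and $w$ nonempty, using an occurrence of $b$ at a non-endpoint position of $\varphi(b)$. This is possible directly when $b$ is ample and neither prolongable nor backwards prolongable, and also when $b$ is very ample except in the edge case $\varphi(b)=bub$ with $b \notin u$; in this edge case $b$ is both prolongable and backwards prolongable, $\varphi^2(b) = bub\,\varphi(u)\,bub$ has $b$ at interior positions, and working with $\varphi^2$ instead causes no trouble, since the language of $X_b^{\Z}$ is unchanged (each $\varphi^{n}(b)$ is a prefix of $\varphi^{n+1}(b)$). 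With a decomposition $\varphi(b) = vbw$ (or $\varphi^2(b) = vbw$) in hand, the biinfinite sequence
\[
z = \cdots \varphi^2(v)\,\varphi(v)\, v\, b\, w\, \varphi(w)\, \varphi^2(w) \cdots
\]
lies in $X_b^{\Z}$, contains $\varphi^n(b)$ as a factor for every $n$, and is two-sided substitutive by applying Lemma \ref{lem:morvarphicseq} to each one-sided half; the $k$-automatic claim follows from the second part of that lemma. Case (b) is handled directly: the sequence $y = {}^{\omega}\!\varphi(a)\varphi^{\omega}(c)$ is substitutive, and its forward and backward shifts converge to elements of $X_c^{\Z}$ and $X_a^{\Z}$ respectively, so its orbit closure equals $X_a^{\Z} \cup X_c^{\Z} \cup \Orb^{\Z}(y) = X_\varphi^{\Z}$.

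For (i) $\Rightarrow$ (ii), apply Proposition \ref{prop:dichotomy-Z} to any $y \in X_\varphi^{\Z}$ with dense orbit. If $y$ is a shift of ${}^{\omega}\!\varphi(a)\varphi^{\omega}(c)$, the orbit-closure analysis above identifies $X_\varphi^{\Z}$ with the union appearing in \ref{it:54:B}. Otherwise, $X_\varphi^{\Z} = X_b^{\Z}$ for some letter $b \in \mathcal{L}(X_b^{\Z})$, and the key step is to verify that such $b$ must be either very ample or ample but neither prolongable nor backwards prolongable. I would argue by contradiction. If $b$ is not ample, then by idempotency $b$ does not appear in any $\varphi^n(b)$ with $n \geq 1$; hence the only factor of any $\varphi^n(b)$ containing the letter $b$ is $b$ itself, and no biinfinite sequence in $X_b^{\Z}$ can contain $b$, contradicting $b \in \mathcal{L}(X_b^{\Z})$. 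If $b$ is ample and prolongable but not very ample, then $\varphi(b) = bw$ with $b \notin w$, and one shows inductively (again using idempotency, which forces every letter of $w$ to be strictly smaller than $b$ in the preorder $\succcurlyeq$) that $b$ occurs only at position $0$ of each $\varphi^n(b)$; consequently no two-letter factor of the form $z_{i-1}b$ can appear in any $\varphi^n(b)$, contradicting the existence of $z \in X_b^{\Z}$ with $z_i = b$. The case where $b$ is backwards prolongable but not very ample is symmetric.

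\textbf{Main obstacle.} The delicate part is the case analysis in (i) $\Rightarrow$ (ii): verifying that $b \in \mathcal{L}(X_b^{\Z})$ forces the required ampleness conditions. Idempotency is used crucially both to prevent the letter $b$ from reappearing in higher iterates $\varphi^n(b)$ once it is absent from $\varphi(b)$, and to pin down the positions where $b$ can occur in $\varphi^n(b)$ when $b$ is prolongable or backwards prolongable. The edge case $\varphi(b) = bub$ in the constructive direction is a minor bookkeeping issue resolved by replacing $\varphi$ with $\varphi^2$.
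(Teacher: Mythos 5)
Your proposal is correct and follows essentially the same route as the paper: the forward direction constructs the dense biinfinite point $\cdots\varphi(v)vbw\varphi(w)\cdots$ from a decomposition $\varphi(b)=vbw$ (passing to $\varphi^2$ in the edge case $\varphi(b)=bub$, exactly as the paper does), and the converse applies Proposition \ref{prop:dichotomy-Z} to a transitive point. The only cosmetic difference is in organizing the converse: the paper picks a minimal $b$ with $X_\varphi^{\Z}=X_b^{\Z}$ and argues by contrapositive that failure of \ref{it:54:A} forces $b\notin\mathcal{L}(X_b^{\Z})$, whereas you work directly from the conclusion $a\in\mathcal{L}(Y)$ of Proposition \ref{prop:dichotomy-Z} and verify the ampleness conditions by locating the occurrences of $a$ in $\varphi^n(a)$ --- the same argument read in the other direction.
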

\begin{proof}

Suppose \eqref{prop:subsystemsXbii} holds. Consider first the case \eqref{prop:subsystemsXbii-Z}.\ref{it:54:A}. If $\varphi(b)=vbw$ with $v,w \in \mathcal{A}^{*}$ nonempty, then $X_b^{\Z}$ is the orbit closure of the point 
\[ y = \cdots \varphi^2(v) \varphi(v) v b w \varphi(w) \varphi^2(w) \cdots.\] By Lemma  \ref{lem:morvarphicseq}, both one-sided sequences $w \varphi(w) \varphi^2(w) \cdots$ and  $v\varphi(v) \varphi^2(v) \cdots$ are substitutive, and thus $y$ is substitutive as well. Otherwise, by \eqref{prop:subsystemsXbii-Z}.\ref{it:54:A} we have $\varphi(b)=bvb$ for some $v\in \mathcal{A}^*$, and $\varphi^2(b)=bvb\varphi(v)bvb$. Thus, the previous property is satisfied for $\varphi^2$ and the claim follows from the equality $X_{\varphi}^{\Z}=X_{\varphi^2}^{\Z}$, which holds since $\varphi^n(b)$ is a prefix of $\varphi^{n+1}(b)$ for all $n\geq 0$. Finally, in case \eqref{prop:subsystemsXbii-Z}.\ref{it:54:B} the system $X_{\varphi}^{\Z}$ is the orbit closure of $y = {}^{\omega}\!\varphi(a)\varphi^{\omega}(c)$.

It remains to show that \eqref{prop:subsystemsXbi} implies \eqref{prop:subsystemsXbii}. Assume that $X_{\varphi}^{\Z}$ is transitive and let $y$ be a point in $X_{\varphi}^{\Z}$ with a dense orbit. Since $X_{\varphi}^{\Z}=\bigcup_{b \in\mathcal{A}} X_{b}^{\Z}$, there exists $b \in \mathcal{A}$ such that $ X_{\varphi}^{\Z}=  X_b^{\Z}$; pick minimal $b$ with this property. Suppose that $b$ does not satisfy the conditions in \eqref{prop:subsystemsXbii-Z}.\ref{it:54:A}, and so $b$ appears at most once in $\varphi(b)$, either at the initial or final position. Then $b\notin \mathcal{L}(X_b^{\Z})$ and $a \prec b$ for all $a \in \mathcal{A}_b \setminus \{b\}$. In particular, $ X_b^{\Z} \neq X_a^{\Z}$ for all $a \in \mathcal{L}(X_b^{\Z})$, so Proposition \ref{prop:dichotomy-Z} implies that there exists $ac \in \mathcal{L}( X_b^{\Z})$ such that $y$ is up to a shift equal to ${}^{\omega}\!\varphi(a)\varphi^{\omega}(c)$. Hence, case \eqref{prop:subsystemsXbii-Z}.\ref{it:54:B} holds.
\end{proof}
\begin{proof}[Proof of Theorem \ref{thm:main-Z}] The proof is analogous to the proof of Theorem \ref{thm:main}, replacing the use of Propositions \ref{prop:dichotomy} and \ref{prop:subsystemsXb} by Propositions \ref{prop:dichotomy-Z} and \ref{prop:subsystemsXb-Z}.
\end{proof}

\section{Finitary version of Cobham's theorem}\label{sec:cobham}

In this section, we prove a finitary generalisation of Cobham's  classical theorem. We  work again in the context of one-sided sequences.  Recall that integers $k,l \geq 2$ are \textit{multiplicatively independent} if $\log k/ \log l \in {\R}\setminus{\Q}$, i.e.\ $k$ and $l$ are not both integer powers (equivalently, rational powers) of the same integer. Let $k,l \geq 2$ be multiplicatively independent integers. The celebrated theorem of Cobham characterises sequences that are simultaneously $k$-automatic and $l$-automatic: these are precisely the sequences that are ultimately periodic. The following theorem provides a complete characterisation of sets of words that can occur as common factors of automatic sequences defined over multiplicatively independent bases.

\begin{theorem}\label{thm:maincobham}
Let $k,l\geq 2$ be multiplicatively independent integers, let $\mathcal{A}$ be an alphabet, and let $U\subset \mathcal{A}^*$. The following conditions are equivalent:
\begin{enumerate}
\item\label{thm:maincobhami} there exist a $k$-automatic sequence $x$ and an $l$-automatic sequence $y$ such that $U$ is  the set of common factors of $x$ and $y$;
\item\label{thm:maincobhamii}  the set $U$ is a nonempty finite union of sets of the form  $\mathcal{L}(^{\omega}vuw^{\omega})$, where $u,v,w$ are (possibly empty) words over $\mathcal{A}$.
\end{enumerate}
\end{theorem}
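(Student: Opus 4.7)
\textbf{Proof plan for Theorem~\ref{thm:maincobham}.}

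I handle the two implications separately. For the direction (\ref{thm:maincobhamii})$\Rightarrow$(\ref{thm:maincobhami}), my plan is to use a parallel-concatenation-with-separators construction: introduce fresh symbols $\#_x, \#_y \notin \mathcal{A}$ with $\#_x \neq \#_y$ and build ultimately periodic sequences $x \in (\mathcal{A} \cup \{\#_x\})^\omega$ and $y \in (\mathcal{A} \cup \{\#_y\})^\omega$ so that each contains, between $\#$-separators, arbitrarily long blocks of the form $v_i^{M} u_i w_i^{M}$, together with the one-sided analogues $v_i^{M}$ and $u_i w_i^{M}$, for $i = 1,\dots,N$. Ultimate periodicity makes $x$ automatic in every base (and likewise for $y$), while the mismatching separators prevent any common factor from containing a separator, so the common factors over $\mathcal{A}$ reduce exactly to $\bigcup_{i=1}^{N} \mathcal{L}({}^\omega v_i u_i w_i^\omega) = U$.

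The substance of the proof lies in the direction (\ref{thm:maincobhami})$\Rightarrow$(\ref{thm:maincobhamii}). Set $X = \overline{\Orb(x)}$ and $Y = \overline{\Orb(y)}$. The first input is Corollary~\ref{cor:cobpoints}: every $z$ with $\mathcal{L}(z) \subset U$ is ultimately periodic, so every point of $X \cap Y$ is ultimately periodic. By Corollary~\ref{cor:minimalfinite} applied to $X$, there are only finitely many minimal subsystems of $X$; any such subsystem contained in $Y$ is, by Theorem~\ref{mainthmA}, simultaneously $k$- and $l$-automatic, and minimality forces it to be a single periodic orbit $\{p_j^\omega\}$. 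This produces a finite list $p_1^\omega, \dots, p_N^\omega$ which exhausts the minimal subsystems of every subsystem of $X \cap Y$. Next, any transitive subsystem $Z \subset X \cap Y$ is, again by Theorem~\ref{mainthmA}, both $k$- and $l$-automatic, and consists of ultimately periodic points, so it is a finite orbit whose minimal part is some $\{p_j^\omega\}$. Applying Theorem~\ref{mainthmB} to $X$, a generator of $Z$ is a suffix of a biinfinite sequence
\[
\cdots \tau^2(v)\tau(v) v a b w \tau(w) \tau^2(w) \cdots
\]
with $v, w$ in a finite set $W$ and $a, b \in \mathcal{A} \cup \{\epsilon\}$; ultimate periodicity of both tails, combined with the idempotency properties of $\tau$ gathered in Definition~\ref{def:idemsub}, forces $\tau(v) = v^s$ and $\tau(w) = w^r$ for integers $r, s \geq 1$, so the biinfinite sequence collapses to the form ${}^\omega V U' W^\omega$ with $V, W$ in a finite list. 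Varying over the admissible $(v, w, a, b)$ produces finitely many triples $(V_i, U'_i, W_i)$ and hence a finite union $\bigcup_i \mathcal{L}({}^\omega V_i U'_i W_i^\omega) \subset U$.

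The main obstacle is to upgrade this inclusion to an equality: every $w \in U$ must lie in some $\mathcal{L}({}^\omega V_i U'_i W_i^\omega)$. The difficulty is that $U = \mathcal{L}(X) \cap \mathcal{L}(Y)$ can strictly exceed $\mathcal{L}(X \cap Y)$, so a factor common to $x$ and $y$ need not extend to a point of $X \cap Y$. To close the gap I will invoke Theorem~\ref{prop:recurrence}, which controls the positions in a $k$-automatic sequence at which long cyclic powers of a fixed word appear, forcing them into a finite family of structured arithmetic patterns. The plan is that any sufficiently long $w \in U$ must occur in $x$ adjacent to a large power of one of the $p_j$'s (and similarly in $y$), at positions whose patterns in $x$ and $y$ together allow me to realise $w$ as a factor of one of the biinfinite sequences ${}^\omega V_i U'_i W_i^\omega$ identified above. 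Reconciling the constraints coming from $X$ with those coming from $Y$ around each common periodic orbit $p_j^\omega$, and verifying that only finitely many triples are needed, is the technical heart of the argument.
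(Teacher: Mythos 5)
There are two genuine gaps. First, your construction for (\ref{thm:maincobhamii})$\Rightarrow$(\ref{thm:maincobhami}) cannot work as described: an ultimately periodic sequence cannot contain arbitrarily long blocks $v_i^{M}u_iw_i^{M}$. If the separators recur periodically, the gaps between them are bounded, so the blocks have bounded length; if instead the blocks grow with $M$, the sequence is not ultimately periodic and you lose your only argument for automaticity. More fundamentally, when $v_i$ and $w_i$ are non-conjugate primitive words, no ultimately periodic sequence has $\mathcal{L}({}^{\omega}v_iu_iw_i^{\omega})$ among its factors (Fine--Wilf forces the eventual period to be compatible with both $|v_i|$ and $|w_i|$ simultaneously, which fails for large $M$). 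This is precisely why the paper's construction is genuinely $k$-automatic but \emph{not} periodic: the blocks $c^i$ and $d^i$ are planted around the positions $i\cdot k^t$, so their lengths grow geometrically in a way that is compatible with base-$k$ automaticity; the $l$-automatic sequence $y$ uses the positions $i\cdot l^t$ instead, and multiplicative independence is what keeps the two sets of factors from overcounting. You need some such non-periodic placement scheme.

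Second, in the direction (\ref{thm:maincobhami})$\Rightarrow$(\ref{thm:maincobhamii}) you correctly assemble the right ingredients (Corollary \ref{cor:cobpoints}, Corollary \ref{cor:minimalfinite}, Theorem \ref{prop:recurrence} via Corollary \ref{cor:commonfactors}) and you correctly identify that the whole difficulty is that $\mathcal{L}(X)\cap\mathcal{L}(Y)$ may strictly exceed $\mathcal{L}(X\cap Y)$ --- but at exactly that point the proposal stops being a proof and becomes a declaration of intent (``reconciling the constraints \dots is the technical heart''). The paper closes this gap with a specific combinatorial device: every common factor is written canonically as $v_0u_1^{n_1}v_1\cdots u_s^{n_s}v_s$ with the $u_i$ primitive cyclic common factors, $|v_i|\leq\ell$, the rank $s$ minimal and the exponent vector lexicographically maximal. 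Minimality and maximality of the representation produce short words blocking $u_i$-periodicity on both sides of each interior power (Claim 1), whence Corollary \ref{cor:commonfactors} (ultimately an $S$-unit equation) bounds the interior exponents (Claim 2); a compactness argument combined with Corollary \ref{cor:cobpoints} then bounds the rank (Claim 3), reducing every common factor to the shape $v'u^nvw^mv''$ with bounded $v,v',v''$, after which a case analysis yields the finite union of sets $\mathcal{L}({}^{\omega}\tilde u\tilde v\tilde w^{\omega})$. None of this is routine, and your sketch of where Theorem \ref{prop:recurrence} enters (``$w$ must occur adjacent to a large power of some $p_j$'') is not the statement that is actually needed; what must be shown is that a long common factor cannot contain many separated large powers of cyclic words, nor unboundedly many of them. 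As it stands the key step is missing. (A smaller inaccuracy: ultimate periodicity of the tails of the sequence in Theorem \ref{mainthmB} does not by itself force $\tau(v)=v^s$; but that portion of your argument can be bypassed entirely, since a finite transitive subsystem is the orbit closure of an ultimately periodic point and its language is already of the form $\mathcal{L}(pq^{\omega})$.)
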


Theorem \ref{thm:maincobham} immediately implies Cobham's theorem and its strengthening due to Isabelle Fagnot  \cite{Fagnot-1997}, stated below. Unfortunately, this does not give an independent proof of Cobham's theorem since we will use Fagnot's result in the proof.

\begin{theorem}[Fagnot]\label{thm:languagecobham}
Let $k,l\geq 2$ be multiplicatively independent integers. Let $x$ be a $k$-automatic sequence and let $y$ be an $l$-automatic sequence. If $\mathcal{L}(x)=\mathcal{L}(y)$, then both sequences $x$ and $y$ are ultimately periodic.
\end{theorem}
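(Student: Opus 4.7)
The hypothesis $\mathcal{L}(x)=\mathcal{L}(y)$ is equivalent to $\overline{\Orb(x)}=\overline{\Orb(y)}$; denote this common subshift by $Z$. Then $Z$ is simultaneously $k$-automatic (being the orbit closure of $x$) and $l$-automatic (being the orbit closure of $y$). My plan is to show that both $x$ and $y$ are ultimately periodic by exploiting the double substitutive structure of $Z$, with the minimal case handled separately as the genuinely hard step.

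I would first handle the minimal case. Let $M\subseteq Z$ be any minimal subsystem. Applying Theorem~\ref{mainthmA} once to the $k$-automatic structure of $Z$ and once to its $l$-automatic structure yields that $M$ is simultaneously $k$-automatic and $l$-automatic. By Proposition~\ref{prop:minimalsubsystems}, $M$ arises from a primitive substitution of constant length $k$ and from a primitive substitution of constant length $l$; in particular $M$ contains both a purely $k$-automatic generator and a purely $l$-automatic generator, each with language $\mathcal{L}(M)$. At this point one invokes the substitutive generalisation of Cobham's theorem due to Durand \cite{Durand-2011}, combined with standard return-word arguments, to conclude that a doubly primitive substitutive minimal subshift with multiplicatively independent $k$ and $l$ must be a single periodic orbit.

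Next I would reduce the general case to the minimal one using the structural results of Section~\ref{sec:subsystems}. View $Z$ as an $l$-automatic system and write $Z=\pi(X_\varphi)$ for a coding $\pi$ and, after replacing $\varphi$ by a suitable iterate via Lemma~\ref{lem:propofsubs}, an idempotent substitution $\varphi\colon\mathcal{A}\to\mathcal{A}^*$ of constant length a power of $l$. Lift $x$ to a point $\tilde x\in X_\varphi$ and set $\tilde Y=\overline{\Orb(\tilde x)}$. If $\tilde Y\neq X_b$ for every $b\in\mathcal{A}$, then Corollary~\ref{cor:dichotomy} gives that $\tilde x$ is $l$-automatic, and hence so is $x=\pi(\tilde x)$; combined with $k$-automaticity of $x$, the classical theorem of Cobham forces $x$ to be ultimately periodic. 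Otherwise $\tilde Y=X_b$ for some $b$, and one passes to the substitutive subsystem $X_b$ (with the idempotent substitution $\varphi|_{\mathcal{A}_b}$) and iterates. Since $\succ$ is a strict partial order on the finite set $\mathcal{A}$, the descent terminates after finitely many steps: either at some stage the orbit closure is not of the form $X_{b'}$ and Corollary~\ref{cor:dichotomy} applies, or the descent reaches a minimal letter $b_0$ with $\tilde Y=X_{b_0}$, in which case $Z=\pi(X_{b_0})$ is a factor of a minimal system and therefore itself minimal, reducing to the first step. The same reasoning with the roles of $k$ and $l$ interchanged handles $y$.

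The principal obstacle is the minimal case. Theorem~\ref{mainthmA} guarantees that minimal subsystems of $Z$ inherit both substitutive structures, but the two primitive structures are \emph{a priori} independent, and forcing them to coexist on a nondegenerate minimal subshift is precisely the hard content of Fagnot's theorem, requiring genuine input from the theory of substitutions (Cobham--Durand, return words). By contrast, the reduction to the minimal case is a clean corollary of the structural results of this section: Corollary~\ref{cor:dichotomy} transfers $l$-automaticity from the subshift $Z$ to the individual point $x$ whenever $x$ is not trapped in a strictly smaller substitutive subsystem, and the descent is forced to terminate by finiteness of the alphabet.
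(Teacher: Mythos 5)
First, a point of comparison: the paper does not prove this theorem at all --- it is imported verbatim from Fagnot \cite{Fagnot-1997} and used as a black box (via Corollary \ref{cor:cobpoints}) in the proof of Theorem \ref{thm:maincobham}; the authors even remark explicitly that this dependence is why Theorem \ref{thm:maincobham} fails to give an independent proof of Cobham's theorem. Your attempt therefore has to stand entirely on its own, and it has two genuine gaps. The more concrete one is that your descent is vacuous. Suppose $\tilde Y=\overline{\Orb(\tilde x)}=X_b$ for some letter $b$. Passing to the subsystem $X_b$ with the substitution $\varphi|_{\mathcal{A}_b}$ changes nothing: inside the restricted system the orbit closure of $\tilde x$ is still $X_b$, and $X_{\varphi|_{\mathcal{A}_b},\,b}=X_{\varphi,b}$, so the orbit closure is again of the form $X_{b'}$ (with $b'=b$), Corollary \ref{cor:dichotomy} is again inapplicable, and no strictly $\prec$-smaller letter is ever produced --- the strict partial order $\succ$ plays no role because you never exhibit a decreasing chain. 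Consequently the case ``$Z$ transitive but not minimal, with $\tilde Y=X_b$ for a non-minimal letter $b$'' is never handled, and this case is the generic one: take $x$ to be the characteristic sequence of the powers of $k$ (a $k$-automatic, aperiodic sequence whose orbit closure is $X_b$ for the top, non-minimal letter of the defining substitution). For such $x$, Fagnot's theorem asserts that no $l$-automatic sequence $y$ has $\mathcal{L}(y)=\mathcal{L}(x)$, and nothing in your argument addresses this. Indeed, this is exactly where the quantitative combinatorics of occurrences of words $vu^nw$ enters (Fagnot's Proposition 8 and Corollaire 10, mirrored in this paper by Theorem \ref{prop:recurrence} together with the $S$-unit input in Corollary \ref{cor:commonfactors}); note also that Corollary \ref{cor:cobpoints} shows the intended direction of traffic in this paper --- Theorem \ref{mainthmA} reduces statements about points to Fagnot's theorem, not the other way around.

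The second gap is the minimal case, which you correctly flag as the hard step but do not actually close. Durand's theorem \cite{Durand-2011} concerns a \emph{single} sequence that is simultaneously $\alpha$-substitutive and $\beta$-substitutive. Your two structures on $M$ produce two \emph{different} generating points: a $k$-automatic $x'$ and an $l$-automatic $y'$. In an aperiodic minimal system the $k$-automatic points and the $l$-automatic points form two countable subsets of an uncountable set, and there is no a priori reason they intersect, so Durand's hypothesis is never verified for any one point. The return-word machinery you invoke does not bridge this: Durand's results show that derived sequences of a primitive $\alpha$-substitutive sequence are again (primitive) $\alpha$-substitutive, i.e.\ they track the base of the given point, and they provide no mechanism for transferring the base-$l$ structure of the \emph{system} to the base-$k$ \emph{point} $x'$. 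The language-level statement you need --- a minimal subshift generated both by a primitive constant-length-$k$ and a primitive constant-length-$l$ substitution (up to coding), with $k,l$ multiplicatively independent, is a single periodic orbit --- is in substance the minimal case of Fagnot's theorem itself, so ``standard return-word arguments'' names precisely the missing proof rather than supplying it. (That such transfers between bases at the level of languages are delicate is illustrated by Remark \ref{remark:substitutive}, where non-ultimately-periodic substitutive sequences over multiplicatively independent Perron bases satisfy $\mathcal{L}(x)\subset\mathcal{L}(y)$.) Your Case 1 of the reduction --- Corollary \ref{cor:dichotomy} plus classical Cobham when the lifted orbit closure avoids all $X_b$ --- is correct and pleasant, but it covers only one branch of the dichotomy.
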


From the result above and Theorem \ref{thm:main} we get the following corollary.

\begin{corollary}\label{cor:cobpoints}
Let $k,l \geq 2$ be multiplicatively independent integers and let $\mathcal{A}$ be an alphabet. Let  $X, Y \subset \mathcal{A}^{\omega}$ be subsystems such that $X$ is $k$-automatic and  $Y$ is $l$-automatic. If a sequence $z$ belongs to both $X$ and $Y$, then it is ultimately periodic.
\end{corollary}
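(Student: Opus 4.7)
The plan is to pass to the orbit closure of $z$, invoke Theorem \ref{thm:main} twice to recognise this orbit closure as both $k$-automatic and $l$-automatic, and then conclude via Fagnot's theorem.

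Concretely, I would set $Z = \overline{\Orb(z)}$. Since $X$ and $Y$ are closed and shift-invariant and $z \in X \cap Y$, both $\Orb(z) \subset X$ and $\Orb(z) \subset Y$, hence $Z$ is a subsystem of each. By construction $Z$ is transitive, as it is the orbit closure of a single point. Applying Theorem \ref{thm:main} to the inclusions $Z \subset X$ and $Z \subset Y$ shows that $Z$ is simultaneously a $k$-automatic and an $l$-automatic system. Therefore we may choose a $k$-automatic sequence $x$ and an $l$-automatic sequence $y$ with $\overline{\Orb(x)} = Z = \overline{\Orb(y)}$. Since a subshift is determined by its language, this yields $\mathcal{L}(x) = \mathcal{L}(Z) = \mathcal{L}(y)$.

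Now Fagnot's result (Theorem \ref{thm:languagecobham}), applied to $x$ and $y$, gives that both sequences are ultimately periodic. It remains to note that the orbit closure of an ultimately periodic sequence contains only ultimately periodic sequences: writing $x = u v^{\omega}$ with $u, v \in \mathcal{A}^*$ and $v \neq \epsilon$, every element of $\overline{\Orb(x)}$ is either a forward shift of $x$ or a shift of $v^{\omega}$, all of which are ultimately periodic. Since $z \in Z = \overline{\Orb(x)}$, the sequence $z$ is ultimately periodic, as required.

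There is no real obstacle here: the corollary is just the composition of the two main inputs (Theorem \ref{thm:main} and Fagnot's theorem), and the only point that needs a sentence of verification is the closedness of the class of ultimately periodic sequences under passing to the orbit closure of a single ultimately periodic point.
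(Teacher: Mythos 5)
Your proposal is correct and follows essentially the same route as the paper: pass to the orbit closure $Z$ of $z$, apply Theorem \ref{thm:main} twice to produce a $k$-automatic and an $l$-automatic generator of $Z$ with the same language, and conclude via Fagnot's theorem. The only cosmetic difference is that the paper phrases the final step as ``$Z$ is finite,'' while you spell out explicitly why the orbit closure of an ultimately periodic sequence consists only of ultimately periodic points; both observations are equally valid closers.
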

\begin{proof}
Let $Z$ be the orbit closure of $z$. Then $Z$ is a transitive subsystem of both systems $X$ and $Y$. By Theorem \ref{thm:main}, there exist a $k$-automatic sequence $x\in X$ and an $l$-automatic sequence $y\in Y$ such that $Z$ is the orbit closure of $x$ and $y$, and hence the sequences $x$ and $y$ have the same language. By Theorem \ref{thm:languagecobham}, the system $Z$ is finite, and hence the sequence $z$ is ultimately periodic.
\end{proof}

The problem of describing common factors of automatic sequences was considered in \cite{MRSS}. The authors obtained, among other things, an upper bound on the length of a common prefix of aperiodic automatic sequences defined over multiplicatively independent bases in terms of the number of states of the automata generating the sequences. They further asked about the structure of the set of common factors of automatic sequences defined over multiplicatively independent bases.   
Since every ultimately periodic sequence is $k$-automatic for all integers $k\geq 2$, it is clear that we can get common factors of the form $vu^n$ for some words $v$, $u$ and arbitrarily large $n$. The following example shows that  common factors can be somewhat more complicated.

\begin{example} Let $\mathcal{A}=\{0,1,2\}$. Consider the $3$-automatic sequence $x= \varphi^\omega(0)$ produced by the substitution  $\varphi\colon \mathcal{A} \rightarrow \mathcal{A}^*$ given by
\[
\varphi(0)=012,\
\varphi(1)=111,\
\varphi(2)=222
\]
and the $4$-automatic sequence $y= \tau^\omega(0)$ produced by the substitution  $\tau\colon \mathcal{A} \rightarrow \mathcal{A}^*$ given by
\[
\tau(0)=0121,\
\tau(1)=1111,\
\tau(2)=2222.
\] Then \[x=0121^3 2^3 1^9 2^9 1^{27} 2^{27}\cdots \] and hence \[X_{\varphi}=\Orb(x)\cup \{2^n1^{\omega}\mid n\geq 0\}\cup \{1^n2^{\omega}\mid n\geq 0 \}.\] Similarly,  \[y=0121^5 2^4 1^{20} 2^{16} 1^{80} 2^{64}  \cdots \] and hence \[X_{\tau}=\Orb(y)\cup \{2^n1^{\omega}\mid n\geq 0 \}\cup \{1^n2^{\omega}\mid n\geq 0\}.\] The common factors of $x$ and $y$ are exactly the words in $\mathcal{L}({}^{\omega}12^{\omega})\cup \mathcal{L}({}^{\omega}21^{\omega})\cup \mathcal{L}(0121^3)$.
\end{example}

We will use Corollary \ref{cor:cobpoints} to show that  common factors of automatic sequences defined over multiplicatively independent bases are all of the form suggested by the example above. We need to introduce some additional terminology. Let $\mathcal{A}$ be an alphabet and let $x$ be a sequence over $\mathcal{A}$. Let $X$  denote the orbit closure of $x$. We say that a  factor $u$ of $x$  is  \textit{cyclic} if $u$ is nonempty and the periodic sequence $u^{\omega}$ lies in  $X$. We say that $u$ is \textit{primitive} if it is not of the form $u=v^n$ for some $v\in \mathcal{A}^*$ and $n\geq 2$. Since the orbit closure of a periodic sequence is minimal, it follows from Corollary \ref{cor:minimalfinite} that the set of primitive cyclic factors of a substitutive sequence is finite. We say that a common factor  of sequences $x,y\in\mathcal{A}^{\omega}$ is \textit{cyclic} if it is cyclic as a factor of $x$ and as a factor of $y$.

\begin{remark}\label{remark:substitutive} We cannot hope for a straightforward generalisation of Theorem \ref{thm:maincobham} to the class of substitutive sequences. Recall that with every substitution $\varphi\colon\mathcal{A}\rightarrow\mathcal{A}^*$ we can associate a matrix  $M=(m_{ij})_{i,j\in \mathcal{A}}$, where $m_{ij}$ is the number of occurrences of the letter $i$ in the word $\varphi(j)$. By the Frobenius--Perron theorem $M$ always has a dominant eigenvalue $\lambda>0$. The eigenvalue $\lambda$ plays the role of a base for a substitutive sequence $x$ produced by $\varphi$ (for details see, e.g.\ \cite{DR-2011}), which allows us to define the class of $\lambda$-substitutive systems. The reason why Theorem \ref{mainthmC} fails in this setting is that  transitive subsystems of $\lambda$-substitutive systems need not be $\lambda$-substitutive. Consider  the following example. Let $\mathcal{A}=\{0,1\}$ and $\mathcal{B}=\{0,1,2,3\}$, let  $x= \varphi^\omega(0)$ be the sequence produced by the substitution  $\varphi\colon \mathcal{A} \rightarrow \mathcal{A}^*$ given by
\[
\varphi(0)=01,\
\varphi(1)=10,
\]
and let $y= \tau^\omega(2)$ be the sequence produced by the substitution  $\tau\colon \mathcal{B} \rightarrow \mathcal{B}^*$ given by
\[
\tau(0)=01,\
\tau(1)=10,\
\tau(2)=203,\
\tau(3)=3233.
\]
The systems $X=\overline{\mathcal{O}(x)}$  and $Y=\overline{\mathcal{O}(y)}$ are $2$-substitutive and $(2+\sqrt{2})$-substitutive, respectively, and the set of common factors of $x$ and $y$ consists precisely  of the factors of $x$.
\end{remark}

\subsection*{Occurrences of cyclic factors in automatic sequences}

Let $x$ be a $k$-automatic sequence. To proceed with the proof of Theorem \ref{thm:maincobham}, we first need to understand the structure of sets of the form \[S_x=\{n\geq 0\mid vu^nw \textrm{ is a factor of } x\}\] for fixed words $v$, $w$ and $u$. This is only interesting if $u$ is a cyclic factor of $x$, since otherwise $S_x$ is finite. If  $v$ is a suffix of some power of $u$ or $w$ is a prefix of some power of $u$, then the set $S_x$ is easy to determine, and either consists of all integers, or is finite and consists of all integers smaller than some constant (see Remark \ref{rem:effectiveS} below). Assume conversely that $v$ is not a suffix of any power of $u$ and $w$ is not a prefix of any power of $u$. We will show that the set $S_x$  is up to a finite set a finite union of translates of geometric progressions, and deduce that for automatic sequences $x$ and $y$ defined over multiplicatively independent bases the set $S_x\cap S_y$ is finite.

The problem above was also considered by Fagnot in \cite[Proposition 8]{Fagnot-1997} in the special case when the sequence $x$ takes values in $\{0,1\}$ and $v=w=1$, $u=0$. This result was used  to show that if $x$ is a $k$-automatic sequence, $y$ is an $l$-automatic sequence, $k,l\geq 2$ are multiplicatively independent, and $\mathcal{L}(x) \subset \mathcal{L}(y)$, then either $x$ contains only finitely many $1$'s or $1$'s occur in $x$ with bounded gaps (see \cite[Corollaire 10]{Fagnot-1997}). This is the crucial step in her proof of Theorem \ref{thm:languagecobham}.  It would be interesting to see if the general statement below could be reduced to the special case considered by Fagnot, but we found no such reduction. We give a proof of the general result that uses similar ideas as the one of Fagnot but seems quite different in details, and we deduce a much stronger finiteness result in Corollary \ref{cor:commonfactors}. We also discuss the question of effectiveness.

\begin{theorem}\label{prop:recurrence}
Let $k\geq 2$ be an integer. Let $x$ be a $k$-automatic sequence over an alphabet $\mathcal{A}$. Let $u,v,w$ be nonempty words over $\mathcal{A}$. Assume that $v$ is not a suffix of $u^n$ and $w$ is not a prefix of  $u^n$ for any integer $n$.  Let $S=\{n \geq 0\mid vu^nw \textrm{ is a factor of } x\}$. The set $S$ is a finite union of sets of the form $\{ak^{mn}+b\mid n\geq 0\}$ for some $a,b\in\Q$ and $m\in \N$ with $a, b \geq 0$, $a+b\in \Z$ and $(k^m-1)a\in\Z$.
\end{theorem}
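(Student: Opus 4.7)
The plan is to reduce the problem to a finite-state combinatorial analysis via the substitution structure of $x$, and then extract the geometric-progression form of $S$ from a recurrence on the residues of $n$ modulo $k^m$. By a standard lifting argument (together with the fact that $k$- and $k^j$-automaticity coincide), I would reduce to working with a purely $k$-automatic sequence $x = \varphi^\omega(a)$ for a substitution $\varphi\colon \mathcal{B}\to \mathcal{B}^*$ of constant length $k$ and a prolongable letter $a$, expressing $S$ as a finite union of analogous sets obtained by lifting $v$, $u$, $w$ to all admissible preimages under the original coding. Passing to a power of $\varphi$, I may further assume $|\varphi(b)|$ dominates $|v|+|u|+|w|$ for all $b\in\mathcal{B}$.

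Under the hypothesis that $v$ is not a suffix of any $u^j$ and $w$ is not a prefix of any $u^j$, in every occurrence of $vu^nw$ at position $p$ in $x$ the block $u^n$ is the maximal $u$-periodic block at that location. Given such an occurrence with $n$ large, I would desubstitute: since $x = \varphi(x)$, the factor $vu^nw$ sits inside $\varphi(x_{[p',\,q']})$ where $p' = \lfloor p/k \rfloor$ and $q' = \lceil (p+|v|+n|u|+|w|)/k\rceil$. The letters in the interior of $x_{[p',\,q']}$ must be letters $c\in\mathcal{B}$ whose image $\varphi(c)$ is a length-$k$ factor of $u^\omega$; a direct combinatorial argument then shows that these interior letters themselves form a word of the shape $v'(u')^{n'}w'$, with $u'$ a word over $\mathcal{B}$ satisfying that $\varphi(u')$ is a power of a rotation of $u$, the boundary words $v', w'$ drawn from a finite set depending only on $\varphi$, $v$, $u$, $w$, and $n' = (n-C)/k$ for a bounded integer correction $C$ depending on the boundary residues modulo $k$.

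This desubstitution defines a finite-state system on triples of configurations $(v',u',w')$; iterating, the sequence of configurations must eventually cycle. A cycle of length $m$ returning to the original configuration $(v,u,w)$ produces the self-similar relation $n = k^m \tilde n + D$ for an integer $D$ obtained by accumulating the edge corrections weighted by appropriate powers of $k$. The values of $n$ generated by iterating the cycle from a seed $n_0$ form a set $\{a k^{mj} + b : j\geq 0\}$ with $a = n_0 + D/(k^m-1)$ and $b = -D/(k^m-1)$; the integrality conditions $a+b = n_0 \in \Z$ and $(k^m-1)a = (k^m-1)n_0 + D \in \Z$ hold automatically, while non-negativity of $a$ and $b$ follows from orienting the recursion so that each desubstitution step strictly reduces $n$ (forcing $D \leq 0$). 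Taking the union over the finitely many cycles, finitely many valid seeds on each cycle, and a finite exceptional set of small $n$ not reachable by any full cycle yields $S$ in the required form.

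The main obstacle will be the combinatorial analysis inside the desubstitution step: proving that the interior letters of the preimage block indeed spell a word of the form $v'(u')^{n'}w'$, identifying the finite collection of admissible boundary triples along with the possible corrections $C$, and verifying that the resulting recursion is sign-consistent. This becomes delicate when $\gcd(|u|,k) > 1$, because the set of length-$k$ factors of $u^\omega$ has only $|u|/\gcd(|u|,k)$ elements and the rotation class of the $u$-periodic block must be tracked through successive desubstitution steps, introducing a bookkeeping of the phase of $u^n$ relative to $\varphi$-block boundaries.
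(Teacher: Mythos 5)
Your overall strategy coincides with the paper's: desubstitute an occurrence of $vu^nw$, obtain an affine recurrence $n\mapsto (n-C)/k$ on a finite set of boundary configurations, and extract the geometric progressions from the eventual periodicity of that finite-state system. There is, however, a genuine gap at the combinatorial heart of the argument, namely the claim that the interior of the desubstituted block ``spells a word of the shape $v'(u')^{n'}w'$'' for a fixed word $u'$ with $\varphi(u')$ a power of a rotation of $u$. This is false in general: distinct letters $c_1\neq c_2$ of $\mathcal{B}$ may satisfy $\varphi(c_1)=\varphi(c_2)$ (and, one level up, $\pi(c_1)=\pi(c_2)$ for the coding $\pi$), so the preimage of the block $u^n$ is an essentially arbitrary word over the \emph{set} of letters whose images are suitably phased factors of $u^{\omega}$, not a power of any single word. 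For the same reason your very first reduction --- ``expressing $S$ as a finite union of analogous sets obtained by lifting $v,u,w$ to all admissible preimages'' --- already fails, since the middle block does not lift to a power. The quantity that actually recurses is the length of a run over a set $T$ of letters, and the object transported by desubstitution is that set, $T\mapsto\varphi^{-1}(T)$, which must itself be stabilised (the paper passes to a power of $\varphi$ so that $\varphi^{-1}(T)=\varphi^{-2}(T)$); this is exactly what Steps III--IV of the paper's proof formalise with the sets $S(C,D,T)=\{n\mid cwd\in\mathcal{L}(y),\ c\in C,\ d\in D,\ w\in T^*,\ |w|=n\}$.

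The phase bookkeeping you flag at the end as ``the main obstacle'' (tracking the rotation class of $u$ against $\varphi$-block boundaries when $\gcd(|u|,k)>1$) is precisely the issue that the paper's preliminary reductions are designed to eliminate: after padding so that $|u|=|v|=|w|=\ell$ and splitting $S$ according to the starting position of the occurrence modulo $\ell$, one recodes over the alphabet $\mathcal{A}^{\ell}$ and all three words become single letters, so no phase remains to be tracked. As written, your proposal neither performs this reduction nor carries out the bookkeeping, so the step you yourself identify as critical is left open. Two smaller points: the paper replaces your ``eventually cycles'' argument by forcing the boundary maps $\alpha,\omega$ to be idempotent (again by passing to a power of $\varphi$), so the configuration is constant along the whole descent except possibly at the terminal step and no pre-cycle tail has to be absorbed into the affine map; and your sign argument does not follow --- the fact that each desubstitution step decreases $n$ holds automatically for any correction $C\geq 0$ and does not force $D\leq 0$ (indeed the corrections count boundary letters absorbed into $v'$ and $w'$ and are nonnegative, so the paper's closed form has increment $q(c,d)\geq 0$).
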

\begin{proof} We begin the proof with a  few reductions.

{\bf Step I} (reduction to the case when $|u|=|v|=|w|$). First, we show that it is enough to prove the claim under the additional assumption that $|v|\leq |u|$ and $|w|\leq |u|$. Let $j>0$ be an integer such that $\max(|v|,|w|) \leq |u|^{j/2}$. Write 
\[ S_i =\{n\in S \mid n\equiv i \pmod j\}.\]
Then $S=\bigcup_{i=0}^{j-1} S_i$, and hence it is enough to show the claim separately for each of the sets $S_i$. For an integer $i\in [0,j-1]$, we put $u'=u^j$,  $ v'=vu^{\lfloor (i+1)/2\rfloor}$ and $w'=u^{\lfloor i/2\rfloor}w$. We then have $\max(|v'|,|w'|) \leq|u'|$ and 
\[S_i=\{n=jm+i\mid  v'(u')^m w' \textrm{ is a factor of } x\}.\]  
In order to further obtain $|v'|=|w'|=|u'|$, we consider all possible prolongations of the words $v'$ and $w'$ to words $v''$ and $w''$ of length $|u'|$ and such that $v'$ is a suffix of $v''$ and $w'$ is a prefix of $w''$. Every element of $S_i$ lies in one of the sets \[\{n=jm+i\mid  v''(u')^m w'' \textrm{ is a factor of } x\}\] for some choice of $v''$ and $w''$, except for the values $n\in S_i$ corresponding to factors $v'(u')^m w'$ which occur only at starting positions $<|u'|-|v'|$. Since there are only finitely many such values of $m$ (at most one for each starting position), we may assume that $|u|=|v|=|w|$.

{\bf Step II} (reduction to the case when $|u|=|v|=|w|=1$). Write $\ell$ for the common length of $u$, $v$ and $w$. We will now show that we may assume that $\ell=1$ by changing the alphabet. For an integer $i\in[0,\ell-1]$ let
\[\tilde{S}_i=\{n\in S\mid vu^nw=x_{[m,\, m+(n+2)\ell)} \textrm{ for some }  m\equiv i\pmod \ell\}
\]
denote the set of all integers $n\in  S$ such that the factor $vu^nw$ occurs in $x$ at a position $m\equiv i \pmod \ell$. Clearly, $S$ is the union of the sets $\tilde{S}_i$. 

Let $\mathcal{A}^\ell$ denote the set of words of length $\ell$ over $\mathcal{A}$. 
 Identifying words $u,v,w$  with letters $u',v',w'\in \mathcal{A}^\ell$ and the sequence $x$ with the corresponding sequence $x'\in (\mathcal{A}^\ell)^{\omega}$, we see that the set $\tilde{S}_0$ is equal to the set of all integers $n$ such that $v'(u')^nw'$ is a factor of $x'$.  The same reasoning applied to the sequence $T^i(x)$  instead of $x$ shows that the set $\tilde{S}_i$  is equal to the set of all integers $n$ such that $v'(u')^nw'$ is a factor of the $k$-automatic sequence $(T^i(x))'$. This allows us to assume that $u,v,w$ are single letters.
 
 {\bf Step III} (restating the problem in terms of purely automatic sequences). 
 Write the sequence $x$ as the image of a purely $k$-automatic sequence $y$ produced by a substitution $\varphi\colon\mathcal{B}\rightarrow\mathcal{B}^*$ of constant length $k$ under a coding $\pi\colon \mathcal{B}\rightarrow \mathcal{A}$. 
Let $T$, $C$ and $D$ denote the preimages of the letters $u$, $v$ and $w$ under the coding $\pi$, respectively. Note that $C\cap T=D\cap T=\emptyset$. The set $S$ can be expressed in terms of the sequence $y$ as 
\[
S=S(C,D,T)=\{n\geq 0\mid cwd \textrm{ is a factor of } y \textrm{ for some } c\in C, d\in D \textrm{ and } w\in T^* \text{ with } |w|=n \}.
\] We will prove that sets $S=S(C,D,T)$ satisfy the claim for all purely $k$-automatic sequences $y$ over an alphabet $\mathcal{B}$ and subsets $T,C,D\subset \mathcal{B}^*$ with $C\cap T=D\cap T=\emptyset$. Dividing $S$ into a finite union, we may further assume that the sets $C$ and $D$ consist of  single letters $c,d\in\mathcal{B}\setminus T$, and we write $S(c,d,T)$ for $S(\{c\},\{d\},T)$.

{\bf Step IV} (constructing a recurrence for the set $S$). For $m\geq 1$ by abuse of notation we let $\varphi^{-m}(T)$ denote the set $\varphi^{-m}(T^*)\cap \mathcal{A}$, i.e.\ the set of letters $a\in\mathcal{A}$ such that $\varphi^m (a)\in T^*$. We write $\mathcal{A}_T=\mathcal{A}\setminus \varphi^{-1}(T)$ for the set of letters $a\in\mathcal{A}$ such that $\varphi(a)\notin T^*$. For a letter $a\in \mathcal{A}_T$ let $\alpha(a)$  denote the first letter in $\varphi(a)$ that is not in $T$ and let $\omega(a)$ denote the last letter in $\varphi(a)$ that is not in $T$. We replace the substitution $\varphi$ by its power in order to get the property  $\varphi^{-1}(T)=\varphi^{-2}(T)$ (this is possible by Lemma \ref{lem:elementary}); note that this involves replacing $k$ by its power.

With every pair $(a,a')\in \mathcal{A}_T^2$ we associate an integer $q(a,a')$ in the following way. Write $\varphi(a)=v\omega(a)w$ and $\varphi(a')=w'\alpha(a')v'$ for some $w,w'\in T^*$ and $v,v'\in\mathcal{A}^*$. Put $q(a,a')=|w|+|w'|$. Consider the sets  \[\Omega_c=\{ a\in\mathcal{A}_T \mid \omega(a)=c\} \quad \text{and} \quad \mathit{A}_d=\{ a\in\mathcal{A}_T \mid \alpha(a)=d\}.\] Let $E\subset S$ denote the set of integers $n\in S$  such that $n<k-1$. We claim that
\begin{equation}\label{union}
S(c,d,T)=\bigcup_{(c',d')\in \Omega_c\times \mathit{A}_d} \left( kS(c',d',\varphi^{-1}(T))+q(c',d') \right) \cup E.
\end{equation} Let $n\geq k-1$. By definition, $n$ lies in  $S$ if and only if there exists a word $w\in T^*$ with $|w|=n$ such that $cwd$ is a factor of $y$. Since $\varphi(y)=y$ and $|cwd|\geq k+1$, this happens if and only if there exist a pair $(c',d')\in \Omega_c\times \mathit{A}_d$ and a word $w'\in (\varphi^{-1}(T))^*$ such that $cwd$ is a factor of $\varphi(c'w'd')$. It follows that $n\in S$ if and only if $n\in k S(c',d',\varphi^{-1}(T))+q(c',d')$ for some $(c',d')\in \Omega_c\times \mathit{A}_d$, with the `if' claim not requiring the assumption that $n\geq k-1$. This proves $\eqref{union}$.

Observe that \eqref{union} implies that it is enough to prove the claim for each of the sets $S(c',d',\varphi^{-1}(T))$. By the assumption on $\varphi$, we have that  $\varphi^{-1}(\varphi^{-1}(T))=\varphi^{-1}(T)$, and hence it is enough to show the claim for sets of the form $S=S(c,d,T)$ under the additional assumption that $\varphi^{-1}(T)=T$.  It follows that $\alpha(\mathcal{A}_{T}) \subset \mathcal{A}_{T}$ and $\omega(\mathcal{A}_{T}) \subset \mathcal{A}_{T}$. Writing temporarily $\alpha_{\varphi}$ and $\omega_{\varphi}$ for the maps $\alpha, \omega \colon \mathcal{A}_{T} \to \mathcal{A}_T$ defined with respect to the substitution $\varphi$, we note that $\alpha_{\varphi}^n=\alpha_{\varphi^n}$ and $\omega_{\varphi}^n=\omega_{\varphi^n}$ for all $n\geq 1$. Another application of Lemma \ref{lem:elementary} shows that after replacing $\varphi$ by an appropriate  power we get that the maps $\alpha$ and $\omega$ are idempotent, which we henceforth assume.

If the set $S$ is finite, the claim is obvious, so assume that $S$ is infinite. By \eqref{union}, the sets $\Omega_c$ and $\mathit{A}_d$ are nonempty, and hence (since $\omega$ and $\alpha$ are idempotent) we have $\omega(c)=c$, $\alpha(d)=d$. 

We now consider a family of recurrence sequences. For an element $r \in S$, consider the sequence $(n_t^r)_{t\geq 0}$ given by the formula
\begin{align*} n_0^r&=r, \\ n_t^r&= kn_{t-1}^r+q(c,d), \quad t\geq 1.\end{align*}
By \eqref{union} it is clear that $n_t^r \in S$ for all $r\in S$ and $t\geq 0$. We claim that \begin{equation}\label{eqn:formS} S = \{ n_t^r \mid t\geq 0, r\in S, 0\leq r <k^2-1\}.\end{equation}
This will end the proof of the claim, since the recurrence sequence $(n^r_t)_{t\geq 0}$ has the closed form $n_t^r=ak^{t}+b$ with $a=r+q(c,d)/(k-1)$ and $b=-q(c,d)/(k-1)$ satisfying $a+b\in \mathbf{Z}$ and $(k-1)a\in\mathbf{Z}$. (Note that in the process of the proof we have replaced the substitution $\varphi$ by its iterate, which has the effect of replacing the original  $k$ by its power.)

{\bf Step V} (proving the formula \eqref{eqn:formS}). We have already remarked that all $n_t^r$ are elements of $S$. For the converse claim (with the extra statement that one can take $r<k^2-1$), we will inductively apply \eqref{union}, which takes a simpler form since $\varphi^{-1}(T)=T$. 

Choose $m \in S$ and write $c_0=c$, $d_0=d$, $m_0=m$. If $m\geq k-1$, then by \eqref{union} we may write $m_0=km_1 + q(c_1,d_1)$ for some $c_1, d_1 \in \mathcal{A}_T$ with $\omega(c_1)=c$, $\alpha(d_1)=d$ and $m_1 \in S(c_1,d_1,T)$. If $m_1 \geq k-1$, we may repeat this procedure. In this way, we inductively construct sequences $(c_i)_{0\leq i \leq s}$,  $(d_i)_{0\leq i \leq s}$ and  $(m_i)_{0\leq i \leq s}$ with $c_i, d_i \in \mathcal{A}_T$, $\omega(c_{i+1})=c_i$, $\alpha(d_{i+1})=d_i$ and $m_i \in S(c_i,d_i,T)$ with $m_i=km_{i+1} + q(c_{i+1},d_{i+1})$. Furthermore, we have $m_i \geq k-1$ for $i<s$ and $m_i<k-1$ for $i=s$. 

Since the maps $\omega$ and $\alpha$ are idempotent, the conditions on $(c_i)$ and $(d_i)$ imply that $c_i=c$ and $d_i=d$ for $i\in[0,s-1]$ (but not necessarily for $i=s$). This shows that  $m_i=km_{i+1} + q(c,d)$ for $i\in[0,s-1]$, and hence $m=n^r_{t}$ for $t=s-1$ and $r=m_{s-1}$. Since $m_{s-1} = k m_s  +q(c_s,d_s)$, $m_s\leq k-2$ and $q(c_s,d_s)\leq 2k-2$, we get that $r=m_{s-1}<k^2-1$, which ends the proof of the claim.
\end{proof}

\begin{remark}\label{rem:effectiveS} Note that the proof of Theorem \ref{prop:recurrence} is effective, in the sense that given a $k$-automatic sequence and words $u,v,w$ satisfying the conditions of the proposition, one may explicitly determine the set $S=\{n \geq 0\mid vu^nw \textrm{ is a factor of } x\}$ as a finite union of translates of geometric progressions and a finite set. In fact, even if $u, v, w$ fail to satisfy the assumptions, i.e.\ either $v$ is a suffix of some power of $u$ or $w$ is a prefix of some power of $u$, we may still determine $S$ as either an explicit finite set or as all of $\N$. Indeed, the  reasoning in Steps I and II in the proof of  Theorem \ref{prop:recurrence} allows us to assume that $u$, $v$ and $w$ are single letters. Fix $u$ and vary $v$ and $w$. If $v,w\neq u$, we already know how to find the corresponding set $S$. The remaining cases reduce to this one since every factor $vu^n w$ of $x$ is either a subfactor of some factor $\tilde{v}u^m\tilde{w}$ of $x$ for some letters $\tilde{v}, \tilde{w}$ with $\tilde{v},\tilde{w}\neq u$ or else arises as a factor of some prefix of $x$ of the form $u^m w$ or finally we have that $x$ is ultimately periodic with suffix $u^{\omega}$, in which case it is easy to find $S$. \end{remark}

\begin{corollary}\label{cor:commonfactors}  
Let $k,l\geq 2$ be multiplicatively independent integers and let $\mathcal{A}$ be an alphabet. Let $x$ be a $k$-automatic sequence over $\mathcal{A}$ and let $y$ be an $l$-automatic sequence over $\mathcal{A}$. Let $u,v,w$ be nonempty words over $\mathcal{A}$. Assume that $v$ is not a suffix of $u^n$ and $w$ is not a prefix of  $u^n$ for any integer $n$. Then the word $vu^nw$ is a common factor of $x$ and $y$ only  for finitely many $n$.
\end{corollary}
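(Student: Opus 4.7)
The plan is to reduce the statement to an assertion about intersections of geometric progressions via Theorem \ref{prop:recurrence}, and then to dispatch that assertion using Cobham's theorem. Applying Theorem \ref{prop:recurrence} to $x$ and $y$, the sets
\[
S_x = \{n \geq 0 \mid vu^n w \textrm{ is a factor of } x\}, \qquad
S_y = \{n \geq 0 \mid vu^n w \textrm{ is a factor of } y\}
\]
are each finite unions of geometric progressions of the form $P^k_{a,b,m} = \{ak^{mn}+b \mid n\geq 0\}$ (for $S_x$) and $P^l_{a',b',m'} = \{a'l^{m'n}+b' \mid n\geq 0\}$ (for $S_y$). If $S_x \cap S_y$ were infinite, then by pigeonhole some intersection $P^k_{a,b,m}\cap P^l_{a',b',m'}$ would be infinite. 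Hence it suffices to prove that any such intersection is finite.

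For this, I would show that each progression $P^k_{a,b,m}$ is a $k$-automatic subset of $\N$ (i.e., its characteristic sequence is $k$-automatic). With $A=(k^m-1)a\in\Z$ and $B=a+b\in\Z$, the elements are $A(1+k^m+\dots+k^{(n-1)m})+B$ for $n\geq 0$, whose base-$k$ expansions follow an easy eventually-periodic shape; equivalently, one checks by direct computation that this set has a finite $k$-kernel. Analogously, $P^l_{a',b',m'}$ is $l$-automatic. Their intersection is therefore a subset of $\N$ that is both $k$- and $l$-automatic. By Cobham's theorem, its characteristic sequence is ultimately periodic, so the intersection is itself an ultimately periodic subset of $\N$.

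To conclude, I would observe that $P^k_{a,b,m}$ has natural density zero, since it contains at most $O(\log N)$ elements up to $N$; hence $P^k_{a,b,m}\cap P^l_{a',b',m'}$ also has density zero. An ultimately periodic subset of $\N$ with density zero must be finite (otherwise its eventual period would contribute a full arithmetic progression and thus positive density), which yields the desired contradiction. The only nontrivial ingredient beyond Theorem \ref{prop:recurrence} is thus the verification that a geometric progression $P^k_{a,b,m}$ with rational parameters constrained as in Theorem \ref{prop:recurrence} is a $k$-automatic subset of $\N$, and this is the step I expect to demand the most care; the remainder of the argument is a clean invocation of Cobham's theorem combined with the trivial density bound. (Note that invoking Cobham's theorem here is consistent with the authors' stated convention that the present work does not yield an independent proof of Cobham's theorem.)
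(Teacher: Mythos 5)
Your reduction to the finiteness of the intersections $P^k_{a,b,m}\cap P^l_{a',b',m'}$ matches the paper's first step, and your claim that each set $\{ak^{mn}+b \mid n \geq 0\}$ (with the stated integrality constraints) is a $k$-recognizable subset of $\N$ is correct and provable along the lines you sketch. The proof breaks, however, at the sentence ``Their intersection is therefore a subset of $\N$ that is both $k$- and $l$-automatic.'' This is a non sequitur: the class of $k$-automatic subsets of $\N$ is closed under intersection with other \emph{$k$-automatic} sets, but $P^l_{a',b',m'}$ is only $l$-automatic, and (by Cobham's theorem itself, applied to the non-ultimately-periodic set $P^l_{a',b',m'}$) it is \emph{not} $k$-automatic when $k$ and $l$ are multiplicatively independent. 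So there is no reason for $P^k_{a,b,m}\cap P^l_{a',b',m'}$ to be automatic in either base, and Cobham's theorem cannot be invoked on it. Indeed, intersections of a $k$-recognizable set with an $l$-recognizable set can be highly non-recognizable (this is essentially why the structure $(\N,+,V_k,V_l)$ is so expressive), so the inference cannot be repaired by a formal closure property. Your density-zero endgame is fine but never gets a set to which it applies.

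The missing ingredient is genuinely Diophantine rather than automata-theoretic. Writing out when $ak^{mn}+b = a'l^{m'n'}+b'$, one is asking for the solutions of an exponential equation $\alpha k^{N} + \beta l^{M} = \gamma$ with rational coefficients not all zero, and the finiteness of its solution set for multiplicatively independent $k,l$ is exactly what the paper invokes: it follows from Mahler's theorem on $S$-unit equations (or from Baker's method on linear forms in logarithms). This cannot be deduced from Cobham's theorem by the route you propose; only the degenerate case $b=b'$ (where the equation becomes $ak^{N}=a'l^{M}$ and unique factorization suffices) is elementary. To complete your argument, replace the Cobham step by a direct appeal to the $S$-unit theorem applied to each pairwise intersection of progressions.
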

\begin{proof}
 Let $S_x=\{n\in\N\mid vu^nw \textrm{ is a factor of } x\}$ and $S_y=\{n\in\N\mid vu^nw \textrm{ is a factor of } y\}$. By  Theorem \ref{prop:recurrence}, $S_x$ is a finite union of sets of the form $\{ak^{mn}+b\mid n\geq 0\}$ for some $a,b\in\Q$ and $m\geq 0$. Similarly, $S_y$ is a finite union of sets of the form $\{al^{mn}+b\mid n\geq 0\}$ for some $a,b\in\Q$ and $m\geq 0$. In order to prove that the set $S_x\cap S_y$ is finite, it suffices to note that for any multiplicatively independent integers $k,l$ and rational numbers $a,b,c\in \Q$ with $a,b,c$ not all equal to zero  the exponential diophantine equation 
\[ak^n+bl^m=c\]
has only finitely many integer solutions $m,n\in\N$. This follows, e.g.\ from the finiteness of the number of solutions of $S$-unit equations due to Mahler \cite{Mahler1933} (see also \cite[Ch.\ 4]{book:EG} or \cite[p.\ 28]{Lang60} for a more general, but very convenient, statement).
\end{proof}

\subsection*{Proof of the main result}

We are now ready to prove Theorem \ref{thm:maincobham}. We begin with a lemma.

\begin{lemma}\label{lemmaformainCobham} Let $u,\tilde u, v$ be words over an alphabet $\mathcal{A}$, and let $n, m\geq 0$ be integers. Assume that $u$ and $\tilde u$ are primitive and that $\tilde u^m$ is a suffix of $u^{n} v$ with $m|\tilde u| \geq |v|+ |u|+|\tilde u|-\mathrm{gcd}(|u|,|\tilde u|)$. Then $|u|=|\tilde u|$, $u$ and $\tilde u$ are cyclic shifts of each other, and for any $q \geq 0$ 
we have $u^{n} v \tilde u^{q} = u^{n+q}  v$. 
\end{lemma}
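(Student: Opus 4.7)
The natural tool here is the Fine--Wilf theorem: if a word $z$ admits periods $p$ and $q$ with $|z| \geq p + q - \gcd(p,q)$, then $z$ admits the period $\gcd(p,q)$. The plan is to apply this to the overlap between $\tilde u^m$ and the $u^n$-portion of the word $u^n v$. Since $\tilde u^m$ is a suffix of $u^n v$ and $m|\tilde u|$ strictly exceeds $|v|$ (as $|u|+|\tilde u|-\gcd(|u|,|\tilde u|) > 0$), the prefix $w$ of $\tilde u^m$ of length $L := m|\tilde u| - |v|$ coincides with the suffix of $u^n$ of the same length. As a prefix of $\tilde u^m$ it has period $|\tilde u|$, and as a factor of $u^n$ it has period $|u|$; the hypothesis furnishes $|w| \geq |u|+|\tilde u|-\gcd(|u|,|\tilde u|)$, so by Fine--Wilf the word $w$ has period $d := \gcd(|u|,|\tilde u|)$.

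Next I would extract $|u|=|\tilde u|$ from primitivity. Since $|w| \geq |\tilde u|$, its length-$|\tilde u|$ prefix equals $\tilde u$ and has period $d$; as $d$ divides $|\tilde u|$ and $\tilde u$ is primitive, $d = |\tilde u|$. Similarly, the length-$|u|$ prefix of $w$ is a cyclic shift of $u$, hence primitive, so $d=|u|$. Therefore $|u|=|\tilde u|=:k$. Writing $u=u_0u_1\cdots u_{k-1}$ and using that $w$ starts inside $u^n$ at position $(n-m)k+|v|\equiv |v|\pmod k$, direct comparison of characters with the prefix of $\tilde u^m$ gives $\tilde u_j = u_{(|v|+j)\bmod k}$, so $\tilde u$ is the cyclic shift of $u$ by $|v| \bmod k$ positions.

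To pin down $v$ I would look at the portion of $\tilde u^m$ sticking past $u^n$: the last $|v|$ letters of $\tilde u^m$ must equal $v$, and expanding indices together with the cyclic-shift formula just obtained yields $v_i = u_{i \bmod k}$ for all $i\in\{0,\ldots,|v|-1\}$. In other words, $v$ is a prefix of $u^\omega$, and combining this with the shift formula one obtains the compact identity $\tilde u^\omega = T^{|v|}(u^\omega)$.

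Finally I would conclude by interpreting everything inside $u^\omega$. The word $u^n v$ is a prefix of $u^\omega$ of length $nk+|v|$, and the suffix of $u^\omega$ starting at position $nk+|v|$ is $T^{nk+|v|}(u^\omega) = T^{|v|}(u^\omega) = \tilde u^\omega$; hence appending $\tilde u^q$ yields the prefix of $u^\omega$ of length $(n+q)k+|v|$, which is exactly $u^{n+q}v$. The main obstacle is the first step: isolating the right piece $w$ and recognising that both periods act on it with enough room for Fine--Wilf to apply; once that is in place, the remaining derivations are purely combinatorial bookkeeping with indices modulo $k$.
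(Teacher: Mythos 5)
Your proof is correct and follows essentially the same route as the paper: both apply Fine--Wilf to the length-$(m|\tilde u|-|v|)$ overlap between $\tilde u^m$ and $u^n$, use primitivity to force $|u|=|\tilde u|$ and conjugacy, and then obtain the final identity by realising $u^nv\tilde u^q$ and $u^{n+q}v$ inside a single periodic infinite word. The only cosmetic difference is that the paper works with the backwards-infinite sequences ${}^{\omega}u={}^{\omega}\tilde u\tilde v$ and compares suffixes, while you work with $u^{\omega}$ and compare prefixes via explicit indices modulo $|u|$.
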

\begin{proof} Let $\tilde v$ denote the word $\tilde u^m$ with the suffix $v$ removed. It follows from the Fine--Wilf theorem (see, e.g.\ \cite[Thm.\ 1.5.6]{AlloucheShallit-book}) applied to the (backwards infinite) periodic sequences  ${}^{\omega}u$ and ${}^{\omega}\tilde u\tilde v$  that ${}^{\omega}u = {}^{\omega}\tilde u\tilde v$, and hence  ${}^{\omega}uv = {}^{\omega}\tilde u$. Since $u$ and $\tilde u$ are primitive, $|u|=|\tilde u|$ and $u, \tilde u$ are cyclic shifts of each other.  Both the words $u^{n} v \tilde u^{q}$ and $u^{n+q} v$ are suffixes of ${}^{\omega}uv = {}^{\omega}\tilde u$ of the same length, and hence are equal. 
\end{proof}

\begin{proof}[Proof of Theorem \ref{thm:maincobham}]

We first prove that \eqref{thm:maincobhamii} implies \eqref{thm:maincobhami}. Write $U$ in the form \[U=\bigcup_{i=1}^{p} \mathcal{L}( {}^{\omega}v_i u_i w_i^{\omega}), \quad u_i,v_i,w_i \in \mathcal{A}^*, \quad p\geq 1.\]

Replacing $k$ and $l$ with their powers, we may assume that $k,l\geq p+2$. Choose two distinct symbols $\clubsuit$ and $\spadesuit$  not belonging to $\mathcal A$. We will construct a $k$-automatic sequence $x\in(\mathcal{A}\cup\{\clubsuit\})^{\omega}$ whose set of factors not containing $\clubsuit$ coincides with $U$.  Consider the sequences $(c^i_n)$ and the backwards infinite sequences $(d^i_n)$ defined as follows: \[c^i= \begin{cases} u_i w_i^{\omega}& \text{if } w_i\neq \epsilon,\\   u_i \clubsuit^{\omega}& \text{if } w_i = \epsilon;\end{cases}\qquad d^i= \begin{cases} {}^{\omega}v_i& \text{if }v_i\neq \epsilon,\\  {}^{\omega}\clubsuit &\text{if } v_i = \epsilon.\end{cases}\]  Since these sequences are ultimately periodic, they are both $k$- and $l$-automatic. Define the sequence $x\in (\mathcal{A}\cup\{\clubsuit\})^{\omega}$ by the formula 
\[
x_n=
\begin{cases} 
c^i_{n-i\cdot k^t} &\text{if }  i \cdot k^t\leq n < i\cdot k^t+k^{t-1},  t\geq 1, 1\leq i\leq p,\\
d^i_{n-i\cdot k^t} &\text{if }   i\cdot k^t-k^{t-1}\leq n < i\cdot k^t, t\geq 1, 1\leq i\leq p, \\
\clubsuit &\text{otherwise}.
\end{cases}
\]
(Recall that we always regard sequences in  $\mathcal{A}^{\omega}$ as indexed with $0,1,\ldots$, and sequences in ${}^{\omega}\!\mathcal{A}$ as indexed with $\ldots,-2,-1$.)
We claim that the sequence $x$ is $k$-automatic; this can be argued using either the characterisation of automaticity in terms of kernels or in terms of finite automata, and is elementary (albeit tedious). For a more precise argument, let $\varpi$ be the lowest common multiple of the lengths of those $w_i$ and $v_i$ that are nonempty, let $\varpi_0$ and $m_0$ be integers such that $\varpi_0>0$ and $k^{t+\varpi_0} \equiv k^{t} \pmod \varpi$ for $t\geq m_0$, and let $m_1 = \max|u_i|$. Then, for a given $n$ the value $x_n$ depends on the following data: a) the first two digits of $n$ in base $k$ (assuming $n\geq k$); b) the value of $n \bmod \varpi$; c) the value of $\lfloor \log_k n \rfloor \bmod \varpi_0$ (assuming $n\neq 0$); d) whether or not $n<\max(k^{m_0},k{m_1})$, and if so, what is $n$; e) whether or not $n$ is of the form $n=i\cdot k^t + j$ for some integers $i,j,t$ with $1\leq i \leq p$, $t\geq 1$ and $0\leq j<m_1$, and if so, what is the value of $j$ (note that the value of $j$ is uniquely determined for $n\geq k{m_1}$). Each of these pieces of information defines a $k$-automatic sequence, and thus $x$ itself  is $k$-automatic as a function of finitely many $k$-automatic sequences.

It is immediate that the set of factors of $x$ not containing the symbol $\clubsuit$ coincides with $U$. Replacing $k$ by $l$ and $\clubsuit$ by $\spadesuit$, we define an $l$-automatic sequence $y\in (\mathcal{A}\cup\{\spadesuit\})^{\omega}$  whose set of factors not containing $\spadesuit$  coincides with $U$. It follows that the set of common factors of $x$ and $y$ is exactly $U$. This ends the proof that  \eqref{thm:maincobhamii} implies \eqref{thm:maincobhami}. 

For the proof that \eqref{thm:maincobhami} implies \eqref{thm:maincobhamii}, let $x$ be a $k$-automatic sequence and let $y$ be an $l$-automatic sequence.  For simplicity, in the rest of the proof we will refer to common factors of $x$ and $y$  simply as common factors. It follows from Corollary \ref{cor:minimalfinite} that there are only finitely many primitive cyclic common factors (in fact, both $x$ and $y$ have only finitely many primitive cyclic factors). 
Let $\ell$ denote the maximal length of such a factor. 
We write common factors $t$  in the form
\begin{equation}\label{representation}
t=v_0 u_1^{n_1}v_1u_2^{n_2}\cdots v_{s-1}u_s^{n_s}v_{s}
\end{equation}   for some integer $s\geq 0$, integers $n_i \geq 0$ and words $u_i$, $v_i$ satisfying the following properties: \begin{enumerate} \item the words $u_i$ are primitive cyclic common factors, \item the words $v_i$ have length $|v_i|\leq \ell$, \item the integer $s$, called the \emph{rank}, is the smallest possible, \item given the choice of $s$, the sequence of integers $(n_1,\dots, n_s)$, called the \emph{sequence of exponents}, is lexicographically maximal. \end{enumerate} 

The fact that we allow $n_i$ to be zero guarantees the existence of such a representation for $t$, and we will always refer to it as the \emph{representation} of a common factor $t$. 

Note that if $t'$ is a prefix of $t$, then the rank of $t'$ is at most equal to the rank of $t$. We will prove that common factors have bounded rank. To this end, we first prove the following claim.

{\bf Claim 1}: For each $i\in \{2,\dots,s-1\}$ if $n_i > 0$,  then there exist \begin{enumerate} \item\label{thm:mainCobhamclaim1.1} a suffix $z$ of $v_0 u_1^{n_1} \cdots u_{i-1}^{n_{i-1}} v_{i-1}$ of length $|z| \leq 4\ell$ such that $z$ is not a suffix of $u_i^n$ for any integer $n\geq 0$; and \item\label{thm:mainCobhamclaim1.2}  a prefix $z'$ of $v_i u_{i+1}^{n_{i+1}} \cdots u_s^{n_s} v_s$ of length $|z'|\leq 4\ell$ such that $z'$ is not a prefix of $u_i^n$ for any integer $n\geq 0$.\end{enumerate}

\emph{Proof of Claim 1}: We only prove \eqref{thm:mainCobhamclaim1.1}, the proof of \eqref{thm:mainCobhamclaim1.2} being analogous. Write 
\[v_0 u_1^{n_1} \cdots u_{i-1}^{n_{i-1}} v_{i-1} = w' w,\]  
where $w,w' \in \mathcal{A}^*$, $w$ is a suffix of $u_i^n $ for some $n \geq 0$, and $w$ is chosen as long as possible. Let $m\geq 0$ be the largest integer such that $u_i^m$ is a suffix of $w$, and write $w=w'' u_i^m$. Note that $|w''|<\ell$.

Consider the following cases (which cover all possibilities): \begin{enumerate} \item If $w'=\epsilon$, then $t$ admits the representation $t=w'' u_i^{m+n_i} v_i  \cdots v_{s-1} u_s^{n_s}v_s$, which is of rank $s-i+1 <s$. This is a contradiction. 
\item If $|u_i^m| > |u_{i-1}^{n_{i-1}} v_{i-1}|$, we claim that $t$ also admits a representation of smaller rank. In fact, the word $w' w''$ is a prefix of $v_0 u_1^{n_1} \cdots u_{i-2}^{n_{i-2}} v_{i-2}$, and hence has a representation of rank at most $i -2$. Concatenating it with $u_i^{m+n_i}v_i\cdots v_{s-1}u_s^{n_s}v_{s}$, we obtain  a representation of $t$ of rank $\leq s-1$. This is a contradiction.
 \item If $|u_i^m| \leq |u_{i-1}^{n_{i-1}} v_{i-1}|$ and $|w| \geq 4\ell$, then $|u_i^m| > 3\ell$, and hence by  Lemma \ref{lemmaformainCobham} we may write $ u_{i-1}^{n_{i-1}} v_{i-1} u_i^{n_i} = u_{i-1}^r v_{i-1}$ with $r=n_{i-1}+n_i$. Replacing in the representation of $t$ the word $ u_{i-1}^{n_{i-1}} v_{i-1} u_i^{n_i} v_i $ by $u_{i-1}^r v_{i-1} u_i^{0}v_i$, we obtain a representation whose sequence of exponents is $(n_1, \dots, n_{i-2}, r, 0, n_{i+1}, \dots, n_s)$, and hence (since $n_i >0$) is lexicographically larger than  $(n_1, \dots, n_{i-2}, n_{i-1}, n_i,  n_{i+1}, \dots, n_s)$. This is a contradiction.
\item If $|w| <4\ell$ and $w' \neq \epsilon$, then  the suffix $z$ of $w'w$ of length $|w|+1$ satisfies the claim.\qed
\end{enumerate}

{\bf Claim 2}: There is a constant $C$ such that for any common factor $t$ the values of $n_2, \dots, n_{s-1}$ in any representation \eqref{representation} of $t$ are bounded  by $C$. 

\emph{Proof of Claim 2}:  Since there are only finitely many primitive cyclic common factors and finitely many words of length $\leq 4\ell$,  Corollary \ref{cor:commonfactors} and Claim 1 show that the values of $n_2, \dots, n_{s-1}$ are bounded by a constant independent of $t$. \qed

{\bf Claim 3}: The rank of common factors is bounded. 

\emph{Proof of Claim 3}:  Let $t$ be a common factor with representation  $t=v_0 u_1^{n_1}v_1u_2^{n_2}\cdots v_{s-1}u_s^{n_s}v_{s}$. We claim that any cyclic factor of $t$ (not necessarily primitive) can occur at positions intersecting at most three of the $u_i$'s.  Suppose this is not the case and write such a factor in the form $\tilde{u}^n$ for some integer $n\geq 0$ and primitive cyclic common factor $\tilde{u}$. Consider in the representation of $t$ the shortest factor $\tilde{w}$ consisting of a concatenation of $u_i$'s and $v_i$'s and containing $\tilde{u}^n$.  Replace the part of the representation of $t$ that is equal to $\tilde{w}$ by an expression of the form $v' \tilde{u}^n v''$ with $|v'|, |v''| \leq \ell$; if the words $v'$ and $v''$ are adjacent to some $v_i$, regard them as separated by $\tilde{u}^0=\epsilon$. In this manner, we obtain a representation of smaller rank; a contradiction. 

Now suppose that the rank of common factors is unbounded. Then the words 
\[w(t):=u_2^{n_2}v_2\cdots v_{s-2}u_{s-1}^{n_{s-1}}\]
can be arbitrarily long, and hence by compactness of $\mathcal{A}^{\omega}$ there exists a sequence $z$ with arbitrarily long prefixes of the form $w(t)$ for some common factors $t$. Let $X$ and $Y$ denote the orbit closures of $x$ and $y$, respectively. Then $z\in X \cap Y$, and hence by Corollary \ref{cor:cobpoints} it is ultimately periodic. However,  any cyclic factor of $w(t)$ occurs at positions  intersecting at most three of the $u_i$'s, and hence by Claim 2 it has length at most $(3C+4) \ell$, which is a contradiction. (In fact, a more careful reasoning gives the bound $(3C+2)\ell -2$.) \qed

{\bf Claim 4}: There exists a constant $C'$ such that any common factor $t$ can be written in the form $t=v' u^n v w^m v''$ for integers $n, m \geq 0$, primitive cyclic common factors $u,w$ and words $v, v', v''$ of length at most $C'$.

\emph{Proof of Claim 4}:  By Claim 3, the rank $s$ of a common factor $t=v_0 u_1^{n_1}v_1u_2^{n_2}\cdots v_{s-1}u_s^{n_s}v_{s}$ is bounded, and by Claim 2, $n_i\leq C$ for $i=2,\dots, s-1$. Since for all $i$, $|v_i|\leq \ell$ and $|u_i|\leq \ell$, the claim is satisfied with $v'=v_0$, $v''=v_s$, $u=u_1$, $w=u_s$, and $v=v_1u_2^{n_2}\cdots u_{s-1}^{n_{s-1}}v_{s-1}$.\qed

Due to Claim 4, in order to prove our result, it is sufficient to study common factors $t$ of the form $t=v' u^n v w^m v''$ for fixed words $u, w, v, v', v''$. Call a set $S$ of common factors special if it takes  the form \begin{align*} S&= \{t=v' u^n v w^m v''  \mid t \text{ is a common factor}, n,m \in \N\}\end{align*} for some words $u,w,v,v',v'' \in \mathcal{A}^*$. If we may further take $w=v''=\epsilon$, we call the set $S$ degenerate. By Claim 4, the set of all common factors is a finite union of special sets. We write $\mathcal{L}(S)$ for the set of all factors of words in $S$. We will prove that  for any special set $S$ the set $\mathcal{L}(S)$ is a finite union of sets of the form $\mathcal{L}({}^{\omega}\tilde{u} \tilde{v} \tilde{w}^{\omega})$ with $\tilde{u}, \tilde{v}, \tilde{w} \in \mathcal{A}^*$. This will conclude the proof of the theorem.

We first prove the claim for a degenerate special set \[S= \{t=v' u^n v  \mid t \text{ is a common factor}, n\in \N\}.\] If the set $S$ is finite, then it is certainly of the desired form. By Corollary \ref{cor:commonfactors} this is the case if  $v'$ is not a suffix of any power of $u$ and $v$ is not a prefix of any power of $u$. If on the other hand $v'$ is a suffix of some power of $u$, and $S$ is infinite, then $\mathcal{L}(S)$ is equal to $\mathcal{L}({}^{\omega}u v)$. A similar reasoning proves the claim if  $v$ is a prefix of some power of $u$.

Consider now the case of a general special set \[S= \{t=v' u^n v w^m v'' \mid t \text{ is a common factor}, n,m \in \N\}.\] If $S$ does not contain factors of the form $t=v' u^n v w^m v''$ for arbitrarily large values of both $n$ and $m$, then $S$ can be rewritten as a finite union of degenerate special sets, and the claim follows. Suppose that $S$ contains factors $t$ corresponding to arbitrarily large values of both $n$ and $m$. If either $v w^m$ is a prefix of some power of $u$ for arbitrarily large $m$ or $u^n v$ is a suffix of some power of $w$ for arbitrarily large $n$, then we may again rewrite $S$ as a finite union of degenerate special sets. Finally, if neither is $v w^m$ a prefix of some power of $u$ for sufficiently large $m$ nor is $u^n v$  a suffix of some power of $w$ for sufficiently large $n$, then we  conclude from Corollary  \ref{cor:commonfactors} that $v'$ is a suffix of some power of $u$ and $v''$ is a prefix of some power of $w$. In this case the set $\mathcal{L}(S)$ is equal to $\mathcal{L}({}^{\omega}u v w^{\omega})$, which finishes the proof.
\end{proof}

It is an interesting question whether Theorem \ref{thm:maincobham} can be made effective.

\begin{question} Is there an algorithm which, given a $k$-automatic sequence $x$ and an $l$-automatic sequence $y$, produces words $u_i$, $v_i$, $w_i$, $1\leq i \leq p$, such that the set $U$ of common factors of $x$ and $y$ is equal to $U=\bigcup_{i=1}^p \mathcal{L}({}^{\omega}v_i u_i w_i^{\omega})$?
\end{question}

\begin{remark} The only place in the proof of Theorem \ref{thm:maincobham} where it is not clear if the proof is effective is the bound on the rank of common factors (Claim 3), which uses a compactness argument. Let us briefly comment on how to make other parts of the proof effective. 

First of all, we can  determine all primitive cyclic factors of an automatic sequence $x$. In fact, write $x$ as the image of a fixed point of a substitution under a coding. Replace the substitution by an idempotent one using Lemma \ref{lem:propofsubs}, which is effective. Proposition \ref{prop:minimalsubsystems} together with Lemma \ref{lem:factor} describe all minimal subsystems of $\overline{\Orb(x)}$ as closures of orbits of explicitly given automatic sequences. To find the cyclic factors of $x$, we need to determine which of these automatic sequences are periodic, for which a decision procedure was given by Honkala \cite{Honkala} (see also \cite{ARS09} for a simpler approach).
Another crucial  ingredient of the proof is  Corollary \ref{cor:commonfactors}, which uses the $S$-unit equation. Here, solutions can be effectively bounded using Baker's method (for a comprehensive discussion, see \cite{book:EG}). In particular, the constant $C$ in the proof can be effectively computed. Finally, given words $v, v', v'', u, w$ we may effectively determine all common factors of $x$ and $y$ of the form $t=v' u^n v w^m v''$ using Remark  \ref{rem:effectiveS} and an effective version of  Corollary \ref{cor:commonfactors}. Thus, in order to make the proof fully effective, we need to find a computable bound on the rank of common factors or---equivalently---the constant $C'$. Unfortunately, we do not know how to do this.
\end{remark}

\bibliographystyle{amsplain}
\bibliography{bibliography}
\end{document}